\begin{document}
	\newtheorem{theoreme}{Theorem}
	\newtheorem{lemma}{Lemma}[section]
	\newtheorem{proposition}[lemma]{Proposition}
	\newtheorem{corollary}[lemma]{Corollary}
	\newtheorem{definition}[lemma]{Definition}
	\newtheorem{conjecture}[lemma]{Conjecture}
	\newtheorem{remark}[lemma]{Remark}
	\newtheorem{exe}{Exercise}
	\newtheorem{theorem}[lemma]{Theorem}
	\theoremstyle{definition}
	\numberwithin{equation}{section}
	\newcommand{\ud}{\mathrm{d}}
	\newcommand{\ess}{\rm ess}
	\newcommand{\trans}{{\rm trans}}
	\newcommand{\R}{\mathbb R}
	\newcommand{\TT}{\mathbb T}
	\newcommand{\Z}{\mathbb Z}
	\newcommand{\N}{\mathbb N}
	\newcommand{\Q}{\mathbb Q}
	\newcommand{\Var}{\operatorname{Var}}
	\newcommand{\br}{{\rm BL}}
	\newcommand{\tr}{\operatorname{tr}}
	\newcommand{\supp}{\operatorname{Supp}}
	\newcommand{\intinf}{\int_{-\infty}^\infty}
	\newcommand{\me}{\mathrm{e}}
	\newcommand{\blue}[1]{{\color{blue}#1}}
	\newcommand{\red}[1]{{\color{red}#1}}
	\newcommand{\mi}{\mathrm{i}}
	\newcommand{\beq}{\begin{equation}}
		\newcommand{\eeq}{\end{equation}}
	\newcommand{\beqq}{\begin{equation*}}
		\newcommand{\eeqq}{\end{equation*}}
	\newcommand{\ben}{\begin{eqnarray}}
		\newcommand{\een}{\end{eqnarray}}
	\newcommand{\beno}{\begin{eqnarray*}}
		\newcommand{\eeno}{\end{eqnarray*}}
	\def \a{\rm A}
	\def \f{\frac}
	\def\a{\alpha}
	\def\e{\varepsilon}
	\def\ld{\lambda}
	\def\p{\partial}
	\def\v{\varphi}
	\newcommand{\D}{\Delta}
	\newcommand{\Ld}{\Lambda}
	\newcommand{\n}{\nabla}
	\newcommand{\ta}{\rm tang}
	\newcommand{\tra}{\rm trans}
	\newcommand{\an}{\rm Ang}
	\newcommand{\GG}{\text{g}}
	\newcommand{\tn}{\tilde{\nu}}
	\newcommand{\tw}{\tilde{w}}
	\newcommand{\tl}{\tilde{\ell}}

	\title[Improved local smoothing estimates for the wave equation]
	{Improved local smoothing estimate for the wave equation in higher dimensions}

	\subjclass[2010]{Primary:35S30; Secondary: 35L15}
	
	\keywords{Local smoothing; wave equation; $k$-broad ``norm"}

	\begin{abstract}
		In this paper, we establish the sharp $k$-broad estimate for a class of  phase functions satisfying the homogeneous convex conditions. As an application, we obtain improved local smoothing estimates for the half-wave operator in dimensions $n\ge3$. As a byproduct, we also generalize the restriction estimates of Ou--Wang \cite{OW} to a broader class of phase functions.
	\end{abstract}
	\author{Chuanwei Gao}
	\address{School of Mathematical Sciences, Capital Normal University, Beijing 100048, China}
	\email{cwgao@cnu.edu.cn}

	\author{Bochen Liu}
	\address{Department of Mathematics \& International Center for Mathematics, Southern University of Science and Technology, Shenzhen 518055, China}
	\email{Bochen.Liu1989@gmail.com}
	
	\author{Changxing Miao}
	\address{Institute for Applied Physics and Computational Mathematics, Beijing, China}
	\email{miao\textunderscore changxing@iapcm.ac.cn}
	\author{Yakun Xi}
	\address{School of Mathematical Sciences, Zhejiang University, Hangzhou 310027, PR China}
	\email{yakunxi@zju.edu.cn}
	
	\maketitle

	\section{introduction}\label{section-1}

	Let $u$ be the solution to the Cauchy problem
	\begin{equation}\label{eq-01}
		\begin{cases}
			(\partial_{tt}-\Delta)u=0,\;(t,x)\in \R\times \R^n,\\
			u(0,x)=f,\ \partial_tu(0,x)=0
		\end{cases}
	\end{equation}
	where $f$ is a Schwartz function. $u$ can be expressed in terms of the half-wave operator
	$e^{it\sqrt{-\Delta}}$ as
	\begin{equation*}
		u(x,t)=\frac12\Big(e^{it\sqrt{-\Delta}}f+e^{-it\sqrt{-\Delta}}f\Big).
	\end{equation*}
	This paper is concerned with the  $L^p$-regularity estimate of the solution $u$. For fixed time $t$,  the classical sharp estimate of Peral \cite{Peral} and Miyachi \cite{Miya} reads:
	\beq \label{eq-02}
	\|e^{it\sqrt{-\Delta}}f\|_{L^p(\R^n)}\leq C_{t,p}\|f\|_{L^p_{s_p}},\;\;\;\; s_p:=(n-1)\Big|\frac{1}{2}-\frac{1}{p}\Big|, \;\; 1<p<\infty.
	\eeq
	This estimate trivially leads to the following space-time estimate
	\begin{equation}\label{eq-03}
		\Big(\int_1^2 \| e^{it\sqrt{-\Delta}}f\|_{L^p(\R^{n})}^p \,\ud t \Big)^{1/p} \lesssim \| f \|_{L^p_{s_p}(\R^n)}.
	\end{equation}
	One natural question then arises: can one do better than \eqref{eq-03}?  More precisely, does there exist some $\varepsilon >0$ such that \eqref{eq-03} holds with $s_p-\varepsilon$ in place of $s_p$? The following  local smoothing conjecture was formulated by Sogge \cite{Sogge91}.
	
	\begin{conjecture}[Local smoothing conjecture]\label{LS conj euclidean}
		For $n \geq 2$, the inequality
		\begin{equation}\label{eq-05}
			\Big( \int_1^2 \|e^{i t \sqrt{-\Delta}} f \|^p_{L^p(\mathbb{R}^n )} \ud t \Big)^{\frac1p} \lesssim \|f\|_{L^p_{s_p-\sigma}(\R^n)}
		\end{equation}
		holds for all
		\begin{equation}\label{eq-06}\sigma<\left\{\begin{aligned}
				& \tfrac1p, \;\;\;\text{\rm if}\;\; \tfrac{2n}{n-1} < p < \infty;\\
				&s_p,\;\; \text{\rm if }\;\;\;2 < p \leq \tfrac{2n}{n-1}.\end{aligned}\right.
		\end{equation}
	\end{conjecture}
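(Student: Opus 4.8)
The plan is to reduce Conjecture~\ref{LS conj euclidean} to a frequency-localized cone estimate and then run a broad--narrow induction on scales. Applying a Littlewood--Paley decomposition $f=\sum_{\lambda}f_\lambda$ in the dyadic frequency variable $|\xi|\sim\lambda$, Minkowski's inequality together with the almost orthogonality of the pieces $e^{it\sqrt{-\Delta}}f_\lambda$ reduces \eqref{eq-05} to proving, for every $\e>0$ and uniformly in $\lambda\ge1$,
\begin{equation}\label{eq-propose-1}
\Big(\int_1^2\big\|e^{it\sqrt{-\Delta}}f_\lambda\big\|_{L^p(\R^n)}^p\,\ud t\Big)^{1/p}\lesssim_\e \lambda^{s_p-\sigma+\e}\,\|f_\lambda\|_{L^p(\R^n)},
\end{equation}
with $\sigma$ as in \eqref{eq-06}. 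Rescaling $(t,x)\mapsto(\lambda t,\lambda x)$ turns the left-hand side into an extension operator for the light cone at unit frequency, integrated over a time interval of length $\sim\lambda$; thus \eqref{eq-propose-1} is equivalent to a local smoothing estimate at spatial scale $R\sim\lambda$ that one attacks by parabolic rescaling and induction on $R$.

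I would then split the range of $p$. For $p\ge\frac{2(n+1)}{n-1}$, the $\ell^2$-decoupling theorem of Bourgain--Demeter for the light cone, applied to the $\lambda^{-1/2}$-plate decomposition of $\widehat{f_\lambda}$ and combined with the trivial single-plate bound and orthogonality, already yields \eqref{eq-propose-1} with the full conjectured exponent $\sigma<\tfrac1p$. The substantive range is $2<p<\frac{2(n+1)}{n-1}$, where I would run the Bourgain--Guth broad--narrow decomposition at an auxiliary scale $K$: write $|e^{it\sqrt{-\Delta}}f_\lambda|^p$, up to acceptable errors, as a $K$-broad part plus a sum of narrow contributions localized to $K^{-1}$-caps of the cone. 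The broad part is controlled by the sharp $k$-broad ``norm'' inequality for phases obeying the homogeneous convexity hypotheses --- precisely the main estimate established in this paper, which itself rests on the Bennett--Carbery--Tao multilinear Kakeya inequality --- and optimizing the multilinearity parameter $k$ against $p$ recovers the conjectured exponent on the broad side down to $p=\frac{2n}{n-1}$. The narrow part is treated by rescaling each $K^{-1}$-cap to unit frequency, invoking the inductive hypothesis at the smaller scale $\lambda/K^{2}$, and then re-summing over caps by $(n-1)$-dimensional (lower-dimensional cone/parabola) decoupling, which upgrades the $\ell^p$-sum over caps to the $L^p$-norm with no loss beyond $K^{O(\e)}$.

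The step I expect to be the principal obstacle is closing the narrow case so that the induction actually attains the endpoint $\sigma=s_p$ at $p=\frac{2n}{n-1}$ in every dimension $n\ge3$. When $n=2$ this was carried out by Guth--Wang--Zhang through polynomial partitioning together with a sharp analysis of the incidences between $\lambda^{1/2}$-tubes tangent to low-degree algebraic varieties, an argument that exploits the planar fact that the tubes through a point sweep out a one-parameter family. In dimensions $n\ge3$ the narrow contribution concentrated near an $m$-dimensional variety must instead be handled by a genuinely higher-dimensional incidence/Kakeya estimate for $\lambda^{1/2}$-slabs tangent to such varieties --- equivalently, a sharp higher-dimensional analogue of Wolff's circular maximal bound at the critical integrability --- and the inputs currently available for this tangential regime are lossy. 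The $k$-broad estimate proved here is sharp, but its linearization through the broad--narrow scheme leaves a residual power of $\lambda$ that only a sufficiently strong narrow estimate can absorb; supplying that tangential incidence estimate in higher dimensions is the crux on which the full conjecture for $n\ge3$ turns, and it is exactly here that the present paper obtains a quantitative but non-endpoint improvement rather than the conjectured exponent.
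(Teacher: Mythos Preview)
The statement you are attempting to prove is Conjecture~\ref{LS conj euclidean}, which the paper states as an \emph{open problem} and does not prove. For $n\ge3$ it remains open; the paper establishes only the partial result Theorem~\ref{theoa2}. There is therefore no proof in the paper to compare your proposal against, and your proposal --- which you yourself describe as blocked at the narrow/tangential step --- is not a proof of the conjecture either. What you have written is a strategy sketch that correctly identifies where the obstruction lies, not a proof.

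To the extent that your outline overlaps with the paper's approach to its \emph{partial} result, a few points are inaccurate. The $k$-broad estimate (Theorem~\ref{Theorem-5}) is proved here by polynomial partitioning in the style of Guth and Ou--Wang, not via the Bennett--Carbery--Tao multilinear inequality as you state; BCT is mentioned only as historical motivation for the $k$-broad notion. The Guth--Wang--Zhang resolution of the $n=2$ case goes through a sharp \emph{square function} inequality, not a broad--narrow iteration with polynomial partitioning as you describe. Most importantly, optimizing $k$ in the broad--narrow scheme does \emph{not} recover the conjectured exponent down to $p=\tfrac{2n}{n-1}$: the range \eqref{eq:maina} obtained in the paper lies strictly above $\tfrac{2n}{n-1}$, precisely because the narrow decoupling step (Theorem~\ref{narrow}) costs a power of $K$ that the induction cannot fully absorb at the conjectured endpoint. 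Your final paragraph acknowledges this, which is to say your proposal is a correct diagnosis of why the conjecture is hard rather than a proof of it.
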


	In the same paper, Sogge \cite{Sogge91} obtained the first partial results on the above conjecture for all $p>2$ when $n=2$, which were greatly simplified and further improved in his joint work with Mockenhaupt and Seeger \cite{MSS}, where the {\it square function} approach was introduced.
	In 2000, Wolff \cite{Wolff00} proved Conjecture \ref{LS conj euclidean} for the case $n=2$ and $p>74$ by introducing what is now known as the {\it decoupling inequality} for the cone. Following Wolff, decoupling inequalities have been studied by many authors \cite{GaSchSe,GaSe09, LaWo02}. In 2015,  Bourgain--Demeter  \cite{BoDe2015} established the full range of sharp $\ell^2$-decoupling inequalities in all dimensions, of which the influence permeates into number theory,  PDEs and geometric measure theory. As a direct consequence, 
	Bourgain and Demeter obtained the sharp local smoothing estimate for all  $n\geq 2$ and $p\geq \tfrac{2(n+1)}{n-1}$.
	Recently, Guth-Wang-Zhang \cite{GWZ} resolved  the local smoothing conjecture for  $n=2$ by establishing the full range sharp square function inequality. The purpose of this paper is to further improve the local smoothing result for dimensions $n\geq 3$ and $2<p< \frac{2(n+1)}{n-1}$. In particular, we obtain
	\begin{theorem}\label{theoa2}
		Let $n\ge3$ and
		\begin{equation}\label{eq:maina}p>\left\{\begin{aligned}
				&2\tfrac{3n+5}{3n+1}\quad \text{\rm for $n$ odd},\\
				&2\tfrac{3n+6}{3n+2}\quad \text{\rm for $n$ even}.
			\end{aligned}\right.\end{equation}
		Then
		\beq \label{eq-07}
		\big\|e^{it\sqrt{-\Delta}}f\big\|_{L^p(\R^n\times [1,2])}\leq C\|f\|_{L^p_{s_p-\sigma}}, \;\text{for all}\;\sigma<\tfrac2p-\tfrac12.
		\eeq
	\end{theorem}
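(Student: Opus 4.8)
The plan is to deduce Theorem~\ref{theoa2} from the sharp $k$-broad estimate for oscillatory integral operators associated to homogeneous convex phases announced in the abstract, so the work divides into a reduction, the $k$-broad estimate itself, and a broad--narrow passage back to a linear bound. For the reduction, perform a Littlewood--Paley decomposition $f=\sum_\lambda f_\lambda$; since $x$-frequency localization is preserved by $e^{it\sqrt{-\D}}$, and since $\|(\sum_\lambda|F_\lambda|^2)^{1/2}\|_{L^p}\le(\sum_\lambda\|F_\lambda\|_{L^p}^2)^{1/2}$ for $p\ge2$, it suffices to prove, for each dyadic $\lambda\ge1$, a single-frequency estimate $\|e^{it\sqrt{-\D}}f_\lambda\|_{L^p(\R^n\times[1,2])}\lesssim_\varepsilon\lambda^{s_p-\sigma_0+\varepsilon}\|f_\lambda\|_{L^p}$ for every $\sigma_0<\tfrac2p-\tfrac12$; choosing $\sigma<\sigma_0$ the dyadic sum converges and gives \eqref{eq-07}. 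After rescaling $\xi\mapsto\lambda\xi$, the operator $e^{it\sqrt{-\D}}f_\lambda$ becomes (up to harmless factors) the cone extension operator at unit frequency integrated over a ball $B_R\subset\R^{n+1}$ with $R\sim\lambda$, i.e.\ an oscillatory integral $T^\lambda g(x,t)=\int e^{i\lambda\phi(x,t;\xi)}a(x,t;\xi)g(\xi)\,\ud\xi$ with $\phi(x,t;\xi)=\langle x,\xi\rangle+t|\xi|$, $|\xi|\sim1$. Localizing $\xi$ to $O(1)$-separated caps and straightening places $\phi$ in the admissible homogeneous convex class, so the matter is reduced to a fixed-scale bound $\|T^\lambda g\|_{L^p(B_R)}\lesssim_\varepsilon R^{\varepsilon}R^{\gamma_p}\|g\|_{L^2}$ with $\gamma_p$ small enough to feed the above.

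The heart of the matter is the sharp $k$-broad estimate: for $\phi$ homogeneous convex, $\|T^\lambda g\|_{\br^p_{k,A}(B_R)}\lesssim_\varepsilon R^{\varepsilon}\|g\|_{L^2}$ (with the natural normalization) holds throughout the conjectured range of the $k$-broad exponent. I would prove this by polynomial partitioning together with induction on the radius $R$, following Guth's method and its conic incarnation due to Ou--Wang \cite{OW}. At each stage one cuts $B_R$ into cells carved out by a polynomial of controlled degree plus a neighbourhood of its zero set: in the cellular case one sums over cells and closes the induction, and in the algebraic case the relevant wave packets cluster transversally along the variety, at which point one invokes a variable-coefficient $k$-linear restriction/Kakeya inequality of Bennett--Carbery--Tao type whose transversality hypotheses are precisely what the convexity conditions on $\phi$ supply. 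The descent to smaller radius uses parabolic rescaling, and the structural point that makes the induction run is that the homogeneous convex class is \emph{closed} under this rescaling. Since variable-coefficient wave packets behave like genuine tubes only up to scale $R^{1/2}$, one also needs the auxiliary induction on scales familiar from Bourgain--Guth. Optimizing the resulting $p$-range over the choice $k=k(n)$ (essentially $k=\lceil(n+2)/2\rceil$) yields exactly the two thresholds in \eqref{eq:maina}, the odd/even dichotomy reflecting the parity of the optimal $k$.

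To return from the $k$-broad norm to the linear $L^p$ norm, use the Bourgain--Guth broad--narrow decomposition: $\|T^\lambda g\|_{L^p(B_R)}$ is bounded by $\|T^\lambda g\|_{\br^p_{k,A}(B_R)}$ plus a sum of narrow pieces in which $T^\lambda g$ concentrates near some $(k-1)$-dimensional subspace. Each narrow piece is controlled by the $\ell^2$-decoupling inequality for the cone (Bourgain--Demeter \cite{BoDe2015}, in its variable-coefficient form), which reduces it to a lower-dimensional oscillatory estimate; iterating downward in $k$, and interpolating with the trivial $L^2$ bound, produces the required $\gamma_p$, and reversing the reductions of the first paragraph gives \eqref{eq-07}. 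I expect the main obstacle to be the second step: one must verify that the convexity and transversality structure of $\phi$ genuinely survives the parabolic rescalings built into the induction, set up the variable-coefficient $k$-linear estimate with the correct geometry for the curved plates and tubes attached to a variable phase, and keep the bookkeeping — polynomial degree, number of cells, and the $R^{1/2}$-versus-$R$ discrepancy — under enough control that the accumulated losses stay $R^\varepsilon$. The Littlewood--Paley reductions, the decoupling treatment of the narrow terms, and the conversion to Sobolev norms are, by now, routine.
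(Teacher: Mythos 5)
Your plan funnels everything through a linear $L^2$-based bound, and this is where the argument breaks. After your Littlewood--Paley reduction, recovering $\|e^{it\sqrt{-\Delta}}f_\lambda\|_{L^p(\R^n\times[1,2])}\lesssim_\varepsilon\lambda^{s_p-\sigma+\varepsilon}\|f_\lambda\|_{L^p}$ with $\sigma<\tfrac2p-\tfrac12$ from a bound of the form $\|T^\lambda g\|_{L^p(B_R)}\lesssim_\varepsilon R^{\gamma_p+\varepsilon}\|g\|_{L^2}$ forces you, at some point, to trade $\|g\|_{L^2}$ for an $L^p$ norm; the only mechanism is spatial localization plus H\"older, which on the relevant ball of radius $\sim R$ costs exactly the factor $R^{n(\frac12-\frac1p)}$, i.e. the whole target power in \eqref{eq:301}. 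Hence you would need $\gamma_p\le\varepsilon$. But a single Knapp example ($g$ a bump on an $R^{-1/2}$-sector, so $\|g\|_{L^2}\sim R^{-\frac{n-1}{4}}$ while $|T^\lambda g|\sim R^{-\frac{n-1}{2}}$ on a plate of measure $\sim R^{\frac{n+1}{2}}$) shows that necessarily $\gamma_p\ge\frac{n+1}{2p}-\frac{n-1}{4}>0$ whenever $p<\frac{2(n+1)}{n-1}$ --- and the entire new range of the theorem, $2\tfrac{3n+5}{3n+1}<p<\tfrac{2(n+1)}{n-1}$ (resp. the even-$n$ analogue), lies below that threshold. Equivalently, at frequency $\lambda$ the best possible $L^2$-based constant is at least $\lambda^{\frac{n+1}{2}(\frac12-\frac1p)}$, which exceeds the required $\lambda^{s_p-(\frac2p-\frac12)}=\lambda^{n(\frac12-\frac1p)-\frac1p}$ in this range; any admissible $\gamma_p$ fed through your reduction gives only $\sigma<\tfrac2p-\tfrac12-\gamma_p$, i.e. nothing beyond the decoupling bound \eqref{eq:aa1}. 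The paper avoids this: only the \emph{broad} part is $L^2$-based (Corollary \ref{cor2}, where the H\"older loss $R^{n(\frac12-\frac1p)}$ is exactly affordable because the $k$-broad constant is $R^\varepsilon$), while the \emph{narrow} part is never converted to $L^2$ --- it is decoupled into $K^{-1}$-slabs with an $\ell^2(L^p)$ right-hand side (Theorem \ref{narrow}) and reinserted into the same problem at scale $R/K^2$ through the parabolic rescaling Lemma \ref{pro2}, closing an induction on scales for the $L^p\to L^p$ quantity $Q_p(R)$.

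There are further mismatches worth flagging. The polynomial-partitioning proof of the $k$-broad estimate does not invoke any Bennett--Carbery--Tao $k$-linear inequality: the $k$-broad ``norm'' is precisely the substitute for it, and the algebraic case splits into tangential and transversal sub-cases handled by dimensional induction and transverse equidistribution; for a general homogeneous convex phase this requires the tangential/transversal dichotomy of Lemma \ref{eqaaa}, absent from your outline. For the narrow term, lower-dimensional (``narrow'') decoupling is \emph{not} available for general homogeneous convex cones (a Pramanik--Seeger approximation does not yield it); this is exactly why the paper inducts over the scale-dependent class $\mathbf\Phi(R)$ whose phases are $K^{-4}$-close to $\frac{|\xi'|^2}{2\xi_n}$, so that Harris's cone decoupling applies, rather than quoting a ``variable-coefficient Bourgain--Demeter'' statement and iterating downward in $k$. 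Finally, the thresholds in \eqref{eq:maina} do not come from optimizing the $k$-broad range alone with $k\approx\lceil(n+2)/2\rceil$: they arise from balancing $2\frac{n+k+1}{n+k-1}$ against $\bar p(k,n)=2\frac{2n-k+5}{2n-k+3}$ (the latter forced by the narrow decoupling range $p\le\frac{2(k-1)}{k-3}$ and the rescaling induction), optimized at $k=\frac{n+5}{2}$ for odd $n$ and $k=\frac{n+4}{2}$ for even $n$.
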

	
	For $n\geq 3$ and $2< p<\frac{2(n+1)}{n-1}$ , the sharp $\ell^p$ decoupling inequality of Bourgain--Demeter \cite{BoDe2015} and the  $L^2$ energy estimate implies
	\beq\label{eq:aa1}
	\big\|e^{it\sqrt{-\Delta}}f\big\|_{L^p(\R^n\times [1,2])}\leq C\|f\|_{L^p_{s_p-\sigma}},\quad\; \sigma<\tfrac{n-1}{2}\big|\tfrac{1}{2}-\tfrac{1}{p}\big|.
	\eeq
	A direct calculation shows that if  $p<\frac{2(n+3)}{n+1}$,
	\beqq
	\frac{n-1}{2}\Big|\frac{1}{2}-\frac{1}{p}\Big|<\frac{2}{p}-\frac{1}{2}.
	\eeqq
	One can see that Corollary \ref{coraaa}  below improves the previous best known local smoothing estimate \eqref{eq:aa1} in range of $2<p<\frac{2(n+1)}{n-1}$ for $n\geq 3$. Indeed, by interpolating using \eqref{eq-07}, \eqref{eq:aa1} and the trivial $L^2$ bound, we have the following.
	\begin{corollary}\label{coraaa}
		Let $n\geq 3$,  Then
		\beq 
		\big\|e^{it\sqrt{-\Delta}}f\big\|_{L^p(\R^n\times [1,2])}\leq C\|f\|_{L^p_{s_p-\sigma}}
		\eeq
		for $\sigma<\sigma_p$, where 
		if $n\geq 3$ is odd,  \begin{equation}\sigma_p=\left\{\begin{aligned}
				&\f{3n-3}{4}\big(\f{1}{2}-\f{1}{p}\big),\quad 2<p\leq 2\f{3n+5}{3n+1},\\
				&\f{n-1}{n+3}\big(\f{3n+1}{6n+10}-\f{1}{p}\big)+\f{3n-3}{6n+10},\quad 2\f{3n+5}{3n+1}<p\le2\f{n+1}{n-1},
			\end{aligned}\right.\end{equation}
		if $n \geq 3$ is even,  
		\begin{equation}\sigma_p=\left\{\begin{aligned}
				&\f{3n-2}{4}\big(\f{1}{2}-\f{1}{p}\big),\quad 2<p\leq 2\f{3n+6}{3n+2},\\
				&\f{n-2}{n+4}\big(\f{3n+2}{6n+12}-\f{1}{p}\big)+\f{3n-2}{6n+12},\quad 2\f{3n+6}{3n+2}<p\le2\f{n+1}{n-1}.
			\end{aligned}\right.\end{equation}

	\end{corollary}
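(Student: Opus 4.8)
The plan is to deduce Corollary~\ref{coraaa} from the three local smoothing bounds already at hand --- the trivial $L^2$ bound, Theorem~\ref{theoa2}, and the Bourgain--Demeter estimate \eqref{eq:aa1} --- by interpolation, with no new harmonic analysis. First I would record the three inputs in a common frequency-localized form. Writing $P_\lambda$ for a Littlewood--Paley projection onto $\{|\xi|\sim\lambda\}$, each input is equivalent, up to a harmless $\e$-loss in the exponent, to
\beq\label{eq:floc}
\big\|e^{it\sqrt{-\Delta}}P_\lambda f\big\|_{L^p(\R^n\times[1,2])}\lesssim_\e\lambda^{s_p-\sigma+\e}\,\|f\|_{L^p(\R^n)},\qquad\lambda\ge1,
\eeq
holding for $(p,\sigma)=(2,0)$ (since $e^{it\sqrt{-\Delta}}$ is, for each $t$, an $L^2$ isometry), for every $p$ above the threshold in \eqref{eq:maina} and every $\sigma<\tfrac2p-\tfrac12$ (by Theorem~\ref{theoa2}), and for every $2<p\le\tfrac{2(n+1)}{n-1}$ and every $\sigma<\tfrac{n-1}{2}\big|\tfrac12-\tfrac1p\big|$ (by \eqref{eq:aa1}, the endpoint $p=\tfrac{2(n+1)}{n-1}$ being the sharp Bourgain--Demeter bound).

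Since $e^{it\sqrt{-\Delta}}P_\lambda$ is a fixed operator, I can interpolate \eqref{eq:floc} by ordinary Riesz--Thorin: applied at admissible pairs $(p_0,\sigma_0)$ and $(p_1,\sigma_1)$ it produces \eqref{eq:floc} at $(p_\theta,\sigma_\theta)$ with $\tfrac1{p_\theta}=\tfrac{1-\theta}{p_0}+\tfrac\theta{p_1}$, and because $s_p=(n-1)\big(\tfrac12-\tfrac1p\big)$ is affine in $\tfrac1p$ on $[2,\infty)$ the gain also interpolates linearly, $\sigma_\theta=(1-\theta)\sigma_0+\theta\sigma_1$. For $n$ odd put $p_*=2\tfrac{3n+5}{3n+1}$, so $\tfrac12-\tfrac1{p_*}=\tfrac2{3n+5}$ and the limiting Theorem~\ref{theoa2} gain there is $\sigma_{p_*}:=\tfrac2{p_*}-\tfrac12=\tfrac{3(n-1)}{2(3n+5)}$. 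Interpolating the $L^2$ bound against Theorem~\ref{theoa2} at exponents decreasing to $p_*$ then gives, for $2<p\le p_*$, every gain $\sigma<\tfrac{3n-3}{4}\big(\tfrac12-\tfrac1p\big)$ --- the first branch of $\sigma_p$ --- while interpolating Theorem~\ref{theoa2} (again at exponents decreasing to $p_*$) against \eqref{eq:aa1} at $p_1:=\tfrac{2(n+1)}{n-1}$, where its gain is $\tfrac{n-1}{2(n+1)}$, gives the second branch for $p_*<p\le p_1$. (Theorem~\ref{theoa2} holds only for $p>p_*$, which forces interpolation with exponents strictly above $p_*$ followed by a limit; this costs nothing, the corollary asserting only $\sigma<\sigma_p$.) The case $n$ even is identical with $p_*=2\tfrac{3n+6}{3n+2}$.

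Finally I would resum the dyadic pieces. Having \eqref{eq:floc} for every exponent strictly below the target $\sigma_p$, I can afford a slightly larger localized gain, contributing a factor $\lambda^{-\delta}$ with $\delta>0$; then $\sum_\lambda\|e^{it\sqrt{-\Delta}}P_\lambda f\|_{L^p}$ converges and is dominated by $\|f\|_{L^p_{s_p-\sigma}}$ via the Littlewood--Paley characterization of $L^p_s$ together with $\ell^2\hookrightarrow\ell^p$ for $p\ge2$; the low-frequency part is immediate since $e^{it\sqrt{-\Delta}}P_{\le1}$ is bounded on $L^p$ uniformly for $t\in[1,2]$. This gives the corollary for Schwartz $f$ and all $\sigma<\sigma_p$.

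There is no genuine analytic difficulty here --- all the content sits in Theorem~\ref{theoa2} --- so the only step needing care is organizational: one must check that the explicit affine formulas for $\sigma_p$ really are the optimal interpolant, namely that below $p_*$ the binding constraint is the $L^2$ endpoint (Theorem~\ref{theoa2} being unavailable there), that above $p_*$ the binding constraints are Theorem~\ref{theoa2} at $p_*$ and \eqref{eq:aa1} at $p_1$ --- so $\sigma_p$ is the polygonal graph through $\big(\tfrac12,0\big)$, $\big(\tfrac1{p_*},\sigma_{p_*}\big)$ and $\big(\tfrac1{p_1},\tfrac{n-1}{2(n+1)}\big)$ --- and that this graph is concave at $p_*$, which reduces to a slope inequality such as $\tfrac{3(n-1)}4>\tfrac{n-1}{n+3}$ in the odd case. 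These value and slope identities I would verify separately for $n$ odd and $n$ even from the formulas for $p_*$, $\sigma_{p_*}=\tfrac2{p_*}-\tfrac12$, and the endpoint value $\tfrac{n-1}{2(n+1)}$.
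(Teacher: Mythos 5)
Your proposal is correct and is essentially the paper's own argument: the paper proves Corollary \ref{coraaa} simply by interpolating the trivial $L^2$ bound, Theorem \ref{theoa2}, and the Bourgain--Demeter estimate \eqref{eq:aa1}, and your frequency-localized Riesz--Thorin formulation (with the gain interpolating linearly since $s_p$ is affine in $1/p$, limits onto the endpoint $p_*$, and dyadic resummation) just supplies the standard details the paper leaves implicit. Your numerical identifications of the vertices $\big(\tfrac12,0\big)$, $\big(\tfrac1{p_*},\tfrac2{p_*}-\tfrac12\big)$, $\big(\tfrac1{p_1},\tfrac{n-1}{2(n+1)}\big)$ and of both branches of $\sigma_p$ check out in the odd and even cases.
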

	See Table \ref{table:1} for a detailed comparison for the improvement at the conjectured critical exponent $p_c=\frac{2n}{n-1}$ for $n=3,4,5,6$. See Figure \ref{fig1} for a $\sigma$-$p^{-1}$ plot for the odd $n$ case.	 \begin{figure}
		\centering
		\includegraphics[width=0.95\textwidth]{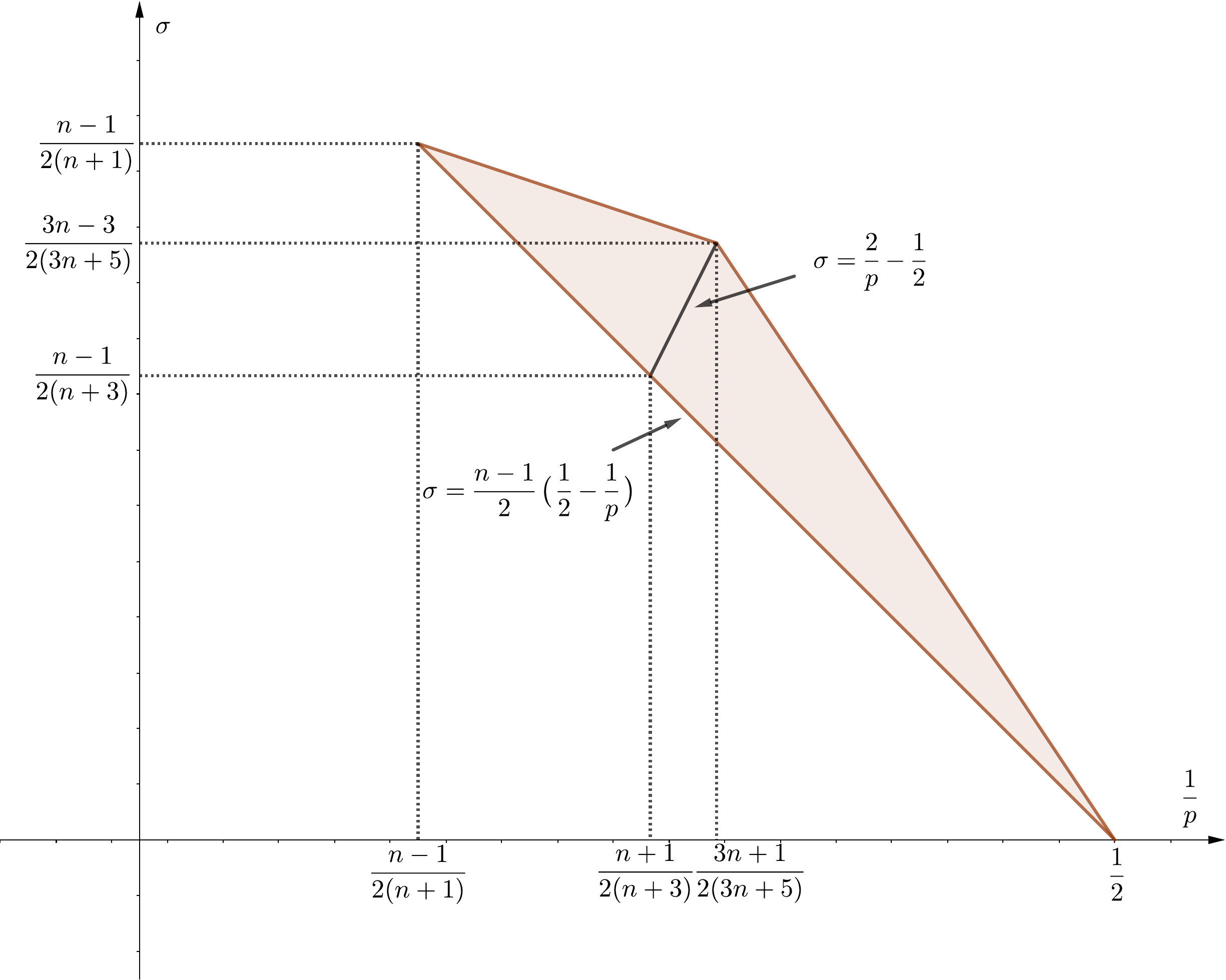}
		\caption{$\sigma$-$p^{-1}$ graph for Corollary \ref{coraaa} when $n$ is odd.}
		\label{fig1}
	\end{figure}
	\begin{table}
		\begin{center}
			\begin{tabular}{||c| c| c| c| c||} 
				\hline
				$n$ & $p_c$ & $\sigma_{p_c}$ (conjectured) &  $\sigma_{p_c}$ (\cite{BoDe2015})& $\sigma_{p_c}$ (Corollary \ref{coraaa}) \\ [0.5ex] 
				\hline
				3&$3$&$1/3$& $1/6$& $2/9$\\
				\hline
				4 & $8/3$ & $3/8$   &  $3/16$ & 9/32\\
				\hline
				5 & $5/2$ & $2/5$  & $1/5$ & $3/10$\\
				\hline
				6 & $12/5$ & $5/12$  & $5/24$ & $1/3$\\
				
				\hline
			\end{tabular}
			
			\caption{Comparing Corollary \ref{coraaa} to previous records at $p_c$.}
			\label{table:1}
		\end{center}
	\end{table}
	


	Local smoothing of the half wave operator has been studied extensively.  As discussed above, instead of handling the half wave operator $e^{it\sqrt{-\Delta}}f$ directly, people usually opt to establish decoupling inequalities or square function estimates, and then apply them to the local smoothing problem. At this point, let us briefly review both approaches.
	
	Define the domain $\Gamma\subset \R^n$ as
	\beqq
	\Gamma:=\big\{(\xi',\xi_n)\in \R^n\backslash\{0\}:\; 1\leq \xi_n\leq 2,\;\; |\xi'|\leq \xi_n\big\},
	\eeqq
	and make slab-decomposition with respect to $\Gamma$ in the following way.
	Assuming $R>1$, we select a collection of $R^{-\frac12}$-maximally separated points $\{(\xi'_\nu,1)\}_\nu$ in the unit ball $B^{n-1}(0,1)\times \{1\}$ of the affine hyperplane $\xi_n=1$. For each $\nu$, we define the $\nu$-slab as
	\beqq
	\nu:=\Big\{(\xi',\xi_n)\in \Gamma: \Big|\tfrac{\xi'}{\xi_n}-\xi_{\nu}\Big|\leq R^{-\frac12}\Big\},
	\eeqq
	Let  $\chi_\nu$ be the characteristic function of the $\nu$-plate, and  set  $f^\nu=\mathcal F^{-1}(\widehat{f}\chi_\nu)$.
	
	Under notation above, the $\ell^p$-decoupling inequality due to Bourgain--Demeter is 
	\beq\label{decou}
	\Big\|\sum_{\nu} e^{it{\sqrt{-\Delta}}}f^\nu\Big\|_{L^p(\R^{n+1})}\leq C_\varepsilon R^{(n-1)|\f{1}{2}-\f{1}{p}|-\frac{1}{p}+\varepsilon}\Big(\sum_{\nu} \|e^{it\sqrt{-\Delta}}f^\nu\|_{L^p(\R^{n+1})}^p\Big)^{\frac{1}{p}},
	\eeq
	for $ p\geq \tfrac{2(n+1)}{n-1}$. As a direct consequence of \eqref{decou}, Conjecture \ref{LS conj euclidean} has been resolved for the range $p\geq \frac{2(n+1)}{n-1}$.  It seems that $\ell^p$-decoupling inequality is well suited for handling local smoothing estimate with larger exponents, whereas,  inefficient for tackling  the case when $p$ is close to the endpoint $p=\frac{2n}{n-1}$.  In contrast, the following conjectured (reverse) square function inequality has been proven to be powerful near the endpoint.
	\beq \label{eq:211}
	\Big\|\sum_{\nu} e^{it{\sqrt{-\Delta}}}f^\nu\Big\|_{L^p(\R^{n+1})}\leq C_\varepsilon R^{\varepsilon} \Big\|\Big(\sum_\nu |e^{it\sqrt{-\Delta}}f^\nu|^2\Big)^{\frac{1}{2}}\Big\|_{L^p(\R^{n+1})},\;\;\; 2\leq p\leq \tfrac{2n}{n-1}.
	\eeq
	Recently, in the remarkable work of Guth--Wang--Zhang \cite{GWZ}, the authors established inequality \eqref{eq:211}
	in dimension two. Following the argument in \cite{MSS}, \eqref{eq:211} leads to the full range of sharp local smoothing estimate for $n=2$. 
	To be more precise,  to handle the local smoothing problem using the square function inequality, we also need to use the  Nikodym maximal function inequality associated to the cone.
	The sharp result for such Nikodym maximal function inequality was established in \cite{MSS} for $n=2$, but its higher dimensional counterpart is still wide open, which limits us, to some extent, to advance the research of the local smoothing in higher dimensions. For more discussion about the local smoothing estimate of the half-wave operator, see \cite{LeeJ,LV,HNS,Tao-Vargas-II}.
	
	In this paper, motivated by the seminal work of Guth \cite{Guth}, we circumvent these problems through handling the operator $e^{it\sqrt{-\Delta}}$ directly by employing the so-called $k$-broad ``norm" estimate, which can be seen as a weaker version of the multilinear restriction estimate due to Bennett--Carbery--Tao \cite{BCT06}. It is worth noting that, there is a difference in the results of Theorem \ref{theoa2}  between odd and even spatial dimensions. This is a common theme in the study of problems related to the restriction conjecture. In particular, for such problems in the variable coefficient setting, certain Kakeya compression phenomena exist. Such phenomena are usually different between odd and even dimensions. One may refer to \cite{BelHicSog18P-surv,GHI,MiSo,wisewell} for more details.
	Therefore, even though we only work in the Euclidean case in this article, we believe that the methods used in this paper may stimulate the research of the local smoothing estimate for the class of Fourier integral operator satisfying cinematic curvature conditions in higher dimensions. 
	
	Let us describe the outline of our proof and the key difficulties that arise. After some routine reductions, we shall perform a multi-scale broad-narrow argument which is inspired by the arguments in \cite{Guth, OW}. However, there are two main difficulties that we need to overcome. Firstly, unlike the restriction problem for a circular cone, Lorentz rescaling arguments are much more complicated in the local smoothing setting. As a result, we have to prove the $k$-broad estimate for general positively curved cones, which in turn requires us to establish several geometric lemmas without the nice symmetry of a circular cone. This is done in Section \ref{section-4}. Secondly, since we have to deal with general cones, we would need a narrow decoupling theorem for them. Unfortunately, however, unlike the parabolic case, the missing narrow decoupling theorem for general cones does not follow directly from the circular cone case via a Pramanik--Seeger approximation argument \cite{PS}. To overcome this, instead of proving estimates for a fixed class of cones, we opt for a new induction on scales argument with respect to a whole family of classes of cones $\mathbf\Phi(R)$, which are indexed by the physical scale $R$. To be more precise, the class of cones that we care about will approximate the circular cone better and better as the scale $R$ grows. The definition of the class $\mathbf\Phi(R)$ will be given in Section \ref{section-2}, and the induction argument will be given in Section \ref{section-7}. The authors believe that this approach is novel and may serve an important role in the study of other related problems.
	
	The rest of this paper is organized as follows: In Section \ref{section-2}, we review some preliminaries and basic reductions. In Section \ref{wavee}, we present the wave packet decomposition used in our proof. In Section \ref{section-4}, we establish a geometric lemma for a general class of cones, which plays a crucial role in the proof of $k$-broad ``norm'' estimate. In Section \ref{section-5}, we prove the $k$-broad ``norm'' estimate via polynomial partitioning, in the spirit of \cite{Guth, OW}. In Section \ref{paralemma}, a parabolic rescaling lemma suited for our setting will be established, which is a critical ingredient in our induction on scale argument. This parabolic rescaling lemma is similar to the ones established in \cite{BHS, GLMX} as we are dealing with a whole class of phase functions. Finally, we give the proof of Theorem \ref{theoa2} and 
	state the restriction estimates for general cones in Section \ref{section-7}.
	
	\indent{\bf Acknowledgements.} This project was supported by the National key R\&D program of China: No. 2022YFA1005700 and 2022YFA1007200. C. Gao was partially supported by Chinese Postdoc Foundation Grant: No. 8206300279. B. Liu was partially supported by SUSTech start-up counterpart Y01286235. C. Miao was partially supported by NSF of China grant: No. 11831004.  Y. Xi was partially supported by NSF of China grant: No. 12171424 and the Fundamental Research Funds for the Central Universities 2021QNA3001. The authors would like to thank Prof. Hong Wang for some helpful discussions.  The first author would like to thank Prof. Gang Tian's support and Prof. Dongyi Wei's helpful discussion. The authors want to acknowledge that David Beltran and Olli Saari realised independently \cite{BS} that it is possible to use $k$-broad  estimates to obtain improved local smoothing estimates. The authors want to thank Prof. David Beltran for some friendly communications, and for pointing out that broad and narrow bounds for the circular cone only is not enough for proving local smoothing type estimates. The authors want to thank an anonymous
	referee for his or her thorough and invaluable feedback.
	
	\indent{\bf Notation.} For non-negative quantities $X$ and $Y$, we will write $X\lesssim Y$ to denote the inequality $X\leq C Y$ for some constant $C>0$. If $X\lesssim Y\lesssim X$, we will write $X\sim Y$. Dependence of the implicit constants on the spatial dimensions or integral exponents such as $p$ will be suppressed; dependence on additional parameters will be indicated using subscripts or parenthesis. For example, $X\lesssim_u Y$ indicates $X\leq CY$ for some $C=C_u$. For a  function $A(R)$, we write $A(R)={\rm RapDec}(R)$ if for any $N\in\mathbb N$, there is a constant $C_N$ such that
	\beqq
	| A(R)|\leq C_N R^{-N} \,\,\quad\text{for all}\,\; R\geq 1.
	\eeqq
	
	Throughout the paper, $\chi_E$ denotes the characteristic function of the set $E$.
	We usually denote by $B_R^n(a)$, or simply $B_R(a)$, a ball in $\R^n$ with center $a$ and radius $R$. We will also denote by $B_R^n$, or simply $B_R$, a ball of radius $R$ and arbitrary center in $\R^n$. Let $r>0$, for the sake of convenience, we denote $C_{r}^{n+1}$ to be the cylinder $B_r^{n}\times [-r,r]$. Denote by $A(R):=B_{2R}^n(0)\setminus B_{R/2}^n (0)$. We denote $w_{B^{n}_{R}(x_0)}$ to be a nonnegative weight function adapted to the ball $B^{n}_{R}(x_0)$ such that
	$$ w_{B^{n}_{R}(x_0)}(x)\lesssim (1+R^{-1}|x-x_0|)^{-M},$$
	for some large constant $M\in \mathbb{N}$. 
	
	For any subspace $V\subset \R^n$ and $\eta\in \R^n$, we adopt the notation ${\rm Ang}(\eta,V)$ to denote the smallest angle between $\eta$ and any given vector $v\in V\backslash \{0\}$. Let $W\subset \R^n$ be another subspace, define ${\rm Ang}(V,W)$ as
	\beqq
	{\rm Ang}(V,W):=\min_{v\in V\backslash\{0\}, w\in W\backslash \{0\}}{\rm Ang}(v,w).
	\eeqq


	\section{preliminaries }\label{section-2}
	\subsection{Basic reductions and phase function classes} In this paper, as is standard in rescaling arguments, we shall prove estimates associated to a large scale $R\gg 1$, and an arbitrarily small parameter $\varepsilon>0$. We say a phase function $\phi$ is (positively) homogeneous of degree one if it  satisfies 
	\begin{itemize}
		\item[${\bf H_1}$:]$ \phi\in C^\infty(\R^n\backslash\{0\}),\  \phi(\lambda \xi)=\lambda \phi(\xi), \forall \lambda>0$.
	\end{itemize}
	Moreover, we say $\phi$	 satisfies the \emph{homogeneous convex} conditions if it satisfies the following condition as well.
	\begin{itemize}
		\item[${\bf H_2}$:]The Hessian of $\phi$ i.e. $\big(\tfrac{\partial^2\phi}{\partial x_i\partial x_j}\big)_{n\times n}$
		has $(n-1)$-positive eigenvalues.
	\end{itemize}
	A prototypical example for the homogeneous convex function is given by $\phi(\xi)=|\xi|$.  We are concerned with the half-wave operator $e^{it\sqrt{-\Delta}}$, and it can be reduced to considering a oscillatory integral  involving the phase function $|\xi|$. For technical reasons, we need to employ an induction on scales argument which requires the phase function to stay invariant under certain transformations, while the function $|\xi|$ alone does not ensure this. Thus, we shall work with the following class of phase functions.
	\begin{definition}[Phase function class $\mathbf\Phi$]We say a function $\phi$ lies in the class $\mathbf\Phi$, if $\phi$ obeys the homogeneous convex conditions ${\bf H_1}$, ${\bf H_2}$, with eigenvalues of 
		$\big(\frac{\partial^2\phi}{\partial \xi_i\partial \xi_j}\big)$ in the interval $[1/2, 2]$, and 
		\beq \label{assu}|\partial^\alpha\phi(\xi)|\leq C_{\rm par}, \;\; \forall\,|\alpha|\leq N_{\rm par},\;\; \xi\in {\rm N}_{\varepsilon_0}(e_n),\eeq
		where $C_{\rm par}>0$, $0<\varepsilon_0\ll 1$ and $N_{\rm par}\in\mathbb N$ are universal constants, and ${\rm N}_{\varepsilon_0}(e_n)$ denotes the $\varepsilon_0$-neighborhood of $e_n$.
	\end{definition}
	
	Moreover, to facilitate the proof of the narrow decoupling theorem in Section \ref{section-7}, we will also consider a more special class of phase functions satisfying a more precised condition which we will define now. Let $K_0>0,\,\tilde\delta=\tilde\delta(\varepsilon)>0$, both of which will be chosen later in the argument.
	
	\begin{itemize}
		\item[${\bf H_3}$:]Let $K=K_0 R^{\tilde{\delta}}$, and
		$$\phi_R(\xi)=\frac{\xi_1^2+\cdots+\xi_{n-1}^2}{2\xi_n}+K^{-4}{\rm E}_R(\xi),\quad\forall \xi\in {\rm N}_{\varepsilon_0}(e_n),$$
		where ${\rm E}_R(\xi)$ is a homogeneous function of  degree $1$ and satisfies 
		$$|\partial^\alpha {\rm E}_R(\xi)|\leq c_{\rm par},\; \; |\alpha|\leq N_{\rm par},$$
		for some fixed constant $0<c_{\rm par}\ll 1$.
	\end{itemize}
	\begin{definition}[Phase function class $\mathbf\Phi(R)$]We say a function $\phi_R$ is in the class $\mathbf\Phi(R)$, if $\phi_R$ obeys condition ${\bf H_3}$.
	\end{definition}
	Here and throughout the paper, we shall always assume  $\phi\in\mathbf\Phi$ and $\phi_R\in\mathbf\Phi(R)$. It should be noted that one needs to be extra careful when working with the class  $\mathbf\Phi(R)$, since it depends on the scale $R$. To be more precise, we need to make sure that after each rescaling step, our new phase function lands in the appropriate class associated to the new scale.  In addition, since it is easy to check that $\mathbf\Phi(R)\subset\mathbf\Phi$, any statements that we prove for phase functions in the bigger class $\mathbf\Phi$ will certainly hold for any $\phi_R\in\mathbf\Phi(R)$.
	
	Next, let us collect some useful standard results from previous works.
	
	\subsection{Transference between local and global estimates}
	This reduction was used in \cite{GMZ} for a slightly different case. We give the details here for completeness.
	
	Let  $\psi$ be a non-negative smooth function on $\R^n$ such that
	\beq\label{poss}
	{\rm supp}\;\widehat{\psi} \subset B_1^n(0),\;\sum_{\ell \in \mathbb{Z}^{n}} \psi(x-\ell)\equiv 1,\quad\; \forall\; x\in \R^n.
	\eeq
	Define  $\psi_{\ell}(x):= \psi(R^{-1}x-\ell)$ and  $f_\ell=\psi_\ell f$.
	\begin{lemma}\label{lemma-1}  Assume ${\rm supp}\,\widehat{f}\subset {\rm A}(1)$, then for any $\varepsilon>0$, there holds
		\beq\label{eq:2}
		|e^{it\sqrt{-\Delta}} f(x)|\lesssim_{\varepsilon}  \big|e^{it\sqrt{-\Delta}} \big(\Psi_{B_{R^{1+\varepsilon}}^n(x_0)}f\big)(x)\big|+{\rm RapDec}(R)\sum_{|\ell|>R^\varepsilon} \big\|f|\psi_\ell(\cdot-x_0)|^{\f{1}{2}}\big\|_{L^p(w_{B_{R}^n(x_0)})},
		\eeq
		for $(x,t)\in B_{R}^n(x_0)\times [-R,R]$, $1< p<\infty$, where
		\begin{align*}
			\Psi_{B_{R^{1+\varepsilon}}^n(x_0)}(x):=&\sum_{|\ell |\leq  R^\varepsilon}\psi(R^{-1}(x-x_0)-\ell).
		\end{align*}
		
	\end{lemma}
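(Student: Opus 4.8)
\textbf{Proof strategy for Lemma \ref{lemma-1}.} The plan is to exploit the finite propagation speed of the wave operator together with the rapid spatial decay of the partition of unity $\psi_\ell$. Fix $(x,t)\in B_R^n(x_0)\times[-R,R]$. Using the partition of unity $\sum_\ell \psi_\ell(\cdot-x_0)\equiv 1$ (after the trivial rescaling and translation by $x_0$), I would decompose $f=\sum_\ell \psi_\ell(\cdot-x_0)f$ and correspondingly split
\beqq
e^{it\sqrt{-\Delta}}f(x)=\sum_{|\ell|\le R^\varepsilon}e^{it\sqrt{-\Delta}}\big(\psi_\ell(\cdot-x_0)f\big)(x)+\sum_{|\ell|>R^\varepsilon}e^{it\sqrt{-\Delta}}\big(\psi_\ell(\cdot-x_0)f\big)(x).
\eeqq
The first sum is exactly $e^{it\sqrt{-\Delta}}(\Psi_{B_{R^{1+\varepsilon}}^n(x_0)}f)(x)$ by definition of $\Psi_{B_{R^{1+\varepsilon}}^n(x_0)}$, which gives the main term. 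So the whole point is to show the tail sum is ${\rm RapDec}(R)$ times the stated $L^p$ quantity.

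For the tail, I would analyze the kernel of $e^{it\sqrt{-\Delta}}$ acting on a piece whose Fourier support is in the annulus $A(1)=B_2^n(0)\setminus B_{1/2}^n(0)$ (here I use the hypothesis ${\rm supp}\,\widehat f\subset A(1)$). Writing $e^{it\sqrt{-\Delta}}g(x)=\int K_t(x-y)g(y)\,\ud y$ with $K_t$ a suitable oscillatory integral, the nonstationary phase / integration by parts on the frequency side, using that $|\xi|$ is smooth on $A(1)$, yields $|K_t(z)|\lesssim_N (1+|z|-|t|)^{-N}$ for $|z|$ much larger than $|t|$; more precisely, since $|t|\le R$, for $|z|\gtrsim R^{1+\varepsilon}$ we gain arbitrary powers of $R$. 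When $|\ell|>R^\varepsilon$, the function $\psi_\ell(\cdot-x_0)$ is (essentially) supported where $|y-x_0-R\ell|\lesssim R$, hence $|x-y|\sim R|\ell|\gtrsim R^{1+\varepsilon}$ since $x\in B_R^n(x_0)$; of course $\psi$ is Schwartz rather than compactly supported, so I would split $\psi_\ell$ further into a main compactly supported bump plus a rapidly decaying error, or simply use the Schwartz decay of $\psi$ directly. Either way, pairing the kernel decay with Hölder's inequality (introducing the weight $w_{B_R^n(x_0)}$ and the factor $|\psi_\ell(\cdot-x_0)|^{1/2}$ to make an honest $L^p$ norm on the right, absorbing the other $|\psi_\ell|^{1/2}$ and the kernel into the ${\rm RapDec}(R)$) produces the claimed bound, after summing the geometric-type series in $|\ell|$, which converges and is harmless against the rapid decay.

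\textbf{Main obstacle.} The bookkeeping is routine but the genuinely delicate point is the kernel estimate for $e^{it\sqrt{-\Delta}}$ on functions with frequency in a fixed annulus: one must be careful that $t$ ranges over $[-R,R]$ rather than being $O(1)$, so the kernel is only small in the region $|z|-|t|\gtrsim R^{1+\varepsilon}$, which is precisely the regime forced by $|\ell|>R^\varepsilon$ and $x\in B_R^n(x_0)$. Getting the thresholds to line up — and handling the Schwartz (non-compact) tails of $\psi_\ell$ so that everything is genuinely ${\rm RapDec}(R)$ rather than merely small — is where the care is needed; the rest is Hölder and summation.
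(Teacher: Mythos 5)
Your proposal follows essentially the same route as the paper's proof: split $f=\sum_\ell f_\ell$ via the partition of unity, identify the $|\ell|\le R^\varepsilon$ part with the main term $e^{it\sqrt{-\Delta}}(\Psi_{B_{R^{1+\varepsilon}}^n(x_0)}f)$, and control the tail by the integration-by-parts kernel bound for the frequency-localized propagator (valid since $|t|\le R$ while $|x-y|\gtrsim R^{1+\varepsilon}$), combined with H\"older's inequality splitting off $|\psi_\ell|^{1/2}$ and the rapid decay of $\psi$ to absorb everything into ${\rm RapDec}(R)$. This is precisely the paper's argument, so the proposal is correct and not materially different.
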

	\begin{proof}
		Without loss of generality, we may assume that $x_0=0$.  
		We rewrite $e^{it\sqrt{-\Delta}} f$ via \eqref{poss} as
		\beq\label{equ:eitadd}
		e^{it\sqrt{-\Delta}} f(x)=\sum_{\ell \in \mathbb{Z}^{n}} \int_{\R^n}\int_{\R^n}e^{i((x-y)\cdot \xi+t|\xi|)}\eta(\xi)f_\ell (y)\, d\xi d y,
		\eeq
		where $\eta(\xi)\in C_c^{\infty}(B_1^n(0))$ satisfying that $\eta(\xi)=1$ for $\xi\in B_1^n(0)$. The associated kernel  $K_t(\cdot)$ of the operator $e^{it\sqrt{-\Delta}}\eta(D)$ is given by 
		\beqq
		K_t(x)=\int_{\R^n} e^{i(x\cdot\xi+t|\xi|)}\eta(\xi)\,d \xi.
		\eeqq
		Noting that $|t|\leq R$, by an integration by parts argument, we see that
		\beq\label{eq:e2}
		|K_t(x)|\leq C\chi_{|x|\leq CR}+ C_M\frac{\chi_{|x|\geq CR}}{(1+|x|)^M},\quad \text{ for all } M\in \mathbb N.
		\eeq
		Now we decompose $e^{it\sqrt{-\Delta}}f(x)$ into two parts
		\begin{align}
			e^{it\sqrt{-\Delta}}f(x)&= \sum_{|\ell |\leq R^\varepsilon} e^{it\sqrt{-\Delta}}f_\ell(x)+\sum_{|\ell|> R^\varepsilon } e^{it\sqrt{-\Delta}}f_\ell(x)\nonumber\\
			&=e^{it\sqrt{-\Delta}}(\Psi_{B_{R^{1+\varepsilon}}^n(0)}f)(x)+\sum_{|\ell|> R^\varepsilon } e^{it\sqrt{-\Delta}}f_\ell(x).\label{eq:5}
		\end{align}
		To complete the proof it suffices to bound the second term.  By H\"older's inequality, we have
		\begin{align*}
			\Big|\sum_{|\ell|> R^\varepsilon } e^{it\sqrt{-\Delta}}f_\ell(x)\Big|=&\Big|\sum_{|\ell|>R^\varepsilon} \int_{\R^n} K_t(x-y)f_\ell(y)\,d y\Big|\\
			\le &\Big|\sum_{|\ell|>R^\varepsilon}\int_{\R^n} |K_t(x-y)|^{\f{1}{2}}|\psi_\ell(y)|^{\f{1}{2}}|\psi_\ell(y)|^{\f{1}{2}}|f(y)||K_t(x-y)|^{\f{1}{2}}\,d y\Big|\\
			\le &\sum_{|\ell|>R^\varepsilon}\Big(\int_{\R^n} |K_t(x-y)|^{\f{p'}{2}}|\psi_\ell(y)|^{\f{p'}{2}}\,d y\Big)^{\f{1}{p'}}\Big(\int_{\R^n} |\psi_\ell(y)|^{\f{p}{2}}|f(y)|^p|K_t(x-y)|^{\f{p}{2}}\,d y\Big)^{\f{1}{p}}.
		\end{align*}
		For $(x,t)\in B_{R}^{n}(0)\times [-R,R]$, using the rapidly decay of $\psi$ and \eqref{eq:e2}, we have
		\begin{align*}
			|K_t(x-y)\psi_\ell(y)|\lesssim_M\f{R^{-\varepsilon M}}{\big(1+|R^{-1}y-\ell|\big)^M}, \quad\; |\ell|>R^\varepsilon,\; \forall\;x\in B_{R}^n(0),\;y\in \R^n,
		\end{align*}
		and
		$$|K_t(x-y)|\lesssim_M  \f{1}{\Big(1+\f{|y|}{R}\Big)^{M/2}}, \;\; \forall \;x\in B_{R}^n(0),\;y\in \R^n.$$
		Hence,
		\begin{align*}
			\Big|\sum_{|\ell|> R^\varepsilon } e^{it\sqrt{-\Delta}}f_\ell(x)\Big|\lesssim_M R^{-\varepsilon M+\frac{n}{p'}}\sum_{|\ell|>R^\varepsilon}
			\big\|f|\psi_\ell|^{\f{1}{2}}\big\|_{L^p(w_{B_{R}^n(0)})}.
		\end{align*}
	\end{proof}

	As a immediate consequence  of Lemma \ref{lemma-1}, we obtain the relation between local and global estimates in the spatial variables.
	\begin{corollary}\label{lemma-2}
		Let  $I\subset [-R,R]$ be  an interval. Suppose  ${\rm supp}\,\widehat{f}\subset {\rm A}(1)$ and
		\beq\label{eq:7}
		\|e^{it\sqrt{-\Delta}}f\|_{L_{x,t}^p(B_{R}^n \times I)}\leq C R^s \|f\|_{L^p},
		\eeq
		then, given any $ \varepsilon>0$, there exists a constant $C_\varepsilon$ such that
		\beq\label{eq:3}
		\|e^{it\sqrt{-\Delta}}f\|_{L_{x,t}^p( \R^n\times I )}\le C_\varepsilon R^{s+\varepsilon} \|f\|_{L^p}.
		\eeq
	\end{corollary}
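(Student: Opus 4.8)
The plan is to deduce the global-in-space estimate \eqref{eq:3} from the local one \eqref{eq:7} by a standard pasting argument based on Lemma \ref{lemma-1}. First I would tile $\R^n$ by a finitely overlapping family of balls $\{B_R^n(x_0)\}_{x_0\in R\Z^n}$, so that the left side of \eqref{eq:3} is controlled by $\big(\sum_{x_0\in R\Z^n}\|e^{it\sqrt{-\Delta}}f\|_{L^p_{x,t}(B_R^n(x_0)\times I)}^p\big)^{1/p}$. For each fixed center $x_0$, I would apply Lemma \ref{lemma-1}: since $\widehat f$ is supported in ${\rm A}(1)$, the pointwise bound \eqref{eq:2} gives, for $(x,t)\in B_R^n(x_0)\times I$,
\begin{equation*}
|e^{it\sqrt{-\Delta}}f(x)|\lesssim_\varepsilon \big|e^{it\sqrt{-\Delta}}\big(\Psi_{B_{R^{1+\varepsilon}}^n(x_0)}f\big)(x)\big|+{\rm RapDec}(R)\sum_{|\ell|>R^\varepsilon}\big\|f|\psi_\ell(\cdot-x_0)|^{1/2}\big\|_{L^p(w_{B_R^n(x_0)})}.
\end{equation*}
The first term is a solution whose data $\Psi_{B_{R^{1+\varepsilon}}^n(x_0)}f$ is essentially supported in a ball of radius $O(R^{1+\varepsilon})$, so after translating $x_0$ to the origin and applying the hypothesis \eqref{eq:7} at scale $R^{1+\varepsilon}$ (note $I\subset[-R,R]\subset[-R^{1+\varepsilon},R^{1+\varepsilon}]$), one gets a bound $\lesssim (R^{1+\varepsilon})^s\|\Psi_{B_{R^{1+\varepsilon}}^n(x_0)}f\|_{L^p}$, which is $\lesssim R^{s+\varepsilon s}\|\,|\Psi_{B_{R^{1+\varepsilon}}^n(x_0)}|^{1/p}f\|_{L^p}$ up to harmless weights; summing the $p$-th powers over $x_0\in R\Z^n$ recovers $\lesssim R^{s+\varepsilon s}\|f\|_{L^p}$ because each point lies in $O(R^{n\varepsilon})$ of the fattened balls, and this overlap factor is itself $\lesssim R^{C\varepsilon}$.

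For the tail term I would use that it is multiplied by ${\rm RapDec}(R)$, so even after raising to the $p$-th power, integrating in $(x,t)$ over $B_R^n(x_0)\times I$ (a set of measure $\lesssim R^{n+1}$), summing over $x_0$, and absorbing the $\ell$-sum via the rapid decay of the weights $\psi_\ell$, the total contribution is still ${\rm RapDec}(R)\|f\|_{L^p}$, hence negligible compared to $R^{s+\varepsilon}\|f\|_{L^p}$. Combining the two contributions and renaming $\varepsilon$ (replacing $\varepsilon$ by $\varepsilon/(s+C)$ or similar, which only changes the implicit constant) yields \eqref{eq:3}.

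The main obstacle — really just the only nontrivial point — is bookkeeping the overlap and the weight functions when passing from the pointwise estimate \eqref{eq:2} to an $L^p$ estimate summed over the lattice of centers: one must check that the weights $w_{B_R^n(x_0)}$ and the cutoffs $\Psi_{B_{R^{1+\varepsilon}}^n(x_0)}$ sum to something $O(R^{C\varepsilon})$ pointwise, so that their aggregate cost is absorbed into the $R^\varepsilon$ loss we are allowed. This is routine given the rapid decay built into $\psi$ and $w$, but it is where all the care is needed; everything else is a direct application of Lemma \ref{lemma-1} and the hypothesis at the slightly larger scale $R^{1+\varepsilon}$.
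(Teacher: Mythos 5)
Your proposal follows essentially the same route as the paper: tile $\R^n$ by finitely overlapping $R$-balls, apply Lemma \ref{lemma-1} on each ball, estimate the main term by the hypothesis and the overlap of the fattened cutoffs $\Psi_{B_{R^{1+\varepsilon}}^n(x_k)}$, and dispose of the tail by the ${\rm RapDec}(R)$ factor together with the decay of the weights. One correction, though it does not break the argument: you invoke \eqref{eq:7} ``at scale $R^{1+\varepsilon}$,'' but the hypothesis is only assumed at scale $R$ (for balls $B_R^n$ of radius $R$ and arbitrary center), and an estimate at the larger scale is not available to you. Fortunately you never need it: after Lemma \ref{lemma-1} the spatial domain is still $B_R^n(x_0)\times I$, so you apply \eqref{eq:7} as stated (with data $\Psi_{B_{R^{1+\varepsilon}}^n(x_0)}f$, whose Fourier support is only an $O(R^{-1})$-fattening of ${\rm A}(1)$, a standard point both you and the paper gloss over), getting $\lesssim R^{s}\|\Psi_{B_{R^{1+\varepsilon}}^n(x_0)}f\|_{L^p}$; this is exactly what the paper does (with $\varepsilon/10n$ in place of $\varepsilon$ to keep the overlap loss small), and it also removes the superfluous $R^{\varepsilon s}$ factor and the need to rename $\varepsilon$ at the end. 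The rest of your bookkeeping of the overlap count $O(R^{n\varepsilon})$ and of the tail term is fine.
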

	
	\begin{proof}
		Let $\{B_{R}^n(x_k)\}_{k\in \mathbb{Z}^n}$ be a covering of $\R^n$ using balls of radius $R$ with bounded overlaps. We have
		\beqq
		\big\|e^{it\sqrt{-\Delta}}f\big\|_{L_{x,t}^p(\R^n\times I)}^p \leq \sum_{k\in \mathbb Z^n}  \big\|e^{it\sqrt{-\Delta}}f\big\|_{L_{x,t}^p(B_{R}^n(x_k) \times I)} ^p.
		\eeqq
		Using  Lemma \ref{lemma-1}, we get
		\begin{align*}\label{eq:6}
			\big\|e^{it\sqrt{-\Delta}}f\big\|_{L_{x,t}^p ( B_{R}^n(x_k) \times I)}\lesssim_\varepsilon & \big\|e^{it\sqrt{-\Delta}}\big(\Psi_{B_{R^{1+\varepsilon/10n}}^n(x_k)}f\big)\big\|_{L_{x,t}^p (B_{R}^n(x_k) \times I)}\\
			&+{\rm RapDec}(R)\sum_{|\ell|>R^\varepsilon} \|f|\psi_\ell(\cdot-x_k)|^{\f{1}{2}}\|_{L^p(w_{B_{R}^n(x_k)})}.
		\end{align*}

		Summing over $k$, we obtain
		\begin{align}
			\sum_k  \|e^{it\sqrt{-\Delta}}f\|_{L_{x,t}^p (B_{R}^n(x_k) \times I)}^p\lesssim_\varepsilon&  \sum_k \Big(\big\|e^{it\sqrt{-\Delta}}\big(\Psi_{B_{R^{1+\varepsilon/10n}}^n(x_k)}f\big)\big\|_{L_{x,t}^p (B_{R}^n(x_k)\times I)}\Big)^p\\
			&+{\rm RapDec}(R)\sum_k \Big(\sum_{|\ell|>R^\varepsilon} \big\|f|\psi_\ell(\cdot-x_k)|^{\f{1}{2}}\big\|_{L^p(w_{B_{R}^n(x_k)})}\Big)^p.
			\label{eq:61}\end{align}
		It follows from \eqref{eq:7} and the bounded overlaps of $\{B_{R}^n(x_k)\}_k$ that the first term can be estimated by
		\beqq
		\sum_k \Big(\big\|e^{it\sqrt{-\Delta}}\big(\Psi_{B_{R^{1+\varepsilon/10n}}^n(x_k)}f\big)\big\|_{L_{x,t}^p(B_{R}^n(x_k)\times I)}\Big)^p\lesssim R^{sp+\varepsilon p}\|f\|_p^p.
		\eeqq
		To finish the proof, we use Minkowski's inequality to see that the second term satisfies \begin{equation*}
			{\rm RapDec}(R)\sum_k \Big(\sum_{|\ell|>R^\varepsilon} \|f|\psi_\ell(\cdot-x_k)|^{\f{1}{2}}\|_{L^p(w_{B_{R}^n(x_k)})}\Big)^p
			\lesssim{\rm RapDec}(R) \|f\|_{L^p}^p.
		\end{equation*}
	\end{proof}
	
	\subsection{Reducing to the class $\mathbf \Phi(R)$} 
	To prove Theorem \ref{theoa2}, it suffices to show
	\begin{align}\label{eq:301}
		\|e^{it\sqrt{-\Delta}}f\|_{L^p(B^{n}_{R}\times [-R,R])}\leq C_\varepsilon R^{n(\frac{1}{2}-\frac{1}{p})+\varepsilon}\|f\|_{L^p(\R^n)}, \;\quad \; {\rm supp}\; \widehat{f}\subset {\rm A} (1).
	\end{align}
	
	Indeed, by Corollary \ref{lemma-2}, we have
	\begin{align}
		\|e^{it\sqrt{-\Delta}}f\|_{L^p(\R^n\times [-R,R])}\leq C_\varepsilon R^{n(\frac{1}{2}-\frac{1}{p})+\varepsilon}\|f\|_{L^p(\R^n)},\;\;\;{\rm supp}\; \widehat{f}\subset {\rm A}(1).
	\end{align}
	After rescaling, we get
	\begin{align}\label{eq:08}
		\|e^{it\sqrt{-\Delta}}f\|_{L^p(\R^n\times [1,2])}\leq C_\varepsilon R^{n(\frac{1}{2}-\frac{1}{p})-\frac{1}{p}+\varepsilon}\|f\|_{L^p(\R^n)},\;\;\;{\rm supp}\;\widehat{f}\subset {\rm A}(R/2).
	\end{align}
	
	Now we perform the standard Littlewood--Paley decomposition on $f$. Let  $\varphi$ be a radial bump function supported
	on the ball $|\xi|\leq 2$ and equal to 1 on the ball $|\xi|\leq 1$. For $N\in 2^{\mathbb{Z}}$, we define the Littlewood--Paley projection operators by
	\begin{align*}
		&\widehat{P_{\leq N}f}(\xi) := \varphi(\xi/N)\widehat{f}(\xi),
		\\ &\widehat{P_{> N}f}(\xi) :=
		(1-\varphi(\xi/N))\widehat{f}(\xi),
		\\ &\widehat{P_{N}f}(\xi) :=
		(\varphi(\xi/N)-\varphi({2\xi}/{N}))\widehat{f}(\xi).
	\end{align*}
	
	Then we write,
	$$e^{it\sqrt{-\Delta}}f=e^{it\sqrt{-\Delta}}P_{\leq 1}f
	+\sum_{N>1} e^{it\sqrt{-\Delta}} P_N f.$$
	By the fixed-time estimate \eqref{eq-02} we have 
	\beq\label{eq-25}
	\big\|e^{it\sqrt{-\Delta}}P_{\leq 1}f\big\|_{L^p(\R^n\times[1,2])}
	\lesssim \|P_{\leq 1}f\|_{L^p_{s_{p}}(\R^n)}\lesssim  \|f\|_{L^p(\R^n)}.
	\eeq
	By the triangle inequality and \eqref{eq:08}, Theorem \ref{theoa2} is proved.
	
	Recall that $K=K_0 R^{\tilde{\delta}}\ll R^\varepsilon$, where $K_0,\tilde \delta$ will be chosen to satisfy the requirement of the forthcoming argument.
	To prove 
	\begin{align}
		\|e^{it\sqrt{-\Delta}}f\|_{L^p(B^{n}_{R}\times [-R,R])}\leq C_\varepsilon R^{n(\frac{1}{2}-\frac{1}{p})+\varepsilon}\|f\|_{L^p(\R^n)}, \;\quad \; {\rm supp}\; \widehat{f}\subset {\rm A} (1),
	\end{align}
	it suffices to show 
	\begin{align}\label{eq 216}
		\|e^{it\phi_R(D)}f\|_{L^p(B^{n}_{R}\times[-R,R])}\leq C_\varepsilon R^{n(\frac{1}{2}-\frac{1}{p})+\varepsilon}\|f\|_{L^p(\R^n)}, \; \; {\rm supp} \widehat{f}\subset  {\rm N}_{\varepsilon_0}(e_n),
	\end{align}
	where $\phi_R$ is in the class $\mathbf\Phi(R)$.
	
	Indeed, since ${\rm supp} \widehat{f}\subset {\rm A}(1)$, we decompose ${\rm A}(1)$ into a collection of finitely-overlapping sectors $\tau$ of radius $K^{-3}$ in the angular direction. Write $f=\sum_\tau f^\tau$ where $f^\tau$ is Fourier supported in $\tau$. Then
	$$\|e^{it\sqrt{-\Delta}}f\|_{L^p(B^{n}_{R}\times[-R,R])}\leq \sum_\tau \|e^{it\sqrt{-\Delta}}f^\tau\|_{L^p(B^{n}_{R}\times [-R,R])}.$$
	Given $\tau$, let $A_\tau$ be an orthogonal matrix such that $A_\tau {\bf e}_n=\eta_\tau$.
	By changing of variables:
	\beqq
	\xi\longrightarrow A_\tau \xi,
	\eeqq
	we rotate the sector $\tau$ to a sector  centered at $e_n$. Correspondingly, we make another change of variables with respect to $x$, i.e.
	\beqq
	x\longrightarrow  A_\tau x.
	\eeqq
	Then after sending
	\beqq
	x\longrightarrow  x-te_n,
	\eeqq
	correspondingly, the phase function becomes
	\beqq
	x\cdot \xi+t(|\xi|-\xi_n).
	\eeqq
	Finally we perform another change of variables with respect to $x,t,\xi$ as follows
	\beqq
	\xi'\longrightarrow  K^{-3}\xi', \,x'\longrightarrow  K^3x',\,t\longrightarrow   K^6t.
	\eeqq
	We claim that after the above change of variables, the resulting phase function is now in the class $\mathbf\Phi(R)$. Indeed, $\phi_R(\xi)$ is given by
	\beqq
	\phi_R(\xi)=K^6\big(\sqrt{K^{-6}|\xi'|^2+\xi_n^2}-\xi_n\big).\eeqq
	By the homogeneity of the phase function, we have
	\beqq
	\phi_R(\xi)=K^6\xi_n\Big(\big(1+\Big|\tfrac{K^{-3}\xi'}{\xi_n}\Big|^2\big)^{\f{1}{2}}-1\Big)=\frac{\xi_1^2+\ldots+\xi_{n-1}^2}{2\xi_n}+K^{-4}{\rm E}_R(\xi),
	\eeqq
	where 
	\beqq
	{\rm E}_R(\xi)=-\frac{K^{-2}}{2}\frac{|\xi'|^4}{\xi_n^3}\int_0^1(1-s) \Big(1+s K^{-6}\big|\tfrac{\xi'}{\xi_n}\big|^2\Big)^{-\frac{3}{2}}ds .
	\eeqq
	
	For fixed $N_{\rm par}\in\mathbb N,$ by choosing $K_0$ sufficiently large, we have  
	\beqq
	|\partial^\alpha {\rm E}_R(\xi)|<c_{\rm par}, \; 0 \leq |\alpha|\leq N_{{\rm par}}.
	\eeqq
	Thus, it suffices to estimate
	\beqq
	e^{it\phi_R(D)}g:=\int_{\R^{n}} e^{i(x'\cdot\xi'+x_n\xi_n+t\phi_R(\xi))}\widetilde a(\xi',\xi_n)\widehat{g}(\xi', \xi_n) d \xi,
	\eeqq
	where
	\begin{equation*}
		\widehat{g}(\xi)=\widehat{f}^\tau\Big(A_\tau(K^{-3}\xi',\xi_n)\Big).
	\end{equation*}
	A direct calculation shows that ${\rm supp}\;\widehat{g}\subset  {\rm N}_{\varepsilon_0}(e_n)$. We have finished verifying the claim.
	
	Combining the above estimates and \eqref{eq 216},  we have
	\begin{align*}
		\|e^{it\sqrt{-\Delta}}f^\tau\|_{L^p(B_R^{n}\times [-R,R])}&=K^{O(1)}\big\|e^{it\phi_R(D)}g\|_{L^p(B_R^{n}\times [-R,R])}\\
		\lesssim_\varepsilon & R^{n(\f{1}{2}-\f{1}{p})+\varepsilon}\|f\|_{L^p(\R^n)}. \end{align*}
	
	Let $Q_p(R)$ denote the smallest constant such that the following inequality holds for all phase functions $\phi_R$ in the class $\mathbf\Phi(R)$,
	\begin{equation}
		\|e^{it\phi_R(D)}f\|_{L^p(B_R^{n}\times [-R,R])}\leq Q_p(R)R^{n(\f{1}{2}-\f{1}{p})}\|f\|_{L^p},\quad {\rm supp}\; \widehat{f}\subset {\rm N}_{\varepsilon_0}(e_n).
	\end{equation}
	To prove \eqref{eq:301}, it suffices to show 
	$$Q_p(R)\lesssim_\varepsilon R^\varepsilon.$$
	
	
	\begin{remark}
		We want to emphasize that reducing to the class $\mathbf\Phi(R)$ is only needed for proving the narrow decoupling estimate in Section \ref{section-7}. The statements in Section \ref{wavee} through \ref{section-5}, including our $k$-broad ``norm" estimates, hold true for all general phase functions $\phi$ in the class $\mathbf \Phi$. Moreover, in the above reductions, we start with the standard circular cone given by the phase function $|\xi|$, while it can be easily seen that a similar argument works for any phase function satisfying conditions ${\bf H_1}$ and ${\bf H_2}$.  Indeed, one can see from the following formula
		\beqq
		\phi(\xi',\xi_n)-\nabla\phi(e_n)\cdot (\xi',\xi_n)=\f{\langle \partial^2_{\xi'\xi'}\phi(e_n)\xi',\xi'\rangle }{2\xi_n} +\sum_{|\alpha|=3}\frac{3}{\alpha!}\int_0^1 (1-s)^2(\partial^\alpha \phi)(s\tfrac{\xi'}{\xi_n},1)ds\tfrac{(\xi')^\alpha}{\xi_n^2}.
		\eeqq
		Thus our local smoothing bounds in Theorem \ref{theoa2} also hold true for such phase functions.
	\end{remark}
	
	\section{Wave packet decomposition}\label{wavee}
	\subsection{Construction of wave packet} In this section, we present the wave packet decomposition and collect some useful properties that we shall need from \cite{OW}.  In this section and the next, we shall consider a phase function $\phi$ in the bigger class $\mathbf \Phi$. Same arguments would work for any $\phi_R$ in the smaller class $\mathbf \Phi(R)$.

	Fix a large constant $R\gg 1$. We cover the annulus ${\rm A}(1)$ using a collection of $1\times R^{-1/2}\cdots\times R^{-1/2}$ sectors $\nu$ with finite overlaps. Let $\{\psi_\nu\}$ be a smooth partition of unity subordinate to this cover, and write $f=\sum_{\nu} f_\nu$, where $f_\nu:=\psi_\nu f$. 
	
	Next, we further decompose $f_\nu$ on the physical side. Cover $\R^n$ by a collection  of finitely overlapping balls $B_w:=B^n_{R^{{(1+\delta)}/{2}}}(w)$ of radius $R^{\f{1+\delta}{2}}$ centered at $w\in R^{\f{1+\delta}{2}}\mathbb{Z}^n$, where $\delta>0$ is a fixed small constant. Let $\eta_w$ be a smooth partition of unity subordinate to this cover, write $f=\sum_{\nu,w}f_{\nu,w}$, where $f_{\nu,w}:=\big(\eta_w(\psi_\nu f)^{\wedge}\big)^{\vee}.$  For given $\nu,w$, we further decompose the ball  $B_w$  into $R^{(1+\delta)/2}$ plates $\{P_{\nu,w}^\ell\}_{\ell}$  of dimension $1$ in the direction parallel to $\partial_\eta\phi(\xi_\nu)$ and $R^{\f{1+\delta}{2}}$ in all the other directions, 
	where $\xi_\nu \in S^{n-1}$ denotes the direction of the center-line of the sector $\nu$. Let $\eta_{\nu,w}^\ell$ be a smooth partition of unity subordinate to this cover.  We write $f=\sum_{\nu,w,\ell}\big(\eta_{\nu,w}^\ell\eta_w(\psi_\nu f)^{\wedge}\big)^{\vee}.$
	
	Finally, Let $\tilde{\psi}_\nu$ be a smooth function which is essentially support on $\nu$, and $\widetilde \psi_\nu=1$ on the $cR^{-1/2}$-neighborhood of the support of $\psi_\nu$ where $c>0$ is a small constant. Define
	\beqq
	f_{\nu,w}^\ell:=\widetilde \psi_\nu\big(\eta_{\nu,w}^\ell \eta_w(\psi_\nu f)^{\wedge}\big)^{\vee},
	\eeqq
	then it is straight forward to check that
	$$\|f_{\nu,w}^\ell-\big(\eta_{\nu,w}^\ell \eta_w(\psi_\nu f)^{\wedge}\big)^{\vee}\|_{L^\infty}\leq {\rm RapDec}(R)\|f\|_{L^2}.$$
	
	We may decompose $f$ as follows
	\beqq
	f=\sum_{\nu,w,\ell} f_{\nu,w}^\ell+{\rm RapDec}(R)\|f\|_{L^2}.
	\eeqq
	The functions $\{f_{\nu,w}^\ell\}$ are orthogonal in the sense that: for a set $\mathcal{T}$ of triplets $(\nu,w,\ell)$, we have
	\beqq
	\left\|\sum_{(\nu,w,\ell)\in\mathcal{T}}f_{\nu,w}^\ell\right\|_{L^2}^2\sim \sum_{(\nu,w,\ell)\in\mathcal{T}}\Big\|f_{\nu,w}^\ell\Big\|_{L^2}^2.
	\eeqq
	
	Now we define the associated tube $T_{\nu,w}^\ell$ by 
	\beq\label{wavepa}
	\begin{aligned}
		T_{\nu,w}^\ell:=\big\{(x,t)\in \R^{n+1},|t|\leq R: |\Pi_\nu(x-w_\ell+t\partial_\xi&\phi(\xi_\nu))|\leq CR^\delta,\\ &|\Pi_{\nu^\perp}(x-w_\ell+t\partial_\xi\phi(\xi_\nu))|\leq CR^{\f{1}{2}+\delta}\big\}.
	\end{aligned}
	\eeq
	where $\Pi_\nu,\Pi_{\nu^\perp}$ denote the orthogonal projection operator defined by
	$$\Pi_\nu(\xi):=(\xi\cdot \xi_\nu) \xi_\nu, \; \Pi_{\nu^{\perp}}(\xi):=\xi-\Pi_\nu(\xi).$$
	and $w_\ell\in \R^n$ is the center of the plate $P_{\nu,w}^\ell$. Define $${\bf L}(\nu):=\frac{1}{\sqrt{1+|\nabla\phi(\xi_\nu)|^2}}(-\nabla\phi(\xi_\nu),1).$$ From \eqref{wavepa}, one can see that $T_{\nu,w}^\ell$ 
	intersects the hyperplane $t=0$ at $P_{\nu,w}^\ell$ and satisfies 
	$$T_{\nu,w}^\ell \subset N_{CR^\delta}(P_{\nu,w}^\ell+ t{\bf L}(\nu)), \;\; |t|\leq CR.$$
	
	We define the extension operator associated to the general cone $(\xi, \phi(\xi))$ by 
	\beqq
	Ef(x,t):=\int_{{\rm A}(1)}e^{i(x\cdot \xi+t\phi(\xi))}f(\xi)d \xi.
	\eeqq
	
	The following lemma shows that each wave packect $Ef_{\nu,w}^\ell$ is essentially localized to the tube $T_{\nu,w}^\ell$ in physical space.
	\begin{lemma}\label{lew}
		If $(x,t)\in B_R^{n+1}(0)\backslash T_{\nu,w}^\ell$, then
		\beqq
		|Ef_{\nu,w}^\ell(x,t)|\leq {\rm RapDec}(R)\|f\|_{L^2}.
		\eeqq
	\end{lemma}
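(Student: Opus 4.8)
The plan is to prove Lemma \ref{lew} by a standard stationary/non-stationary phase argument, exploiting the fact that the wave packet $Ef_{\nu,w}^\ell$ is built from a function with tightly localized Fourier support and tightly localized physical support (via the cutoffs $\eta_w$ and $\eta_{\nu,w}^\ell$). First I would write out
\beqq
Ef_{\nu,w}^\ell(x,t)=\int_{{\rm A}(1)} e^{i(x\cdot\xi+t\phi(\xi))}\widetilde\psi_\nu(\xi)\big(\eta_{\nu,w}^\ell\eta_w(\psi_\nu f)^\wedge\big)^\vee(\xi)\,d\xi,
\eeqq
and then replace the factor $\big(\eta_{\nu,w}^\ell\eta_w(\psi_\nu f)^\wedge\big)^\vee$ by its definition as a convolution, so that one is left with an oscillatory integral in $\xi$ with amplitude supported in the sector $\nu$ (of dimensions $1\times R^{-1/2}\times\cdots\times R^{-1/2}$) and phase $x\cdot\xi+t\phi(\xi)-y\cdot\xi$, where $y$ ranges over the physical support of the relevant cutoffs, i.e. $y$ lies in the plate $P_{\nu,w}^\ell$ up to rapidly decaying tails.

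The heart of the matter is to show that when $(x,t)\notin T_{\nu,w}^\ell$, the gradient in $\xi$ of the phase is large relative to the reciprocal of the dimensions of the sector $\nu$, so that repeated integration by parts in $\xi$ gains arbitrarily many powers of $R^{-1}$. Concretely, I would Taylor-expand $\phi$ around $\xi_\nu$: writing $\xi=\xi_\nu+\theta$ with $\theta$ decomposed into its component $\theta_\parallel$ along $\xi_\nu$ (size $O(1)$) and its component $\theta_\perp$ transverse (size $O(R^{-1/2})$), the phase gradient is
\beqq
\nabla_\xi\big(x\cdot\xi+t\phi(\xi)-y\cdot\xi\big)=\big(x-y+t\nabla\phi(\xi_\nu)\big)+t\,\partial^2\phi(\xi_\nu)\theta+O(t|\theta|^2).
\eeqq
Using the definition \eqref{wavepa} of $T_{\nu,w}^\ell$ via the projections $\Pi_\nu,\Pi_{\nu^\perp}$ and the center $w_\ell$ of $P_{\nu,w}^\ell$: if $(x,t)\notin T_{\nu,w}^\ell$ then either $|\Pi_\nu(x-w_\ell+t\nabla\phi(\xi_\nu))|>CR^\delta$ or $|\Pi_{\nu^\perp}(x-w_\ell+t\nabla\phi(\xi_\nu))|>CR^{1/2+\delta}$. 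In the first case one integrates by parts in the $\theta_\parallel$-direction (where the sector has length $1$), gaining a factor $R^{-\delta}$ per step; in the second case one integrates by parts in the $\theta_\perp$-directions (where the sector has length $R^{-1/2}$), the derivative landing on the $R^{-1/2}$-scaled bump produces $R^{1/2}$ but this is beaten by the $R^{1/2+\delta}$ lower bound on the phase gradient, again gaining $R^{-\delta}$ per step. One must check the curvature/error terms $t\,\partial^2\phi(\xi_\nu)\theta$ and $O(t|\theta|^2)$ do not spoil this: since $|t|\le R$, $|\theta_\perp|\lesssim R^{-1/2}$ and $\partial^2\phi$ is bounded (conditions ${\bf H_1}$, ${\bf H_2}$ and the eigenvalue bounds in the definition of $\mathbf\Phi$), these terms are $O(R^{1/2})$, which is absorbed into the $CR^{1/2+\delta}$ threshold after choosing $C$ large; the component along $\xi_\nu$ of these error terms is smaller still.

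After $N$ integrations by parts the integrand is bounded by $C_N R^{-N\delta}$ times the $L^1$-mass of finitely many derivatives of the amplitude, which in turn is controlled by $\|f\|_{L^2}$ (using Cauchy--Schwarz over the $O(R^{-(n-1)/2})$-measure sector and the rapidly decaying tails of the physical cutoffs beyond $P_{\nu,w}^\ell$, which contribute only ${\rm RapDec}(R)\|f\|_{L^2}$). Choosing $N$ large depending on the target power of $R$ gives the claim. The main obstacle, and the only point requiring genuine care rather than bookkeeping, is verifying uniformly over the class $\mathbf\Phi$ that the curvature and cubic error terms in the Taylor expansion of $\nabla\phi$ stay below the $R^{1/2+\delta}$ and $R^{\delta}$ thresholds for $|t|\le R$ — this is where the homogeneity ${\bf H_1}$, the two-sided eigenvalue bounds, and the derivative bounds \eqref{assu} on $\mathbf\Phi$ are used, and it is what makes the geometry of $T_{\nu,w}^\ell$ (a $R^\delta$-neighborhood of a tilted plate, rather than an exact level set) the right object. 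Everything else is the routine non-stationary phase estimate already carried out in \cite{OW}, which I would cite for the details.
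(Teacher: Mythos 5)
Your proposal is correct and coincides with the paper's approach: the paper gives no details here, simply noting that the lemma is standard and follows by slightly modifying the proof of Lemma 2.1 in \cite{OW}, and that proof is exactly the non-stationary phase/integration-by-parts argument you describe (large phase gradient off the tube in the parallel or transverse directions, beating the $R^{1/2}$ loss from differentiating the $R^{-1/2}$-scale cutoff, with the class-$\mathbf\Phi$ derivative and Hessian bounds controlling the curvature errors uniformly). Your write-up in fact supplies more detail than the paper does, so there is nothing to correct.
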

	
	The proof of Lemma \ref{lew} is standard, and for instance, can be obtained by slightly modifying the proof of Lemma 2.1 in \cite{OW}.
	
	\subsection{Comparing wave packet at different scales}
	
	Suppose $B_\rho^{n+1}(y)\subset B_R^{n+1}(0)$ for some radius $R^{1/2}<\rho<R$. We need to decompose $f$ into wave packets over the ball $B_\rho^{n+1}(y)$ at this smaller spatial scale $\rho$. 
	
	We apply a transformation $z=y+\tilde z$ to recenter $B_\rho^{n+1}(y)$, here $z:=(x,t), \tilde{z}:=(\tilde x,\tilde t)$. Define
	\beqq
	\phi_y(\xi):=y'\cdot \xi+y_{n+1}\phi(\xi),\quad y=(y',y_{n+1})
	\eeqq
	then,
	$$Ef(z)=E\tilde f(\tilde z),$$
	where $\tilde f(\eta)=e^{i\phi_y(\eta)}f(\eta).$
	We now perform wave packet decomposition with respect to $\tilde f$ at scale $\rho$. Following the construction in the last section, we write
	\beqq
	\tilde f=\sum_{\tn,\tw,\tl} \tilde{f}_{\tn,\tw}^{\tilde{\ell}},
	\eeqq
	where $\tn \subset {\rm A}(1)$ is a sector of width about $\rho^{-1/2}$ in the angular direction and length about 1 in the radial direction, $\tilde{w}\in \rho^{\f{1+\delta}{2}}\mathbb{Z}^n$. The Fourier support of $\tilde{f}_{\tn,\tw}^{\tl}$ is essentially contained in  a thin plate  $P_{\tn,\tw}^{\tl}$ of side length $\rho^{1/2+\delta}$ and thickness $\rho^{\delta}$ in the ball of radius $\rho^{1/2+\delta}$ centered at $\tw$. $E\tilde f_{\tn,\tw}^{\tl}$ is essentially supported in the tube $\widetilde{T}_{\tn, \tw}^{\tl}$ with 
	\beqq
	P_{\tn,\tw}^{\tilde{\ell}}+\f{R}{C}{\bf L}(\tn)\subset \widetilde{T}_{\tn,\tw}^{\tilde{\ell}}\subset P_{\tn,\tw}^{\tilde{\ell}}+CR{\bf L}(\tn),\;\; C>0 \;\text{sufficiently large}.
	\eeqq
	
	A natural question then appears: how this new wave packet decomposition at a smaller scale $\rho$, $\tilde f=\sum_{\tn,\tw,\tl}\tilde f_{\tn,\tw}^{\tl}$,  relates to the original wave packet decomposition $f=\sum_{\nu,w,\ell}f_{\nu,w}^\ell$ at scale $R$? To be more precise, for a given $(\nu,w,\ell)$,  which $(\tn,\tw,\tl)$ contributes significantly to the wave packet $f_{\nu,w}^\ell$? To answer this question, we first define
	$$\widetilde{\mathbb{T}}_{\nu,w,\ell}:=\{(\tn,\tw,\tl): {\rm Ang}(\nu, \tn)\lesssim \rho^{-1/2}, {\rm Dist}(P_{\tn, \tw}^{\tl}, P_{\nu,w}^\ell+P_{\nu}-\partial_\eta\phi_y(\xi_\nu))\lesssim R^\delta\}.$$
	where $P_\nu$ is given by
	$$P_\nu:=\{x \in \R^{n}: |\Pi_\nu(x)|\leq CR^\delta, |\Pi_{\nu^\perp}(x)|\leq CR^{\f{1+\delta}{2}}\}.$$
	The following lemma shows the relationship between wave packet decomposition at different scales.
	\begin{lemma}\label{le:le}
		$\big(f_{\nu,w}^\ell\big)^{\widetilde{}}$ is essentially made of small wave packets from $\widetilde{\mathbb{T}}_{\nu,w,\ell}$. In other words,  \beqq
		\big(f_{\nu,w}^\ell\big)^{\widetilde{}}=\sum_{(\tn,\tw,\tl)\in \widetilde{\mathbb{T}}_{\nu,w,\ell}}\Big(\big(f_{\nu,w}^\ell\big)^{\widetilde{}}\Big)_{\tn,\tw}^{\tl}+{\rm RapDec}(R)\|f\|_{L^2}.
		\eeqq
	\end{lemma}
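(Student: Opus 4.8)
The plan is to unpack what it means for a wave packet to be ``essentially made of'' small wave packets from the index set $\widetilde{\mathbb{T}}_{\nu,w,\ell}$, and then to trace through how the spatial and frequency supports transform under the recentering $z=y+\tilde z$. First I would recall that $\big(f_{\nu,w}^\ell\big)^{\widetilde{}}(\eta)=e^{i\phi_y(\eta)}f_{\nu,w}^\ell(\eta)$, so multiplication by the unimodular factor $e^{i\phi_y(\eta)}$ does not change the frequency support: the Fourier support of $\big(f_{\nu,w}^\ell\big)^{\widetilde{}}$ is still essentially $\widetilde\psi_\nu\cdot\nu$, a sector of angular width $\sim R^{-1/2}$. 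Since $\rho<R$, this sector is contained in a union of $O(1)$ of the smaller sectors $\tn$ of angular width $\rho^{-1/2}$ which satisfy ${\rm Ang}(\nu,\tn)\lesssim\rho^{-1/2}$; the other $\tn$'s contribute only a ${\rm RapDec}(R)$ error because the smooth cutoffs $\psi_{\tn}$ in the $\rho$-scale decomposition are supported away from the support of $\widetilde\psi_\nu\psi_\nu$ up to rapidly decaying tails. This handles the angular condition in the definition of $\widetilde{\mathbb{T}}_{\nu,w,\ell}$.

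Next I would pin down the spatial localization of $E\big(f_{\nu,w}^\ell\big)^{\widetilde{}}=Ef_{\nu,w}^\ell(y+\cdot)$. By Lemma~\ref{lew}, $Ef_{\nu,w}^\ell$ is, up to ${\rm RapDec}(R)$, supported in the tube $T_{\nu,w}^\ell\subset N_{CR^\delta}(P_{\nu,w}^\ell+t{\bf L}(\nu))$. Recentering at $y=(y',y_{n+1})$ and restricting to $B_\rho^{n+1}(y)$, the relevant part of this tube, when expressed in the $\tilde z$ variable, is an $R^\delta$-neighborhood of a segment of a line in direction ${\bf L}(\nu)$ passing through the point obtained by translating $P_{\nu,w}^\ell$ by $-y'+y_{n+1}(-\nabla\phi(\xi_\nu))$ (up to normalization) — this is precisely the content of the definition $\phi_y(\xi)=y'\cdot\xi+y_{n+1}\phi(\xi)$ and the shifted plate $P_{\nu,w}^\ell+P_\nu-\partial_\eta\phi_y(\xi_\nu)$. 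In the $\rho$-scale decomposition, $E\tilde f_{\tn,\tw}^{\tl}$ lives on $\widetilde T_{\tn,\tw}^{\tl}$, an $R^\delta$-neighborhood (thickness $\rho^\delta\leq R^\delta$) of $P_{\tn,\tw}^{\tl}+t{\bf L}(\tn)$; since ${\bf L}(\tn)$ and ${\bf L}(\nu)$ are within $\rho^{-1/2}$ of one another over a time interval of length $\lesssim R$, the only $(\tw,\tl)$ whose tubes can overlap the recentered $T_{\nu,w}^\ell$ on $B_\rho^{n+1}(y)$ are exactly those with ${\rm Dist}(P_{\tn,\tw}^{\tl},P_{\nu,w}^\ell+P_\nu-\partial_\eta\phi_y(\xi_\nu))\lesssim R^\delta$. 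For $(\tn,\tw,\tl)\notin\widetilde{\mathbb{T}}_{\nu,w,\ell}$ one then shows $\big(\big(f_{\nu,w}^\ell\big)^{\widetilde{}}\big)_{\tn,\tw}^{\tl}$ is ${\rm RapDec}(R)\|f\|_{L^2}$: either its frequency cutoff kills it (angular failure, via the disjoint-support argument above) or, using Lemma~\ref{lew} at scale $\rho$ in the contrapositive together with the spatial support statement for $f_{\nu,w}^\ell$, one sees the spatial overlap is empty up to rapidly decaying tails, and the $L^\infty\to L^2$ nature of the tail bounds upgrades this to an $L^2$-type ${\rm RapDec}$ estimate after summing the $O(R^{O(1)})$ indices.

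The main obstacle, I expect, is the bookkeeping of the affine shift $-\partial_\eta\phi_y(\xi_\nu)=-y'-y_{n+1}\nabla\phi(\xi_\nu)$ and verifying that it lines up exactly with the geometric displacement of the tube $T_{\nu,w}^\ell$ when we restrict it to $B_\rho^{n+1}(y)$ and read off its trace in the $\tilde z$ coordinates; one must be careful that the plate $P_\nu$ is the right ``fattening'' to absorb the ambiguity in where along $T_{\nu,w}^\ell$ the ball $B_\rho^{n+1}(y)$ sits, and that the tube directions ${\bf L}(\nu)$ versus ${\bf L}(\tn)$ genuinely stay within the required tolerance over the full time scale — this is where the convexity hypothesis ${\bf H_2}$ on $\phi$ (uniform bounds on $\nabla\phi$ and its derivatives on ${\rm N}_{\varepsilon_0}(e_n)$, which is guaranteed for $\phi\in\mathbf\Phi$) enters. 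Once the geometry is matched, the analytic part is the same standard non-stationary phase / rapid decay argument used for Lemma~\ref{lew}, applied on both scales simultaneously, and the $L^2$ orthogonality of the $\{f_{\nu,w}^\ell\}$ (and of the $\{\tilde f_{\tn,\tw}^{\tl}\}$) lets us control the finitely many error terms. This is exactly the analogue of the corresponding lemma in \cite{OW}, adapted to a general phase function $\phi\in\mathbf\Phi$ rather than the circular cone, so I would model the write-up on that argument, emphasizing only the points where the lack of rotational symmetry forces us to track ${\bf L}(\nu)$ and $\partial_\eta\phi_y(\xi_\nu)$ explicitly.
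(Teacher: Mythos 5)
The paper itself states this lemma without proof (it is imported from the wave packet machinery of \cite{OW}, adapted to $\phi\in\mathbf\Phi$), so your attempt has to be measured against the standard argument. Your skeleton is the right one: the tilde operation is multiplication by the unimodular factor $e^{i\phi_y(\eta)}$, so the $\eta$-support of $\big(f_{\nu,w}^\ell\big)^{\widetilde{}}$ is still the $\sim R^{-1/2}$ sector attached to $\nu$, which meets only $O(1)$ sectors $\tn$ at scale $\rho^{-1/2}$, all with ${\rm Ang}(\nu,\tn)\lesssim\rho^{-1/2}$; the plate condition must come from a physical-side localization; and the error is collected by non-stationary phase over the $R^{O(1)}$ remaining indices. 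The angular half of your argument is fine.

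The gap is in your mechanism for the plate condition. You assert that the only small tubes which can meet the recentered $T_{\nu,w}^\ell$ inside $B_\rho^{n+1}(y)$ are ``exactly those'' with ${\rm Dist}(P_{\tn,\tw}^{\tl},\,P_{\nu,w}^\ell+P_\nu-\partial_\eta\phi_y(\xi_\nu))\lesssim R^\delta$. This is quantitatively false: with angular tolerance $\rho^{-1/2}$ and time extent $\sim\rho$ (or $R$, as you wrote), space-time overlap only pins the $\tilde t=0$ plate of the small tube to within $\sim\rho^{1/2+\delta}$ of the corresponding slice of the big tube, and $\rho^{1/2}\gg R^\delta$ in the relevant regime $\rho\sim R/K^2$. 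More importantly, tube overlap is not the criterion that governs the lemma at all: what must be shown is that the coefficients $\big(\big(f_{\nu,w}^\ell\big)^{\widetilde{}}\big)_{\tn,\tw}^{\tl}$ of the $\rho$-scale decomposition are ${\rm RapDec}(R)$ outside $\widetilde{\mathbb T}_{\nu,w,\ell}$, and these are produced by applying the cutoffs $\eta_{\tw},\eta_{\tn,\tw}^{\tl}$ to $(\psi_{\tn}\tilde g)^{\wedge}$, i.e.\ they are controlled by where the physical-side transform of $\tilde g=e^{i\phi_y}f_{\nu,w}^\ell$ concentrates. The decisive point, which your write-up gestures at but never uses, is the identity $(\tilde g)^{\wedge}(x)=Ef_{\nu,w}^\ell(\pm x+y',\,y_{n+1})$ up to normalization: conjugation by $e^{i\phi_y}$ evaluates the large wave packet on the single time slice $t=y_{n+1}$, and by Lemma \ref{lew} that slice of $T_{\nu,w}^\ell$, translated by $-y'$, is contained in $w_\ell-\partial_\eta\phi_y(\xi_\nu)+P_\nu\subset P_{\nu,w}^\ell+P_\nu-\partial_\eta\phi_y(\xi_\nu)$ with rapid decay beyond; this single-slice localization is exactly what yields the $R^\delta$ tolerance in the definition of $\widetilde{\mathbb T}_{\nu,w,\ell}$. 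Once the plate condition is obtained this way, cutoffs supported at distance $\gg R^\delta$ from this set pick up only rapidly decaying tails and summing over $R^{O(1)}$ indices gives the stated error. Note also that space-time closeness of the tubes over $B_\rho^{n+1}(y)$ is the content of the subsequent Lemma \ref{tanc}, a consequence for indices already in $\widetilde{\mathbb T}_{\nu,w,\ell}$, not a characterization of that set; as written, your key spatial step argues for the wrong statement and would only give the far weaker $\rho^{1/2}$ localization.
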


	Next, we explore a geometric features of a tube $T_{\tn,\tw}^{\tl}$ with $(\tn,\tw,\tl)\in \widetilde{\mathbb{T}}_{\nu,w,\ell}$.
	
	\begin{lemma}\label{tanc}
		For any $(\tn,\tw,\tl)\in \widetilde{\mathbb{T}}_{\nu,w,\ell}$, there holds
		$${\rm Ang}(\nu, \tn)\lesssim \rho^{-1/2},$$
		and
		$${\rm Dist}([T_{\nu,w}^\ell\cap B_\rho^{n+1}(y)]+2P_\nu, T_{\tn,\tw}^{\tl})\lesssim R^\delta.$$
	\end{lemma}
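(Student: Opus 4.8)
\textbf{Proof proposal for Lemma \ref{tanc}.}

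The plan is to unwind the definitions of the sets $\widetilde{\mathbb T}_{\nu,w,\ell}$ and of the tubes $T_{\nu,w}^\ell$, $T_{\tn,\tw}^{\tl}$, and to track how the affine cores of these tubes are related. First I would record the angular bound, which is immediate: by definition of $\widetilde{\mathbb T}_{\nu,w,\ell}$, every $(\tn,\tw,\tl)$ in it already satisfies ${\rm Ang}(\nu,\tn)\lesssim\rho^{-1/2}$, so there is nothing to prove there except to note the (trivial) consequence ${\rm Ang}(\nu,\tn)\lesssim R^{-1/2}\cdot(R/\rho)^{1/2}$ if one prefers that form. The substance of the lemma is the distance estimate, and for this I would work on the hyperplane $\{t=0\}$ (after the recentering $z=y+\tilde z$ used just before the lemma, so that $B_\rho^{n+1}(y)$ becomes $B_\rho^{n+1}(0)$) and then propagate along the tube direction.

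The key steps, in order: (1) Recall from Section \ref{wavee} that $T_{\nu,w}^\ell\subset N_{CR^\delta}(P_{\nu,w}^\ell+t\mathbf L(\nu))$ for $|t|\le CR$, and similarly $\widetilde T_{\tn,\tw}^{\tl}$ is contained in a $CR^\delta$-neighborhood of $P_{\tn,\tw}^{\tl}+t\mathbf L(\tn)$ (here I use the scale-$\rho$ wave packet data, with $\mathbf L$ defined via $\phi_y$ as in the recentered picture). So it suffices to compare the two affine tubes $P_{\nu,w}^\ell+t\mathbf L(\nu)$ and $P_{\tn,\tw}^{\tl}+t\mathbf L(\tn)$, up to errors of size $R^\delta$. (2) At $t=0$: the defining condition in $\widetilde{\mathbb T}_{\nu,w,\ell}$ says precisely that ${\rm Dist}\big(P_{\tn,\tw}^{\tl},\,P_{\nu,w}^\ell+P_\nu-\partial_\eta\phi_y(\xi_\nu)\big)\lesssim R^\delta$. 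Since $P_\nu$ is a plate of dimensions $R^\delta\times R^{(1+\delta)/2}\times\cdots\times R^{(1+\delta)/2}$ whose long directions are exactly $\nu^\perp$, adding $P_\nu$ to $P_{\nu,w}^\ell$ is the same, up to $R^\delta$ in the short ($\xi_\nu$) direction, as thickening $P_{\nu,w}^\ell$ within its own hyperplane; and the shift by $\partial_\eta\phi_y(\xi_\nu)$ is a fixed translation. Also, because ${\rm Ang}(\nu,\tn)\lesssim\rho^{-1/2}$ and $P_\nu$ has diameter $\le R^{(1+\delta)/2}\le \rho^{(1+\delta)/2}$ (using $\rho>R^{1/2}$, so the relevant transverse scale $\rho^{1/2+\delta}$ dominates $R^{(1+\delta)/2}$ only after a rescaling check — this is one of the points to verify carefully), the slab $P_{\tn,\tw}^{\tl}+2P_\nu$ still lies within $\lesssim R^\delta$ of the same affine set. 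Thus at $t=0$ the cores agree to within $R^\delta$ after the $-\partial_\eta\phi_y(\xi_\nu)$ translation — but note the recentering already absorbs such translations, or rather they are exactly what accounts for the ``$+2P_\nu$'' thickening in the claim. (3) Propagate in $t$: for $|t|\le\rho$ (which is the relevant range since we intersect with $B_\rho^{n+1}(y)$), the two cores drift apart at rate $|\mathbf L(\nu)-\mathbf L(\tn)|\lesssim {\rm Ang}(\nu,\tn)\lesssim\rho^{-1/2}$, so over a time interval of length $\rho$ the accumulated separation is $\lesssim\rho^{1/2}\lesssim\rho^{1/2+\delta}$, which is within the transverse thickness of $T_{\tn,\tw}^{\tl}$ and hence can be absorbed (it does not spoil an $R^\delta$-neighborhood statement once one recalls that we only need containment in the fattened tube $T_{\tn,\tw}^{\tl}$, whose transverse width is $\rho^{1/2+\delta}$, not merely its $R^\delta$-core). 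Combining (1)–(3), every point of $[T_{\nu,w}^\ell\cap B_\rho^{n+1}(y)]+2P_\nu$ lies within $CR^\delta$ of $T_{\tn,\tw}^{\tl}$, which is the claim.

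The main obstacle I anticipate is bookkeeping the precise geometry of the plates $P_{\nu,w}^\ell$, $P_{\tn,\tw}^{\tl}$ and the auxiliary plate $P_\nu$ under the recentering $z=y+\tilde z$ and the induced phase change $\phi\rightsquigarrow\phi_y$: one must check that $\partial_\eta\phi_y(\xi_\nu)$ is the correct translation vector so that the $t=0$ slices of the scale-$R$ tube (restricted to $B_\rho$) and the scale-$\rho$ tube line up, and that the long/short axes of $P_\nu$ are aligned with $\nu^\perp/\nu$ in exactly the way needed for ``$+2P_\nu$'' to convert the scale-$R$ transverse thickness $R^{(1+\delta)/2}$ into something comparable with the scale-$\rho$ thickness $\rho^{1/2+\delta}$ after intersecting with $B_\rho$. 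This is essentially the content of the analogous lemma in \cite{OW} for the circular cone; here one has to make sure none of the constants or directions depend on the symmetry of the cone, only on $\phi\in\mathbf\Phi$ and the bounds ${\bf H_1}$, ${\bf H_2}$ and \eqref{assu}. Once the dictionary between the two wave packet decompositions is set up correctly, steps (1)–(3) are routine triangle-inequality estimates.
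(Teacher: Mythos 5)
First, a framing remark: the paper gives no proof of Lemma \ref{tanc} at all — it is quoted from the wave-packet-comparison machinery of \cite{Guth,OW}, adapted to $\phi\in\mathbf\Phi$ — so your proposal is measured against that standard argument. Your skeleton matches it: the angular bound is definitional; the substance is to compare the $\tilde t=0$ plates via the condition ${\rm Dist}(P_{\tn,\tw}^{\tl},\,P_{\nu,w}^\ell+P_\nu-\partial_\eta\phi_y(\xi_\nu))\lesssim R^\delta$ built into $\widetilde{\mathbb T}_{\nu,w,\ell}$, and then to propagate along the tube using $|{\bf L}(\nu)-{\bf L}(\tn)|\lesssim{\rm Ang}(\nu,\tn)\lesssim\rho^{-1/2}$, with the identification of $-\partial_\eta\phi_y(\xi_\nu)$ as the displacement of the $R$-scale core to the recentered slice. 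All of that is the right plan and survives the passage from $|\xi|$ to general $\phi\in\mathbf\Phi$, since only Hessian bounds are used.

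The genuine problem is that you establish (and state) the approximation in the wrong direction, and that direction matters. Your conclusion is that every point of $[T_{\nu,w}^\ell\cap B_\rho^{n+1}(y)]+2P_\nu$ lies within $CR^\delta$ of $T_{\tn,\tw}^{\tl}$. This is false: the left-hand set has transverse width $\sim R^{1/2+\delta}$ (the width of $T_{\nu,w}^\ell$ plus the width $R^{(1+\delta)/2}$ of $2P_\nu$), while the $\rho$-scale tube has transverse width only $\rho^{1/2+\delta}$ with $\rho\le R$, so points near the edge of the fattened big tube sit at distance $\sim R^{1/2+\delta}\gg R^\delta$ from $T_{\tn,\tw}^{\tl}$. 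The usable content of the lemma — the only version that later yields that the $\rho$-scale tubes inherit tangency to $Y$, resp.\ $Z+b$, inside $B_j$ — is the opposite inclusion: every point of $T_{\tn,\tw}^{\tl}$ lies within $O(R^\delta)$ of $[T_{\nu,w}^\ell\cap B_\rho^{n+1}(y)]+2P_\nu$. Correspondingly, the drift $|t|\,|{\bf L}(\nu)-{\bf L}(\tn)|\lesssim\rho\cdot\rho^{-1/2}\le R^{1/2}$ over $|t|\lesssim\rho$ must be absorbed by the fattening $2P_\nu$, whose transverse width $R^{(1+\delta)/2}$ dominates $\rho^{1/2}$ because $\rho\le R$ — not by ``the transverse thickness of $T_{\tn,\tw}^{\tl}$'' as in your step (3). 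Relatedly, your inequality $R^{(1+\delta)/2}\le\rho^{(1+\delta)/2}$ is backwards, and nothing requires $\rho^{1/2+\delta}$ to dominate $R^{(1+\delta)/2}$. With the direction fixed — take a point of $T_{\tn,\tw}^{\tl}$, write it as a point of $P_{\tn,\tw}^{\tl}$ plus $t{\bf L}(\tn)+O(R^\delta)$, apply the plate condition at $\tilde t=0$, then replace ${\bf L}(\tn)$ by ${\bf L}(\nu)$ at cost $\lesssim\rho^{1/2}\le R^{(1+\delta)/2}$ absorbed into $2P_\nu$ — the rest of your bookkeeping (recentering, the role of $\phi_y$, uniformity in $\phi\in\mathbf\Phi$) does recover the lemma.
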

	The proof of Lemma \ref{le:le} and Lemma \ref{tanc} are similar to that of Lemma 5.3 and Lemma 5.4 in \cite{OW}. We omit the proof here.
	
	Next, we will group large and small wave packets into different sub-collections. Let $\tn_0$ be a sector in ${\rm A}(1)$ of dimensions $\rho^{-1/2}$ in the angular direction and $\sim 1$ in the radial direction, and $w_0 \in R^{\f{1+\delta}{2}}\mathbb{Z}\cap B(0,\rho)$. We define the set $\widetilde{\mathbb{T}}_{\tn_0,w_0}, \mathbb{T}_{\tn_0,w_0}$ respectively  as follows:
	$$\widetilde{\mathbb{T}}_{\tn_0,w_0}:=\{(\tn,\tw,\tl): {\rm Ang}(\tn,\tn_0)\lesssim \rho^{-1/2},\quad P_{\tn,\tw}^{\tl}\subset B(w_0,R^{1/2+2\delta})\}.$$
	and
	$$\mathbb{T}_{\tn_0,w_0}:=\big\{(\nu,w,\ell): {\rm Ang}(\nu, \tn_0)\lesssim \rho^{-1/2},\;T_{\nu,w}^\ell \cap B_\rho^{n+1}(y)\subset B(w_0,R^{1/2+2\delta})+\rho {\bf L}(\tn_0)+\{y\}\big\}.$$
	
	For any given $\tn_0,w_0$ and function $g$,  we define $\widetilde{g}_{\tn_0,w_0}$ and $g_{\tn_0,w_0}$ respectively as follows:
	\beqq
	\widetilde{g}_{\tn_0,w_0}:=\sum_{(\tn,\tw,\tl)\in \widetilde{\mathbb{T}}_{\tn_0,w_0}}\tilde{g}_{\tn,\tw}^{\tl}, \; g_{\tn_0,w_0}:=\sum_{(\nu,w,\ell)\in \mathbb{T}_{\tn_0,w_0} }g_{\nu,w}^\ell.
	\eeqq
	Correspondingly, we have the wave packets decomposition for $g$ and $\tilde g$ in the sense that
	\beqq
	g=\sum_{(\tn_0,w_0)}g_{\tn_0,w_0}+{\rm RapDec}(R)\|f\|_{L^2},\quad \tilde{g}:=\sum_{(\tn_0,w_0)}\tilde{g}_{\tn_0,w_0}+{\rm RapDec}(R)\|f\|_{L^2}.
	\eeqq
	Furthermore, we have the following $L^2$-orthogonality property.
	\beqq
	\|g\|_{L^2}^2\sim \sum_{(\tn_0,w_0)}\|g_{\tn_0,w_0}\|_{L^2}^2,\quad \|\tilde{g}\|_{L^2}^2\sim \sum_{(\tn_0,w_0)}\|\tilde{g}_{\tn_0,w_0}\|_{L^2}^2.
	\eeqq

	With the definition above, for any $(\tn_0,w_0)$,  these two collections $\widetilde{\mathbb{T}}_{\tn_0,w_0}, \mathbb{T}_{\tn_0,w_0}$ are related in the sense that
	\beqq
	\widetilde{\mathbb{T}}_{\tn_0,w_0}=\bigcup_{(\nu,w,\ell) \in \mathbb{T}_{\tn_0,w_0}}\widetilde{\mathbb{T}}_{\nu,w,\ell}.
	\eeqq
	Finally, we have
	\begin{lemma}
		If $g$ is concentrated on large wave packets in $\mathbb{T}_{\tn_0,w_0}$, then $\tilde g$ is concentrated on small wave packets in $\widetilde{\mathbb T}_{\tn_0,w_0}$. On the other hand, if $\tilde g$ is concentrated on small wave packets in $\widetilde{\mathbb{T}}_{\tn_0,w_0}$, then $g$ is concentrated on large wave packets on $\mathbb{T}_{\tn_0,w_0}$.
	\end{lemma}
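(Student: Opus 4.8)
The plan is to deduce this lemma from two facts already recorded in this section: the preceding lemma, which says that for every large wave packet index $(\nu,w,\ell)$ one has $\big(f_{\nu,w}^\ell\big)^{\widetilde{}}=\sum_{(\tn,\tw,\tl)\in\widetilde{\mathbb{T}}_{\nu,w,\ell}}\big(\big(f_{\nu,w}^\ell\big)^{\widetilde{}}\big)_{\tn,\tw}^{\tl}+{\rm RapDec}(R)\|f\|_{L^2}$, and the set identity $\widetilde{\mathbb{T}}_{\tn_0,w_0}=\bigcup_{(\nu,w,\ell)\in\mathbb{T}_{\tn_0,w_0}}\widetilde{\mathbb{T}}_{\nu,w,\ell}$. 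Here ``$h$ is concentrated on wave packets in $\mathcal{S}$'' means $h=\sum_{\iota\in\mathcal{S}}h_\iota+{\rm RapDec}(R)\|f\|_{L^2}$, with $h_\iota$ the wave packet piece of $h$ carrying index $\iota$; by the (essentially orthogonal) smooth partitions of unity building the packets this is equivalent to the wave packet projection of $h$ at any index outside $\mathcal{S}$ being ${\rm RapDec}(R)\|f\|_{L^2}$. Since there are only $R^{O(1)}$ wave packets, a sum of $R^{O(1)}$ errors of size ${\rm RapDec}(R)\|f\|_{L^2}$ is again of that size, and I use this silently.

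For the forward implication I would argue directly. Starting from $g=\sum_{(\nu,w,\ell)\in\mathbb{T}_{\tn_0,w_0}}g_{\nu,w}^\ell+{\rm RapDec}(R)\|f\|_{L^2}$, apply the linear norm-preserving transformation $h\mapsto\tilde h$ (pointwise multiplication on frequency space by the unimodular factor $e^{i\phi_y}$) to get $\tilde g=\sum_{(\nu,w,\ell)\in\mathbb{T}_{\tn_0,w_0}}\big(g_{\nu,w}^\ell\big)^{\widetilde{}}+{\rm RapDec}(R)\|f\|_{L^2}$. Invoke the preceding lemma on each summand, interchange the two finite sums, and appeal to the set identity: every small wave packet index that appears lies in $\widetilde{\mathbb{T}}_{\tn_0,w_0}$. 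Testing the resulting representation of $\tilde g$ against the small wave packet projection at an index outside $\widetilde{\mathbb{T}}_{\tn_0,w_0}$, and using near-orthogonality of distinct small packet projections, shows that projection is ${\rm RapDec}(R)\|f\|_{L^2}$; thus $\tilde g$ is concentrated on small wave packets in $\widetilde{\mathbb{T}}_{\tn_0,w_0}$.

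For the reverse implication I would run essentially the same argument backwards, now transporting the scale-$\rho$ information back to scale $R$. Concretely, I would use the $L^2$-orthogonality of the groupings, $\|g\|_{L^2}^2\sim\sum_{(\tn_0,w_0)}\|g_{\tn_0,w_0}\|_{L^2}^2$ and $\|\tilde g\|_{L^2}^2\sim\sum_{(\tn_0,w_0)}\|\tilde g_{\tn_0,w_0}\|_{L^2}^2$, recorded just above, to reduce to showing $g_{\tn_0',w_0'}={\rm RapDec}(R)\|f\|_{L^2}$ for every $(\tn_0',w_0')\neq(\tn_0,w_0)$. For each such pair, $g_{\tn_0',w_0'}$ is concentrated on $\mathbb{T}_{\tn_0',w_0'}$ (by its definition and the near-orthogonality of the large packet projections, the definitions being arranged with enough slack --- e.g. $R^{1/2+2\delta}$-balls against $R^{1/2+\delta}$-sized packets --- for this to hold), so by the forward implication $\big(g_{\tn_0',w_0'}\big)^{\widetilde{}}$ is concentrated on small wave packets in $\widetilde{\mathbb{T}}_{\tn_0',w_0'}$; since the families $\{\widetilde{\mathbb{T}}_{\tn_0,w_0}\}$ are boundedly overlapping and $(\tn_0',w_0')\neq(\tn_0,w_0)$, these indices avoid $\widetilde{\mathbb{T}}_{\tn_0,w_0}$, where the hypothesis says $\tilde g$ has no non-RapDec small wave packet. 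Comparing $\sum_{(\tn_0'',w_0'')}\big(g_{\tn_0'',w_0''}\big)^{\widetilde{}}=\tilde g+{\rm RapDec}(R)\|f\|_{L^2}$ with this information, and using $L^2$-orthogonality of the small wave packet decomposition of $\tilde g$, forces $\big(g_{\tn_0',w_0'}\big)^{\widetilde{}}={\rm RapDec}(R)\|f\|_{L^2}$, hence (norm preservation) $g_{\tn_0',w_0'}={\rm RapDec}(R)\|f\|_{L^2}$, and summing gives $g=g_{\tn_0,w_0}+{\rm RapDec}(R)\|f\|_{L^2}$.

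The parts I expect to be routine are the ${\rm RapDec}$ bookkeeping across the $R^{O(1)}$ packets and the near-orthogonality of packet projections at both scales. The genuine point --- largely already isolated in the lemmas cited above --- is the geometric compatibility of the two families of subcollections under the change of scale: the bounded overlap of $\{\mathbb{T}_{\tn_0,w_0}\}$ and of $\{\widetilde{\mathbb{T}}_{\tn_0,w_0}\}$, and the clean set identity $\widetilde{\mathbb{T}}_{\tn_0,w_0}=\bigcup_{(\nu,w,\ell)\in\mathbb{T}_{\tn_0,w_0}}\widetilde{\mathbb{T}}_{\nu,w,\ell}$; this is precisely where the slack exponents $\delta$ and the $R^{1/2+2\delta}$-balls in the definitions are used, and in the reverse implication one must be a little careful that the bounded-overlap multiplicity of the groupings does not spoil the RapDec errors.
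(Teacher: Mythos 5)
The paper itself does not prove this lemma (it is part of the wave packet material quoted from Ou--Wang \cite{OW} and Guth \cite{Guth}), so the comparison is with the standard argument. Your forward implication is exactly that argument and is fine: apply the unimodular frequency-side factor $e^{i\phi_y}$, use the preceding lemma on each $\big(g_{\nu,w}^\ell\big)^{\widetilde{}}$ with $(\nu,w,\ell)\in\mathbb{T}_{\tn_0,w_0}$, and use the containment $\widetilde{\mathbb{T}}_{\nu,w,\ell}\subset\widetilde{\mathbb{T}}_{\tn_0,w_0}$ encoded in the displayed union identity, together with the polynomial count of packets to absorb the ${\rm RapDec}(R)\|f\|_{L^2}$ errors.

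The reverse implication, as you wrote it, has a genuine gap, at two points. First, the assertion that for $(\tn_0',w_0')\neq(\tn_0,w_0)$ the indices in $\widetilde{\mathbb{T}}_{\tn_0',w_0'}$ ``avoid'' $\widetilde{\mathbb{T}}_{\tn_0,w_0}$ because the families are boundedly overlapping is false: bounded overlap is not disjointness, and neighbouring groups (with ${\rm Ang}(\tn_0',\tn_0)\sim\rho^{-1/2}$, $|w_0'-w_0|\sim R^{1/2+2\delta}$) genuinely share a large number of small-packet indices, since all the defining conditions are stated with $\lesssim$. Second, even on the indices that do lie outside $\widetilde{\mathbb{T}}_{\tn_0,w_0}$, the hypothesis only controls $\tilde g_{\tn,\tw}^{\tl}=\sum_{(\tn_0'',w_0'')}\big(\big(g_{\tn_0'',w_0''}\big)^{\widetilde{}}\big)_{\tn,\tw}^{\tl}$; several groups contribute to the same small index, and $L^2$-orthogonality acts across distinct indices, not across groups at a fixed index, so smallness of the sum does not force smallness of each summand --- your final ``forces $\big(g_{\tn_0',w_0'}\big)^{\widetilde{}}={\rm RapDec}(R)\|f\|_{L^2}$'' does not follow (and, because of the overlaps, it is not even the statement you need: the lemma asks that large packets outside $\mathbb{T}_{\tn_0,w_0}$ be negligible, not that every other group vanish). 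The missing ingredient is the converse geometric compatibility, which is \emph{not} contained in the union identity: if $(\nu,w,\ell)$ lies outside (an $O(1)$-enlargement of) $\mathbb{T}_{\tn_0,w_0}$, then $\widetilde{\mathbb{T}}_{\nu,w,\ell}\cap\widetilde{\mathbb{T}}_{\tn_0,w_0}=\varnothing$; this is proved directly from the definitions via the angle triangle inequality and Lemma \ref{tanc}, and it is exactly here that the slack between $R^{1/2+\delta}$-packets and $R^{1/2+2\delta}$-balls is spent. With that statement in hand the reverse direction closes cleanly: set $h:=g-\sum_{(\nu,w,\ell)\in\mathbb{T}_{\tn_0,w_0}}g_{\nu,w}^\ell$; the small-packet content of $\tilde h$ inside $\widetilde{\mathbb{T}}_{\tn_0,w_0}$ is ${\rm RapDec}(R)\|f\|_{L^2}$ by the disjointness just stated, while its content outside is ${\rm RapDec}(R)\|f\|_{L^2}$ by the hypothesis on $\tilde g$ together with your forward implication applied to $\sum_{(\nu,w,\ell)\in\mathbb{T}_{\tn_0,w_0}}g_{\nu,w}^\ell$; almost-orthogonality of small packets then gives $\|h\|_{L^2}={\rm RapDec}(R)\|f\|_{L^2}$, i.e. $g$ is concentrated on $\mathbb{T}_{\tn_0,w_0}$.
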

	
	\section{A Geometric lemma}\label{section-4}
	In this section, we establish a geometric lemma associated with the cone given by $(\xi,\phi(\xi))$, for any given phase functions $\phi$ in the class $\mathbf \Phi.$
	First, let us give a slightly different version of Lemma 5.8 in \cite{OW} for the model case $\phi(\xi)=|\xi|$, which will shed light on the case of general cones.
	\subsection{ A geometric lemma for the circular cone $(\xi,|\xi|)$} For convenience, we will use $\mathcal{C}$ to denote the truncated cone, i.e. 
	$\mathcal{C}:=\{(\xi,|\xi|): 1/2\leq |\xi|\leq 1\}.$
	Let $V\subset \mathbb{R}^{n+1}$ be an $m$ dimensional affine subspace given by
	\beqq
	V:=\big\{(x_1,\cdots, x_{n+1})\in \R^{n+1}:\sum_{j=1}^{n+1}a_{i,j}x_j=b_i,\quad i=1,\cdots,n+1-m\big\}.
	\eeqq
	For a given point $(\xi, |\xi|)\in \mathcal{C}$, the unit normal vector of $\mathcal{C}$ at $(\xi, |\xi|)$ is $\mathbf{n}_\xi=\f{1}{\sqrt{2}}\big(-\f{\xi}{|\xi|},1\big)$. Therefore, all the points on the cone $\mathcal{C}$  of which the normal vectors are parallel to $V$ lie in an affine subspace $\bar V$ defined by
	\beqq
	\bar V:=\{(x_1,\cdots, x_{n+1})\in \R^{n+1}: \sum_{j=1}^{n}a_{i,j}x_j-a_{i,n+1}x_{n+1}=0,\; i=1,\cdots,n+1-m\}.
	\eeqq
	Assume that the unit normal ${\bf n}_\xi$ is parallel to $V$, i.e.
	\beqq
	\sum_{j=1}^na_{i,j}\f{\xi_j}{|\xi|}-a_{i,n+1}=0,\;i=1,\cdots,n+1-m,
	\eeqq
	which implies that
	\begin{equation}\label{maxrank}
		{\rm rank}\left( \begin{array}{ccc}
			a_{1,1} & \cdots &a_{1,n}\\
			\vdots & \ddots & \vdots\\
			a_{n+1-m,1}&\cdots& a_{n+1-m,n}
		\end{array} \right)=n+1-m.
	\end{equation}
	We denote by $V^{-}\subset \R^n$ a subspace defined as
	\beqq
	V^{-}:=\big\{(x_1,\cdots, x_n)\in \R^n: \sum_{j=1}^{n}a_{i,j}x_j=0,\; i=1,\cdots,n+1-m\big\}.
	\eeqq
	From \eqref{maxrank}, we see that $V^{-}$ is an $(m-1)$-dimensional subspace.
	
	For any $m$-dimensional linear subspace $\bar V$, if $\bar V\cap \mathcal{C}\neq \varnothing$, then $\bar V$ intersects the light cone either tangentially or transversally, which is demonstrated in Figure \ref{fig2} 
	\begin{figure}
		\centering
		\begin{subfigure}{.5\textwidth}
			\centering
			\includegraphics[width=1\linewidth]{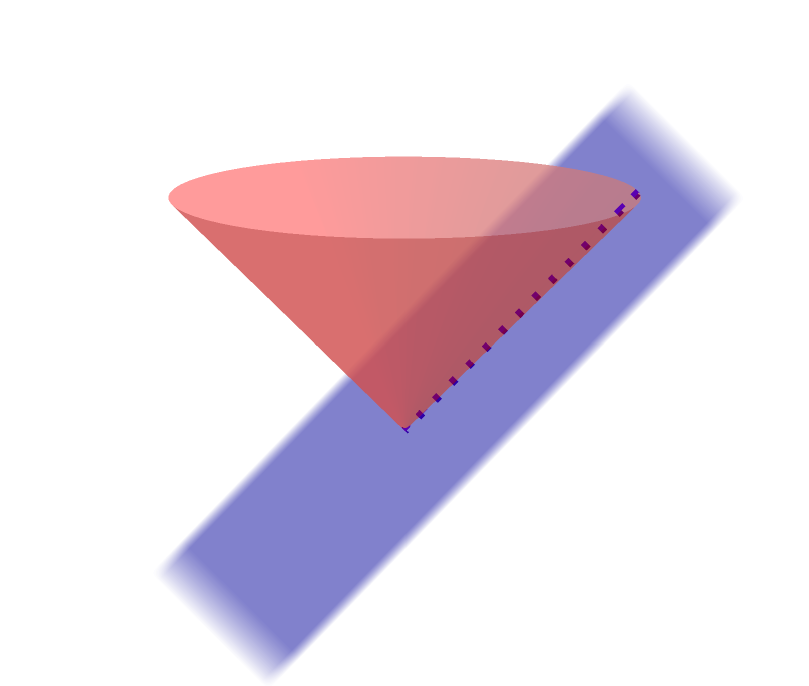}
			\caption*{Tangential intersection}
			\label{fig:sub1}
		\end{subfigure}%
		\begin{subfigure}{.5\textwidth}
			\centering
			\includegraphics[width=1\linewidth]{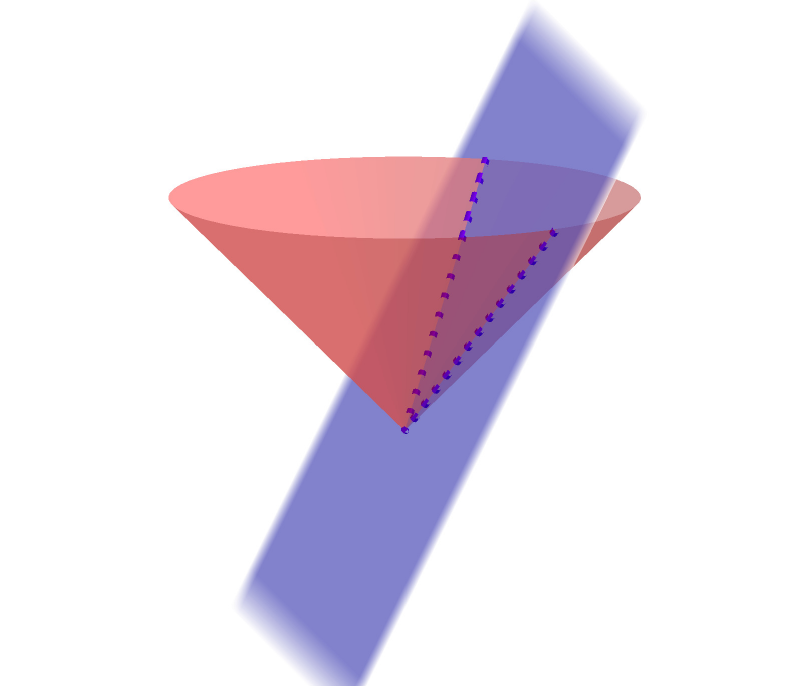}
			\caption*{Transversal intersection}
			\label{fig:sub2}
		\end{subfigure}
		\caption{Tangential and transversal intersections.}
		\label{fig2}
	\end{figure}. Based on the above observation, we start with two geometric lemmas concerning the circular cone, which corresponds to Lemma 5.8 in \cite{OW}.
	\begin{lemma}\label{le-ge}
		Assume $\eta\in S^{n-1}$. If $(\eta,1)\in \bar V$ and ${\rm Ang}(\eta,V^{-})>\f{\pi}{2}-K^{-2}$, then $\bar V\cap \mathcal{C}\subset \{t(\xi,1): t\in \R, \xi\in S^{n-1}, {\rm Ang}(\xi,\eta)\lesssim K^{-2}\}$.
	\end{lemma}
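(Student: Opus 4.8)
\textbf{Proof proposal for Lemma \ref{le-ge}.}

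The plan is to reduce the statement to a quantitative linear-algebra fact about the subspace $\bar V$ and the specific conic geometry, exploiting the explicit defining equations for $\bar V$ and $V^{-}$. First I would introduce, for $i=1,\dots,n+1-m$, the row vectors $\mathbf a_i':=(a_{i,1},\dots,a_{i,n})\in\R^n$ and note that by \eqref{maxrank} these are linearly independent, so that $V^{-}=\{x\in\R^n:\ \mathbf a_i'\cdot x=0,\ i=1,\dots,n+1-m\}$ is precisely the orthogonal complement of $W:=\operatorname{span}\{\mathbf a_1',\dots,\mathbf a_{n+1-m}'\}$. The hypothesis ${\rm Ang}(\eta,V^{-})>\tfrac\pi2-K^{-2}$ is then equivalent to saying that $\eta$ is nearly orthogonal to $(V^{-})^\perp=W$, i.e. the projection of $\eta$ onto $W$ has length $\le \sin(K^{-2})\lesssim K^{-2}$; and since $\{\mathbf a_i'\}$ spans $W$ I would fix an orthonormal basis of $W$ adapted to it and record that $|\mathbf a_i'\cdot\eta|\lesssim K^{-2}|\mathbf a_i'|$ for each $i$ after normalizing the rows (this normalization costs nothing and I would do it at the outset).

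Next I would take an arbitrary point $t(\xi,1)\in\bar V\cap\mathcal C$ with $\xi\in S^{n-1}$, $t\in\R$, and unpack what membership in $\bar V$ means: $\sum_{j=1}^n a_{i,j}(t\xi_j)-a_{i,n+1}(t)=0$, i.e. $t(\mathbf a_i'\cdot\xi-a_{i,n+1})=0$ for all $i$. When $t\neq 0$ this gives $\mathbf a_i'\cdot\xi=a_{i,n+1}$, and applying the same to the point $(\eta,1)\in\bar V$ (taking $t=1$ there) gives $\mathbf a_i'\cdot\eta=a_{i,n+1}$. Subtracting, $\mathbf a_i'\cdot(\xi-\eta)=0$ for every $i$, so $\xi-\eta\in W^\perp=V^{-}$. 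Thus $\xi=\eta+v$ with $v\in V^{-}$, and since $|\xi|=|\eta|=1$ we get $2\langle\eta,v\rangle+|v|^2=0$, hence $|v|=-2\langle\eta,v\rangle/|v|\cdot$ (when $v\neq0$) $=2|\langle\eta,v/|v|\rangle|\le 2\sin({\rm Ang}(\eta,V^-)^{\,c})$, wait — more cleanly, $|v|^2=-2\langle\eta,v\rangle\le 2|\Pi_W\eta||v|$ is wrong since $v\perp W$; instead I write $\langle\eta,v\rangle=\langle\Pi_{W^\perp}\eta,v\rangle$ and bound $|\langle\eta,v\rangle|=|\langle\eta,v\rangle|$ directly: decomposing $\eta=\Pi_W\eta+\Pi_{W^\perp}\eta$ and using $v\in W^\perp$ gives $\langle\eta,v\rangle=\langle\Pi_{W^\perp}\eta,v\rangle$, which is \emph{not} small — so the cancellation $2\langle\eta,v\rangle+|v|^2=0$ must itself force $|v|$ small. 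The correct route: since $\eta\perp V^-$ up to angle $\le K^{-2}$ means $|\Pi_{V^-}\eta|\le\sin(K^{-2})$, and $v\in V^-$, we have $|\langle\eta,v\rangle|=|\langle\Pi_{V^-}\eta,v\rangle|\le\sin(K^{-2})|v|\lesssim K^{-2}|v|$; plugging into $|v|^2=2|\langle\eta,v\rangle|\le 2\sin(K^{-2})|v|$ yields $|v|\lesssim K^{-2}$, hence $|\xi-\eta|\lesssim K^{-2}$ and so ${\rm Ang}(\xi,\eta)\lesssim|\xi-\eta|\lesssim K^{-2}$, which is exactly the claim. The case $t=0$ gives the origin, which trivially lies in the target set (take $\xi=\eta$, $t=0$).

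The main obstacle, and the step I would be most careful about, is getting the dictionary between ${\rm Ang}(\eta,V^{-})>\tfrac\pi2-K^{-2}$ and a usable quantitative orthogonality statement exactly right: ${\rm Ang}(\eta,V^-)$ is the \emph{smallest} angle between $\eta$ and nonzero vectors of $V^-$, so being close to $\tfrac\pi2$ from above means \emph{every} vector of $V^-$ makes angle $\ge\tfrac\pi2-K^{-2}$ with $\eta$, which is precisely $|\langle\eta,\hat v\rangle|\le\sin(K^{-2})$ for all unit $\hat v\in V^-$, i.e. $|\Pi_{V^-}\eta|\le\sin(K^{-2})\le K^{-2}$ — and this is exactly what the argument above consumes. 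Everything else (the row-normalization, the $t\neq0$ vs $t=0$ split, the passage from $|\xi-\eta|$ to ${\rm Ang}(\xi,\eta)$ via $|\xi-\eta|\le{\rm Ang}(\xi,\eta)\le\tfrac\pi2|\xi-\eta|$ on the sphere) is routine. I would also remark that the homogeneity of the cone is what lets me factor out $t$ cleanly, so the argument uses only that $\mathcal C$ is a piece of the exact light cone here; the perturbed general-cone version in the next subsection will require replacing these exact identities by their $O(K^{-4})$-perturbations, but that is the subject of the following lemma, not this one.
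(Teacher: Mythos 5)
Your proposal is correct, but it takes a genuinely different (and arguably cleaner) route than the paper's own proof of this lemma. The paper first uses ${\rm Ang}(\eta,V^{-})>\frac{\pi}{2}-K^{-2}$ to produce an auxiliary unit vector $\bar\eta\in{\rm span}\{\alpha_1,\dots,\alpha_{n+1-m}\}$ with ${\rm Ang}(\eta,\bar\eta)\le K^{-2}$, then exploits $(\eta,1)\in\bar V$ to get $\bar\eta_{n+1}:=\sum_i\lambda_i a_{i,n+1}=\bar\eta\cdot\eta>1-K^{-4}$, and finally, since $(\bar\eta,-\bar\eta_{n+1})\perp\bar V$, reads off $\xi\cdot\bar\eta=|\xi|\,\bar\eta_{n+1}$ for every $(\xi,|\xi|)\in\mathcal{C}\cap\bar V$, whence ${\rm Ang}(\xi,\bar\eta)\lesssim K^{-2}$ and the conclusion follows by adding the two angle bounds. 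You instead subtract the defining equations of $\bar V$ at $t(\hat\xi,1)$ and at $(\eta,1)$ to conclude $\hat\xi-\eta\in V^{-}$, and then combine the exact sphere identity $|\hat\xi-\eta|^2=-2\langle\eta,\hat\xi-\eta\rangle$ with the quantitative near-orthogonality $|{\rm Proj}_{V^{-}}\eta|\le\sin(K^{-2})$, avoiding the auxiliary vector altogether. Notably, your mechanism (the difference of normals lies in $V^{-}$, and a second-order identity then forces it to be small) is exactly the scheme the paper deploys later for general cones in Section \ref{gem}, where $\partial_\xi\phi(\eta)-\partial_\xi\phi(\bar\eta)\in V^{-}$ and a Taylor expansion replaces your exact identity; so your argument is in effect the $\phi(\xi)=|\xi|$ specialization of that more general proof, which is a perfectly valid and somewhat more unified way to obtain the lemma. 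Two small clean-ups: the aside in your write-up asserting that $\langle{\rm Proj}_{W^{\perp}}\eta,v\rangle$ ``is not small'' contradicts your own correct final step, since $W^{\perp}=V^{-}$ and $|{\rm Proj}_{V^{-}}\eta|\le\sin(K^{-2})$ — delete that sentence and keep only the ``correct route''; and since $\mathcal{C}$ is the truncated cone, $t=|\xi|\in[\frac{1}{2},1]$ never vanishes, so the $t=0$ case you discuss does not actually arise.
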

	\begin{proof}
		Let $\alpha_i=(a_{i,1},\cdots, a_{i,n})$. Since ${\rm Ang}(\eta,V^{-})>\f{\pi}{2}-K^{-2}$, there exists a vector $\bar \eta\in S^{n-1}$ such that
		\beq\label{eq:li}
		\bar \eta=(\bar \eta_1,\cdots, \bar \eta_{n})=\sum_{i=1}^{n+1-m}\lambda_i\alpha_i, \lambda_i\in \R, \; {\rm Ang}(\eta,\bar \eta)\leq K^{-2}.
		\eeq
		Note that $(\eta,1)\in \bar V$, we have
		\beqq
		\sum_{j=1}^{n}\Big(\sum_{i=1}^{n+1-m}\lambda_ia_{i,j}\Big)\eta_j-\Big(\sum_{i=1}^{n+1-m} \lambda_ia_{i,n+1}\Big)=0.
		\eeqq
		By \eqref{eq:li}, we obtain
		\beqq
		\sum_{j=1}^{n}\bar \eta_j\cdot \eta_j-\Big(\sum_{i=1}^{n+1-m}\lambda_i a_{i,n+1}\Big)=0,
		\eeqq
		Note that ${\rm Ang}(\eta,\bar \eta)\leq K^{-2}$, thus $\bar \eta\cdot \eta>1-K^{-4}.$ It follows that $$\bar \eta_{n+1}:=\sum_{i=1}^{n+1-m}\lambda_i a_{i,n+1}>1-K^{-4}.$$ For all $(\xi, |\xi|) \in \mathcal{C}\cap \bar{V}$, since $(\bar \eta, -\bar \eta_{n+1})\in (\bar V)^{\bot}$, we have 
		$$\xi\cdot \bar \eta-|\xi|\bar{\eta}_{n+1}=0.$$
		Note that $|\bar \eta|=1$ and $\bar \eta_{n+1}>1-K^{-4}$,  we obtain  ${\rm Ang}(\xi,\bar \eta)\lesssim K^{-2}$.
	\end{proof}
	Decompose $\R^{n+1}=\bar V\bigoplus W$, that is,  $W$ is the orthogonal complement subspace of $\bar V$ in $\R^{n+1}$.
	\begin{lemma}\label{eq-le2}
		If for each $(\eta,|\eta|)\in \mathcal{C}\cap \bar V$, ${\rm Ang}(\eta,V^{-})\leq \f{\pi}{2}-K^{-2}$, then $W$ and $V$ are transversal in the sense that ${\rm Ang}(V,W)\gtrsim K^{-4}$;
	\end{lemma}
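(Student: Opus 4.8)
\emph{Sketch of the intended proof.} The plan is to reduce the statement to linear algebra and to show that a single scalar — the length of a vector $z\in\R^{n}$ canonically attached to $V$ — simultaneously governs the hypothesis and the conclusion. Write $\alpha_i=(a_{i,1},\dots,a_{i,n})$ and $\alpha_i^{+}=(\alpha_i,a_{i,n+1})$, $\beta_i=(\alpha_i,-a_{i,n+1})$ for $i=1,\dots,n+1-m$; here and below $V$ also denotes the direction space $\{x:\sum_{j=1}^{n+1}a_{i,j}x_j=0\}={\rm span}\{\alpha_i^{+}\}^{\perp}$ of the affine subspace $V$ (the translation term $b_i$ is irrelevant to ${\rm Ang}$). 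By \eqref{maxrank} the $\alpha_i$ are linearly independent, so $\Sigma:={\rm span}\{\alpha_i\}=(V^{-})^{\perp}\subset\R^{n}$ has dimension $n+1-m$ and there is a unique $z\in\Sigma$ with $\langle z,\alpha_i\rangle=a_{i,n+1}$ for every $i$. A direct inspection of the defining equations then gives the parametrizations
\[
V=\{(y-tz,\,t):y\in V^{-},\ t\in\R\},\qquad \bar V=\{(y+sz,\,s):y\in V^{-},\ s\in\R\},
\]
both $m$-dimensional; thus $V$ and $\bar V$ differ only by the sign of the $z$-component (informally they are reflections of one another across $\{x_{n+1}=0\}$, which is exactly the symmetry interchanging the normal directions of $\mathcal C$ with its points).

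I would first reinterpret the hypothesis. Recall from the discussion above that $(\xi,|\xi|)\in\mathcal C\cap\bar V$ iff $\mathbf n_\xi=\tfrac{1}{\sqrt2}(-\xi/|\xi|,1)\in V$; via the parametrization of $V$ this forces $t=\tfrac{1}{\sqrt2}$ and $\xi/|\xi|=z-\sqrt2\,y$ for some $y\in V^{-}$, and since $|\xi/|\xi||=1$ and $y\perp z$ we get $|y|^{2}=\tfrac12(1-|z|^{2})$ — in particular $|z|\le1$ — while ${\rm proj}_{V^{-}}(\xi/|\xi|)=-\sqrt2\,y$ has length $\sqrt{1-|z|^{2}}$. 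Hence ${\rm Ang}(\xi,V^{-})=\arccos\sqrt{1-|z|^{2}}$, \emph{the same value for every point of $\mathcal C\cap\bar V$}. Since we are in the nontrivial case $\bar V\cap\mathcal C\ne\varnothing$ (otherwise the hypothesis is vacuous — a case not relevant to the application, where the lemma is invoked only then), the hypothesis ${\rm Ang}(\xi,V^{-})\le\tfrac\pi2-K^{-2}$ becomes equivalent to $\sqrt{1-|z|^{2}}\ge\sin(K^{-2})$, i.e. to $1-|z|^{2}\gtrsim K^{-4}$; in particular $|z|<1$ strictly, so we are automatically in the ``transversal'' branch of the dichotomy, and quantitatively so.

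It remains to turn $1-|z|^{2}\gtrsim K^{-4}$ into ${\rm Ang}(V,W)\gtrsim K^{-4}$, which is the computational core. Given a unit $v=(y-tz,t)\in V$ (so $|y|^{2}+t^{2}(1+|z|^{2})=1$, hence $t^{2}\le(1+|z|^{2})^{-1}$), I would compute ${\rm proj}_{\bar V}v$ by minimizing $|v-(y'+sz,s)|^{2}=|y-y'|^{2}+(t+s)^{2}|z|^{2}+(t-s)^{2}$ over $y'\in V^{-}$, $s\in\R$; the minimizer is $y'=y$, $s=t\tfrac{1-|z|^{2}}{1+|z|^{2}}$, which yields
\[
|{\rm proj}_{\bar V}v|^{2}=|y|^{2}+t^{2}\frac{(1-|z|^{2})^{2}}{1+|z|^{2}},\qquad |{\rm proj}_{W}v|^{2}=1-|{\rm proj}_{\bar V}v|^{2}=\frac{4t^{2}|z|^{2}}{1+|z|^{2}}\le\frac{4|z|^{2}}{(1+|z|^{2})^{2}}.
\]
Taking the supremum over unit $v\in V$ gives $\cos{\rm Ang}(V,W)\le\tfrac{2|z|}{1+|z|^{2}}=1-\tfrac{(1-|z|)^{2}}{1+|z|^{2}}$, and since $1-|z|\ge\tfrac{1-|z|^{2}}{2}\gtrsim K^{-4}$ and $1+|z|^{2}\le2$ we conclude $\cos{\rm Ang}(V,W)\le1-cK^{-8}$, i.e. ${\rm Ang}(V,W)\gtrsim K^{-4}$. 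I expect the genuinely delicate step to be the very first one — spotting the common parametrization of $V$ and $\bar V$ through the single vector $z$ (this is precisely where the missing symmetry of a general cone, compared with the circular one, must be absorbed); once that is in hand, the identification of both the hypothesis and the conclusion with elementary inequalities on $|z|$ is routine. A secondary caveat to keep in mind is that the lemma is to be read together with the standing assumption $\bar V\cap\mathcal C\ne\varnothing$, and that its hypothesis excludes not merely the tangential case $|z|=1$ but forces $|z|$ to be quantitatively separated from $1$.
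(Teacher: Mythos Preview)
Your proof is correct and takes a genuinely different route from the paper's. The paper argues by choosing an orthonormal basis $\beta_1,\dots,\beta_{m-1}$ of $V^{-}$, extending by a unit $\beta_m$ to a basis of $V$, and then doing a case split on the size of ${\rm Proj}_{V^{-}}v$: when this projection is large one uses $V^{-}\subset\bar V$ directly; when it is small one shows $|{\rm Proj}_{\bar\beta_m}\beta_m|\gtrsim K^{-4}$ via an ad hoc calculation with the coordinates $(c_1,\dots,c_{n+1})$ of $\beta_m$ and the hypothesis on ${\rm Ang}(\eta,V^{-})$. By contrast, you isolate a single vector $z\in(V^{-})^{\perp}$ through which both $V$ and $\bar V$ are simultaneously parametrized, observe that the hypothesis is exactly $1-|z|^{2}\gtrsim K^{-4}$ (in particular that ${\rm Ang}(\eta,V^{-})$ is \emph{constant} over $\mathcal C\cap\bar V$, a fact the paper does not make explicit), and then compute the principal angle between $V$ and $\bar V$ in closed form, obtaining $\cos{\rm Ang}(V,W)=\tfrac{2|z|}{1+|z|^{2}}$. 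Your approach is cleaner, avoids any case analysis, and gives an exact formula rather than just the required lower bound. The trade-off is that your parametrization leans on the specific algebraic structure of the circular cone (namely that $\bar V$ is a genuine linear subspace), so for the general-cone version of the lemma later in the paper one would still need something closer to the paper's more robust basis-and-Hessian argument; you flag this correctly in your closing remarks.
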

	\begin{proof}
		Let $\{\beta_1,\cdots, \beta_{m-1}\}$ be an orthogonal basis for $V^{-}$, we may choose another unit vector $\beta_m$ such that $\{\beta_1,\cdots,\beta_{m-1}, \beta_m\}$ forms a unit orthogonal basis for $V$. Assume $\eta\in S^{n-1}$, since $\bar \beta_m:=\frac{1}{\sqrt{2}}(\eta,1)\in \bar V$, it is easy to verify that $\{\beta_1, \cdots, \beta_{m-1}, \bar \beta_m \}$ forms a basis for the subspace $\bar V$. To prove $${\rm Ang}(V,W)\gtrsim K^{-4},$$
		it suffices to show for any unit vector $v$ parallel to $V$, $|{\rm Proj}_{\bar V}v|\gtrsim K^{-4}.$  To achieve this, we subdivide the vectors parallel $V$ into two categories.
		
		{\bf \noindent Case I: \;$|{\rm Proj}_{V^{-}}v|>\f{1}{2}$.} Since that $V^{-}\subset\bar V$, we have $$|{\rm Proj}_{\bar V}v|> \frac12.$$
		
		{\bf \noindent Case II:\; $|{\rm Proj}_{V^{-}}v|\leq \f{1}{2}$.} In this case, we have $|{\rm Proj}_{\beta_m}v|> \f{1}{2}$. To prove $|{\rm Proj}_{\bar V}v|\gtrsim K^{-4}$, it suffices to show
		$$|{\rm Proj}_{\bar{\beta}_m}\beta_m|\gtrsim K^{-4},$$
		since
		$$ |{\rm Proj}_{\bar V}v|\gtrsim |{\rm Proj}_{\bar{\beta}_m}\beta_m|\gtrsim K^{-4}.$$
		By our construction $\beta_{m}\bot V^{-}$,  there exists $(\lambda_1,\cdots,\lambda_{n+1-m})\in\mathbb R^{n+1-m}$ such that
		$$(\sum_{i=1}^{n+1-m} \lambda_i a_{i,1},\cdots,\sum_{i=1}^{n+1-m} \lambda_i a_{i,n},c_{n+1})= \beta_{m}.$$
		For convenience, denote $c_j=\sum\limits_{i=1}^{n+1-m} \lambda_i a_{i,j}$ with $\sum\limits_{j=1}^{n+1}c_j^2=1$.
		
		Take $\tilde c_{n+1}\in\mathbb R$ such that $(c_1,\cdots, \tilde{c}_{n+1})\in V^{\perp}$. Since $\big(-\frac{\xi}{|\xi|},1\big)\parallel V$, it follows that
		\beq
		c_1\f{\xi_1}{|\xi|}+\cdots+c_n\f{\xi_n}{|\xi|}-\tilde{c}_{n+1}=0.
		\eeq
		By Cauchy-Schwarz inequality, we have
		\beq\label{eqccc}
		\tilde{c}_{n+1}^2\leq \sum_{j=1}^n c_j^2.
		\eeq
		Since $(c_1,\cdots, c_{n+1})\in V,\  (c_1,\cdots, \tilde{c}_{n+1})\in V^{\perp}$, we have
		\beqq
		\sum_{j=1}^n c_j^2+c_{n+1}\tilde{c}_{n+1}=0.
		\eeqq
		Together with \eqref{eqccc}, which implies that
		\beq
		\sum_{j=1}^nc_j^2\leq c_{n+1}^2.
		\eeq
		Therefore, we have \beqq c_{n+1}\geq \f{\sqrt{2}}{2}, \;\;\big(\sum_{j=1}^n c_j^2\big)^{1/2}\leq \f{\sqrt{2}}{2}.\eeqq
		The magnitude of the projection of $\beta_m$ onto $\bar \beta_m$ equals
		$$\f{1}{\sqrt{2}}\Big|c_1\eta_1+\cdots+c_n\eta_n+c_{n+1}\Big|.$$
		Recall that ${\rm Ang}(\eta, V^{-})<\f{\pi}{2}-K^{-2}$, it follows that
		$$|c_1\eta_1+\cdots+c_n\eta_n|<\f{\sqrt{2}}{2}(1-CK^{-4}).$$
		Since $c_{n+1}\geq \f{\sqrt{2}}{2}$, we obtain
		$$\frac12\Big|c_1\eta_1+\cdots+c_n\eta_n+c_{n+1}\Big|> CK^{-4}.$$
	\end{proof}

	We summarize the above discussion as follows:
	\begin{lemma}\label{eqaaaa}
		Decompose $\R^{n+1}=\bar V\bigoplus W$ such  that $\bar V\bot W$. We have the following dichotomy.
		\begin{itemize}
			\item[{\bf\noindent $\bullet$ }] If for each $(\eta,|\eta|)\in \mathcal{C}\cap \bar V$, ${\rm Ang}(\eta,V^{-})\leq \f{\pi}{2}-K^{-2}$, then $W$ and $V$ are transversal in the sense that ${\rm Ang}(V,W)\gtrsim K^{-4}$;
			\item[{\bf \noindent $\bullet$  }] If there exists $(\eta,|\eta|)\in \mathcal{C}\cap \bar V$ such that ${\rm Ang}(\eta,V^{-})>\f{\pi}{2}-K^{-2}$, then the projection of $\mathcal{C}\cap \bar V$ onto $\R^n$ is contained in a slab of dimensions $\sim 1\times K^{-2}\times \cdots \times  K^{-2}.$
		\end{itemize}
	\end{lemma}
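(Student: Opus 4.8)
The statement to prove, Lemma~\ref{eqaaaa}, is really just a bookkeeping summary: the first bullet is literally Lemma~\ref{eq-le2} and the second bullet is the content of Lemma~\ref{le-ge} after one easy geometric translation. So my plan is to derive it as a formal consequence of the two preceding lemmas rather than redo any estimate.

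\medskip

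\textbf{The plan.} First I would dispose of the logical structure. For a fixed affine subspace $V$ with associated linear subspaces $\bar V, V^-$, exactly one of the following holds: either $\operatorname{Ang}(\eta,V^-)\le \frac{\pi}{2}-K^{-2}$ for every $(\eta,|\eta|)\in\mathcal C\cap\bar V$, or there is at least one $(\eta,|\eta|)\in\mathcal C\cap\bar V$ with $\operatorname{Ang}(\eta,V^-)>\frac{\pi}{2}-K^{-2}$. (If $\mathcal C\cap\bar V=\varnothing$ there is nothing to prove, since both conclusions are vacuous; I'd note this at the start.) In the first case I simply invoke Lemma~\ref{eq-le2} verbatim to get $\operatorname{Ang}(V,W)\gtrsim K^{-4}$, which is exactly the first bullet.

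\medskip

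\textbf{The second case.} Here I need to convert the conclusion of Lemma~\ref{le-ge} — which says $\bar V\cap\mathcal C\subset\{t(\xi,1): t\in\R,\ \xi\in S^{n-1},\ \operatorname{Ang}(\xi,\eta)\lesssim K^{-2}\}$ — into the slab statement. This is the only ``new'' content and it's short. Projecting onto $\R^n$ kills the last coordinate, so the projection of $\bar V\cap\mathcal C$ lands in the cone $\{t\xi: t\in[1/2,1],\ \xi\in S^{n-1},\ \operatorname{Ang}(\xi,\eta)\lesssim K^{-2}\}$. A set of this form sits inside a slab of dimensions $\sim 1$ in the $\eta$-direction and $\sim K^{-2}$ in the $n-1$ orthogonal directions: indeed, writing $v=t\xi$ with $|t|\sim 1$, the component of $v$ along $\eta$ ranges over an interval of length $\sim 1$, while the component of $v$ in $\eta^\perp$ has size $\lesssim |t|\sin(\operatorname{Ang}(\xi,\eta))\lesssim K^{-2}$. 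That is exactly the claimed slab of dimensions $\sim 1\times K^{-2}\times\cdots\times K^{-2}$. I should double-check that the hypothesis of Lemma~\ref{le-ge}, namely $(\eta,1)\in\bar V$ and $\operatorname{Ang}(\eta,V^-)>\frac{\pi}{2}-K^{-2}$, is met: the existence of such $(\eta,|\eta|)\in\mathcal C\cap\bar V$ is precisely the Case~II hypothesis, and since $\bar V$ is a linear subspace and $(\eta,|\eta|)\in\bar V$ with $|\eta|\sim 1$, after normalizing we may take $|\eta|=1$ and then $(\eta,1)\in\bar V$ by homogeneity of $\bar V$.

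\medskip

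\textbf{Main obstacle.} There is no serious obstacle; the only points requiring a little care are (i) making sure the dichotomy is genuinely exhaustive and that the empty-intersection case is handled, and (ii) the elementary trigonometric estimate turning a thin angular sector in $\R^n$ into a slab, where one must be careful that $|t|$ is bounded above and below (which is guaranteed since $\mathcal C$ is the truncated cone $1/2\le|\xi|\le1$) so that both the $\sim 1$ and the $\lesssim K^{-2}$ bounds hold uniformly. I would therefore present the proof in two short paragraphs mirroring the two cases, citing Lemma~\ref{eq-le2} and Lemma~\ref{le-ge} respectively, and finish with the one-line slab computation.
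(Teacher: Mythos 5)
Your proposal is correct and matches the paper's treatment: the paper offers no separate proof of Lemma \ref{eqaaaa}, introducing it with ``we summarize the above discussion,'' i.e.\ it is exactly the combination of Lemma \ref{eq-le2} (first bullet) and Lemma \ref{le-ge} (second bullet) that you describe. Your added details — the exhaustive dichotomy, the normalization $(\eta,|\eta|)\mapsto(\eta/|\eta|,1)\in\bar V$ to meet the hypothesis of Lemma \ref{le-ge}, and the one-line conversion of the angular sector into a $\sim 1\times K^{-2}\times\cdots\times K^{-2}$ slab using $1/2\le t\le 1$ — are precisely the routine steps the paper leaves implicit.
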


	\subsection{ Generalization of Lemma \ref{eqaaaa} to a general class of cones }\label{gem}
	
	Now, we generalize the above lemma to the class of cones which are in the class $\mathbf\Phi$ as defined in Section 2.

	We define a set $L$ by 
	\beqq
	L:=\{\xi\in {\rm A}(1): \sum_{j=1}^{n} a_{i,j}\partial_{\xi_j}\phi(\xi)-a_{i,n+1}=0;\; i=1,\cdots,n+1-m\}.
	\eeqq
	If we choose $\phi(\xi)=|\xi|$, then  $\{(\xi,|\xi|): \xi \in L\}$ lies in the subspace $\bar V$. Therefore, if $L$ is not empty, the dimension of $L$ depends on whether $\bar V$ intersects the light cone $(\xi, |\xi|)$ tangentially or transversally. To be more precise, if $L$ intersects the light cone tangentially, then $\dim L=1$, otherwise $\dim L=m-1$. In this section, we shall prove that this fact can be generalized to general cones satisfying the homogeneous convex conditions.
	
	\begin{lemma}
		If $L$ is not empty, then $\dim  L=1$ or $\dim  L=m-1$.
	\end{lemma}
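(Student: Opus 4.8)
The plan is to analyze the set $L$ by linearizing the defining equations and exploiting the convexity hypothesis ${\bf H_2}$. Write $\alpha_i=(a_{i,1},\dots,a_{i,n})\in\R^n$ for $i=1,\dots,n+1-m$, and let $A$ be the $(n+1-m)\times n$ matrix with rows $\alpha_i$; as in the circular case (see \eqref{maxrank}), we may assume $A$ has full rank $n+1-m$, otherwise $L$ is empty. Define the map $G:{\rm A}(1)\to\R^{n+1-m}$ by $G(\xi)=\big(\langle\alpha_i,\nabla\phi(\xi)\rangle - a_{i,n+1}\big)_{i}$, so that $L=G^{-1}(0)$. The Jacobian of $G$ at a point $\xi\in L$ is $A\cdot D^2\phi(\xi)$, an $(n+1-m)\times n$ matrix, where $D^2\phi(\xi)$ is the Hessian. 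By ${\bf H_2}$, $D^2\phi(\xi)$ has rank exactly $n-1$ with one-dimensional kernel spanned by $\xi$ itself (this follows from Euler's relation: differentiating $\nabla\phi(\lambda\xi)=\nabla\phi(\xi)$ gives $D^2\phi(\xi)\xi=0$, and the other eigenvalues are positive). So the rank of $A\cdot D^2\phi(\xi)$ is either $n+1-m$ or $n-m$, depending on whether the row space of $A$ meets the line $\R\xi$.

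Next I would run a dichotomy exactly mirroring Lemma \ref{eqaaaa}. \emph{Case 1:} For every $\xi\in L$, the vector $\xi$ is \emph{not} (almost) in the row space of $A$, equivalently ${\rm Ang}(\xi,V^-)$ stays away from $\pi/2$ — here $V^-=\{x\in\R^n:\langle\alpha_i,x\rangle=0,\ i=1,\dots,n+1-m\}$ is the kernel of $A$, an $(m-1)$-dimensional subspace. Then $A\cdot D^2\phi(\xi)$ has full rank $n+1-m$ at every point of $L$, so by the implicit function theorem $L$ is a smooth submanifold of dimension $n-(n+1-m)=m-1$. \emph{Case 2:} There is some $\xi_0\in L$ with $\xi_0$ lying in (or extremely close to) the row space of $A$. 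Then I claim $L$ is contained in a thin cone around $\R\xi_0$, which forces $\dim L=1$ (by homogeneity $L$ is a cone, and it is contained in a one-dimensional cone up to the $K^{-2}$ error, hence is exactly a ray/line once we account for the scaling; more precisely, the level set equations confine $\nabla\phi$, and hence by the strict convexity $\xi$, to lie near a single direction). The geometric heart of Case 2 is the analogue of Lemma \ref{le-ge}: since $\langle\alpha_i,\nabla\phi(\xi)\rangle=a_{i,n+1}$ and $\xi_0$ is nearly a combination of the $\alpha_i$, one gets $\langle\xi_0,\nabla\phi(\xi)\rangle$ pinned down, and then strict convexity of $\phi$ on the unit sphere (the Hessian having $n-1$ eigenvalues in $[1/2,2]$) implies the Gauss map $\xi\mapsto\nabla\phi(\xi)/|\nabla\phi(\xi)|$ is a diffeomorphism onto its image with quantitative bounds, so the constraint traps $\xi$ in a $O(K^{-2})$-neighborhood of $\xi_0$.

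Concretely, the steps in order: (i) reduce to $A$ of full rank $n+1-m$ and set up $L=G^{-1}(0)$; (ii) compute the Jacobian $DG(\xi)=A\, D^2\phi(\xi)$ and, using Euler's identity $D^2\phi(\xi)\xi=0$ together with ${\bf H_2}$, show ${\rm rank}\, DG(\xi)\in\{n-m,\,n+1-m\}$ with the drop occurring exactly when $\xi\in{\rm Row}(A)={(V^-)}^\perp$; (iii) in the generic case apply the constant-rank/implicit function theorem to conclude $L$ is a clean $(m-1)$-manifold; (iv) in the degenerate case, transport the argument of Lemma \ref{le-ge} to $\phi$: use $\langle\alpha_i,\nabla\phi(\xi)\rangle=a_{i,n+1}$ to fix a linear functional of $\nabla\phi(\xi)$, invoke the strict convexity/nondegeneracy of the Gauss map to deduce all $\xi\in L$ have direction within $O(K^{-2})$ of a fixed $\eta$, and conclude $\dim L=1$. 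The main obstacle is step (iv): for the circular cone one has the explicit normal $\mathbf n_\xi=\frac1{\sqrt2}(-\xi/|\xi|,1)$ and an exact computation, whereas for general $\phi\in\mathbf\Phi$ one must replace the clean spherical geometry by quantitative estimates coming solely from the Hessian bounds $[1/2,2]$ and homogeneity — in particular showing that the Gauss map is bi-Lipschitz with controlled constants so that the $K^{-2}$ angular localization genuinely survives. This is precisely where the hypotheses ${\bf H_1}$, ${\bf H_2}$ (rather than the finer ${\bf H_3}$) are used, and it is the part that requires care rather than routine bookkeeping.
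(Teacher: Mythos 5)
Your steps (i)--(iii) are essentially the paper's nondegenerate case: the Jacobian of $G$ is $A\,{\rm Hess}(\phi)(\xi)$, the kernel of ${\rm Hess}(\phi)(\xi)$ is $\R\xi$ by homogeneity, and when $\xi\notin{\rm span}\{\alpha_1,\dots,\alpha_{n+1-m}\}={\rm Row}(A)$ the implicit function theorem gives that $L$ is locally an $(m-1)$-manifold. The genuine gap is in your step (iv), the degenerate case. There you conclude only that $L$ is contained in an $O(K^{-2})$-cone around $\R\xi_0$ and then assert that this ``forces $\dim L=1$.'' That inference is not valid: a set contained in an arbitrarily thin cone can have any dimension up to $n$, and in fact the parameter $K$ plays no role in this lemma at all --- the statement is an exact dimension dichotomy, not an angular localization. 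The $K^{-2}$-localization is the content of the \emph{separate} lemma generalizing Lemma \ref{le-ge} (used later for the broad--narrow dichotomy), and importing it here cannot close the argument. What is actually needed, and what the paper proves, is exact rigidity: if $\eta\in L\cap S^{n-1}$ with $\eta\in{\rm Row}(A)$, then writing $\eta=\sum_i\lambda_i\alpha_i$ and using that both $\eta,\bar\eta\in L$ gives $\big(\partial_\xi\phi(\eta)-\partial_\xi\phi(\bar\eta)\big)\cdot\eta=0$ for every $\bar\eta\in L\cap S^{n-1}$; Taylor expanding this quantity at $\eta$, the linear term vanishes because ${\rm Hess}(\phi)(\eta)\eta=0$, the quadratic term equals $-\tfrac12\langle \bar\eta-\eta,{\rm Hess}(\phi)(\eta)(\bar\eta-\eta)\rangle$ up to cubic errors by degree-$1$ homogeneity, and positive definiteness of ${\rm Hess}(\phi)(\eta)$ on $\eta^\perp$ makes it comparable to $|\bar\eta-\eta|^2$, a contradiction unless $\bar\eta$ is parallel to $\eta$. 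Hence $L\subset\{t\eta:t\in\R\}$ \emph{exactly}, which is what yields $\dim L=1$. Your ``bi-Lipschitz Gauss map'' heuristic only reproduces the approximate statement and so does not prove the lemma.

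A secondary but related defect: your case split is phrased with fuzzy thresholds (``not almost in the row space'' versus ``in or extremely close to the row space''), which is incoherent for this statement. If $\xi_0$ is merely close to, but not in, ${\rm Row}(A)$, then $DG(\xi_0)$ still has full rank $n+1-m$ and $L$ is locally an $(m-1)$-manifold near $\xi_0$, so concluding $\dim L=1$ in that regime would be false whenever $m>2$. The dichotomy must be taken exactly ($\eta\in{\rm Row}(A)$ or not), and in the degenerate branch one must prove the exact collapse of $L$ to a single ray as above.
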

	\begin{proof}
		Denote $\alpha_i:=(a_{i,1},\cdots, a_{i,n})$ and $F=(F_1,\cdots,F_{n+1-m})$ where
		\beqq
		F_i:=\sum_{j=1}^na_{i,j}\partial_{\xi_j}\phi(\xi)-a_{i,n+1}.
		\eeqq
		For convenience, we will use ${\rm Hess}(\phi)$ to denote the Hessian matrix of $\phi$, i.e.
		\beqq
		{\rm Hess}(\phi)=\Big(\frac{\partial^2\phi}{\partial \xi_i\partial \xi_j}\Big)_{n\times n}.
		\eeqq
		By the homogeneous convex conditions, we see that the $0$-eigenspace of ${\rm Hess}(\phi)$ at $\eta$ is spanned by the vector $\eta$, i.e.
		\beq\label{orth}
		{\rm Hess}(\phi)\big|_{\xi=\eta} \eta=0\eta=0.
		\eeq
		If $\eta\in L \;\text{and}\; \eta\not\in {\rm span}\,\{\alpha_1,\cdots,\alpha_{n+1-m}\}$, then
		\begin{displaymath}
			{\rm rank}\left({\rm Hess}(\phi)\Big|_{\xi=\eta} \left( \begin{array}{ccc}
				a_{1,1} & \cdots &a_{1,n}\\
				\vdots & \ddots & \vdots\\
				a_{n+1-m,1}&\cdots& a_{n+1-m,n}
			\end{array} \right)\right)=n+1-m.
		\end{displaymath}
		By the implicit function theorem, we have
		\beqq
		\dim (L)=m-1.
		\eeqq
		If $\eta\in L \; \text{and}\; \eta\in {\rm span}\,\{\alpha_1,\cdots,\alpha_{n+1-m}\}$, by the homogeneity  of $\phi$, we may assume that $\eta\in S^{n-1}$.  If there is another vector $\bar \eta\in S^{n-1}\cap L$ not parallel to $\eta$, a simple calculation using the definition of $L$ gives that
		\beq\label{para}
		\big(\partial_\xi\phi(\eta)-\partial_{\xi}\phi(\bar \eta)\big)\cdot \eta=0.
		\eeq
		Using Taylor's expansion formula, we obtain
		\beq\label{taylor}
		\big(\partial_\xi\phi(\eta)-\partial_{\xi}\phi(\bar \eta)\big)\cdot \eta=\langle \bar \eta-\eta, {\rm Hess}(\phi)\Big|_{\xi=\eta}\eta\rangle+\f{1}{2} \partial_{\xi\xi}^2\langle\partial_\xi\phi, \eta\rangle\Big|_{\xi=\eta}(\bar \eta-\eta)^2+O(\bar \eta-\eta)^3.
		\eeq
		By \eqref{orth}, we have
		\beqq
		\langle \bar \eta-\eta, {\rm Hess}(\phi)\Big|_{\xi=\eta}\eta\rangle=0.
		\eeqq
		By the homogeneity of $\phi$, it follows
		$$\partial_{\xi\xi}^2\langle\partial_\xi\phi, \eta\rangle\Big|_{\xi=\eta}=- \Big(\frac{\partial^2\phi}{\partial \xi_i\partial \xi_j}\Big)\Big|_{\xi=\eta},$$
		since $\bar \eta$ is not parallel to $\eta$, using the homogeneity convex conditions of $\phi$, it follows that
		$$\Big|\f{1}{2} \partial_{\xi\xi}^2\langle\partial_\xi\phi, \eta\rangle\Big|_{\xi=\eta}(\bar \eta-\eta)^2\Big|\sim |\bar\eta-\eta|^2,$$
		which contradicts \eqref{para}. Therefore, if $\eta\in L$ and $\eta\in {\rm span}\,\{\alpha_1,\cdots,\alpha_{n+1-m}\}$, then
		\beqq
		L\subset \{t\eta:  t\in \R\}.
		\eeqq

	\end{proof}
	
	Let $V, V^{-}$ be as defined in Section 4.1.  Next, we will generalize Lemma \ref{le-ge} and \ref{eq-le2} to general cones.
	
	{\bf \noindent Case I: Tangential case.}
	
	\begin{lemma}
		Let $\eta\in S^{n-1}$. If $\eta \in L$ and ${\rm Ang}(\eta,V^{-})>\f{\pi}{2}-K^{-2}$, then $L$ is contained in the set
		$$\{\xi\in \R^n: {\rm Ang}(\xi,\eta)\lesssim K^{-2}\}.$$
	\end{lemma}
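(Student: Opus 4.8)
The plan is to imitate the proof of Lemma~\ref{le-ge} for the circular cone, replacing the linear normal map $\xi\mapsto \xi/|\xi|$ by the Gauss-type map $\xi\mapsto \partial_\xi\phi(\xi)$ and exploiting the quantitative curvature hypothesis (eigenvalues of ${\rm Hess}(\phi)$ in $[1/2,2]$) to control the error terms. First I would record that, since ${\rm Ang}(\eta, V^-)>\tfrac\pi2-K^{-2}$, there exist scalars $\lambda_1,\dots,\lambda_{n+1-m}$ such that the vector $\bar\eta:=\sum_i\lambda_i\alpha_i$ (with $\alpha_i=(a_{i,1},\dots,a_{i,n})$) is a unit vector with ${\rm Ang}(\eta,\bar\eta)\le K^{-2}$, hence $\bar\eta\cdot\eta>1-K^{-4}$. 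Setting $\bar\eta_{n+1}:=\sum_i\lambda_i a_{i,n+1}$, the condition $\eta\in L$ (written out via the definition of $L$, multiplying the $i$-th defining equation by $\lambda_i$ and summing) gives exactly
\beqq
\partial_\xi\phi(\eta)\cdot\bar\eta-\bar\eta_{n+1}=0,
\eeqq
and since $\phi$ is homogeneous of degree $1$, Euler's identity $\partial_\xi\phi(\eta)\cdot\eta=\phi(\eta)$ together with $\partial_\xi\phi(\eta)\cdot\bar\eta$ being close to $\partial_\xi\phi(\eta)\cdot\eta$ shows $\bar\eta_{n+1}$ is within $O(K^{-4})$ (times a bounded constant depending on $\|\partial_\xi\phi\|_\infty$ on ${\rm A}(1)$) of $\phi(\eta)$.

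Next, for an arbitrary $\xi\in L$ (after normalizing $\xi\in S^{n-1}$ by homogeneity), the same manipulation yields $\partial_\xi\phi(\xi)\cdot\bar\eta=\bar\eta_{n+1}$, so $\partial_\xi\phi(\xi)\cdot\bar\eta$ and $\partial_\xi\phi(\eta)\cdot\bar\eta$ agree. I would then compare these two along the segment from $\eta$ to $\xi$: by Taylor's formula in the variable transverse to the radial direction, using $\partial_\xi\phi(\eta)\cdot\eta=\phi(\eta)$ and ${\rm Hess}(\phi)|_\eta\,\eta=0$ (equation~\eqref{orth}), one gets
\beqq
\big(\partial_\xi\phi(\xi)-\partial_\xi\phi(\eta)\big)\cdot\eta=-\tfrac12\langle {\rm Hess}(\phi)|_\eta(\xi-\eta),\xi-\eta\rangle+O(|\xi-\eta|^3),
\eeqq
exactly as in \eqref{taylor}. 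Combining with the fact that $\bar\eta$ differs from $\eta$ by $O(K^{-2})$ and $\partial_\xi\phi(\xi)-\partial_\xi\phi(\eta)=O(|\xi-\eta|)$, the left side of the displayed identity $\partial_\xi\phi(\xi)\cdot\bar\eta=\partial_\xi\phi(\eta)\cdot\bar\eta$ forces
\beqq
\big|\langle {\rm Hess}(\phi)|_\eta(\xi-\eta),\xi-\eta\rangle\big|\lesssim K^{-2}|\xi-\eta|+|\xi-\eta|^3.
\eeqq
Since the eigenvalues of ${\rm Hess}(\phi)|_\eta$ restricted to $\eta^\perp$ are $\ge 1/2$ and $\xi,\eta\in S^{n-1}$ means $\xi-\eta$ is nearly orthogonal to $\eta$ (its radial component is $O(|\xi-\eta|^2)$), the left side is $\gtrsim |\xi-\eta|^2$, and absorbing the cubic term for $|\xi-\eta|$ small gives $|\xi-\eta|\lesssim K^{-2}$, i.e.\ ${\rm Ang}(\xi,\eta)\lesssim K^{-2}$.

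The main obstacle is handling the error terms carefully: unlike the circular cone, where the normal map is homogeneous of degree $0$ and everything is exactly linear in $\xi/|\xi|$, here I must track the cubic Taylor remainder and the discrepancy between $\bar\eta$ and $\eta$ simultaneously, and I need the lower bound $\langle{\rm Hess}(\phi)|_\eta v,v\rangle\gtrsim|v|^2$ for $v\perp\eta$ to survive the projection of $\xi-\eta$ onto $\eta^\perp$. All of this is quantitative but routine given that $\phi\in\mathbf\Phi$ controls $\partial^\alpha\phi$ up to order $N_{\rm par}$ and pins the Hessian eigenvalues in $[1/2,2]$; the only genuine point is that one must first restrict to $|\xi-\eta|$ below a small absolute constant, which is automatic since $L\subset{\rm A}(1)$ and both points can be taken in the relevant coordinate chart, and the global statement then follows by homogeneity.
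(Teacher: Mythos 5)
Your proof is correct and takes essentially the same route as the paper: the paper's opening step (that $\partial_\xi\phi(\eta)-\partial_\xi\phi(\xi)\in V^{-}$, which by ${\rm Ang}(\eta,V^{-})>\pi/2-K^{-2}$ is nearly orthogonal to $\eta$) encodes exactly the same information as your testing of the defining equations of $L$ against a unit vector $\bar\eta\in{\rm span}\{\alpha_1,\dots,\alpha_{n+1-m}\}$ with ${\rm Ang}(\eta,\bar\eta)\lesssim K^{-2}$. The remainder of your argument — the Taylor expansion of $\big(\partial_\xi\phi(\xi)-\partial_\xi\phi(\eta)\big)\cdot\eta$ using ${\rm Hess}(\phi)\big|_{\xi=\eta}\eta=0$, the homogeneity identity $\partial^2_{\xi\xi}\langle\partial_\xi\phi,\eta\rangle\big|_{\xi=\eta}=-{\rm Hess}(\phi)\big|_{\xi=\eta}$, and the lower bound $\gtrsim|\xi-\eta|^2$ from the Hessian restricted to $\eta^{\perp}$ — coincides with the paper's proof, including its (implicit) treatment of the cubic remainder via smallness of $|\xi-\eta|$.
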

	\begin{proof}
		Let $\eta\in L\cap S^{n-1}$ with ${\rm Ang}(\eta,V^{-})>\f{\pi}{2}-K^{-2}$. Consider another unit vector $\bar \eta \in L\cap S^{n-1}$, we need to show that
		\beqq
		{\rm Ang}(\eta,\bar \eta)\lesssim K^{-2}.
		\eeqq
		By the definition of $L$, we have
		\beqq
		\sum_{j=1}^n a_{i,j}(\partial_{\xi_j}\phi(\eta)-\partial_{\xi_j}\phi(\bar \eta))=0, \quad i=1,\cdots,n+1-m.
		\eeqq
		Since ${\rm Ang}(\eta,V^{-})>\f{\pi}{2}-K^{-2}$ and $(\partial_{\xi}\phi(\eta)-\partial_{\xi}\phi(\bar \eta))\in V^{-}$,  it follows that
		\beq
		\Big|(\partial_{\xi}\phi(\eta)-\partial_{\xi}\phi(\bar \eta))\cdot \eta\Big|\lesssim |\partial_\xi\phi( \eta)-\partial_{\xi}\phi(\bar \eta)| K^{-2}.
		\eeq
		As in \eqref{taylor},
		\beq
		\big(\partial_\xi\phi(\eta)-\partial_{\xi}\phi(\bar \eta)\big)\cdot \eta=\langle \bar \eta-\eta, {\rm Hess}(\phi)\Big|_{\xi=\eta}\eta\rangle+\f{1}{2} \partial_{\xi\xi}^2\langle\partial_\xi\phi. \eta\rangle\Big|_{\xi=\eta}(\bar \eta-\eta)^2+O(\bar \eta-\eta)^3.
		\eeq
		Again by using the homogeneous convex conditions we have $$|\eta-\bar \eta|\lesssim |\partial_\xi\phi( \eta)-\partial_{\xi}\phi(\bar \eta)|^{\f{1}{2}} K^{-1},$$
		while
		$$|\partial_\xi\phi( \eta)-\partial_{\xi}\phi(\bar \eta)|\sim |\bar \eta-\eta|,$$
		therefore, 
		$$|\bar \eta-\eta|\lesssim K^{-2}.$$
	\end{proof}
	
	{\bf \noindent Case II: Transversal case.} In general, $\{(\xi,\phi(\xi)): \xi \in L\}$ may not lie in an affine subspace, actually $L$ can be a curved submanifold. To generalize Lemma \ref{eq-le2}, we should construct the associated affine subspace $\bar V$ in our setting. The main idea is that we will approximate $L$ by the tangent space of a given point, which lies in a slab of sufficiently small scale in the angular direction. Now, let us establish a geometric lemma associated to a fixed point. 
	
	Let $\eta\in L\cap S^{n-1}$,  with ${\rm Ang}(\eta,V^{-})<\f{\pi}{2}-K^{-2}$. Define $\widetilde V$ to be the $(n+1-m)$-dimensional linear subspace spanned by the vectors $\gamma_1,\cdots, \gamma_{n+1-m}$ given by
	$$\gamma_i:={\rm Hess}(\phi)\big|_{\xi=\eta}\alpha_i,\;  \alpha_i=(a_{i,1},\cdots, a_{i,n}), \;i=1,\cdots, n+1-m, $$
	The assumption ${\rm Ang}(\eta,V^{-})<\f{\pi}{2}-K^{-2}$ ensures that $\gamma_i,\ i=1,\cdots, n+1-m$ are linearly independent. Let  $\bar V^{-}$ be the orthogonal complement of $\widetilde V$ in $\R^n$, i.e.
	\beqq
	\R^n=\widetilde V\oplus \bar V^{-}.
	\eeqq
	Let $\bar V$ be the linear subspace spanned by $\bar V^{-}$ and $e_{n+1}$. Define $W$ to be the orthogonal complement space of $\bar V$ in $\R^{n+1}$, i.e.
	\beqq
	\R^{n+1}=\bar V\oplus W.
	\eeqq
	We remark that, unlike the circular cone case, all linear spaces $\widetilde V,\, \bar V,\, \bar V^-,\, W$ defined above depend on the choice of $\eta.$ In fact, we define $\bar V^-$ in such a way that it represents a certain linearization of $L$ at the point $\eta$.
	\begin{lemma}
		Let $\eta \in S^{n-1}\cap L$. If ${\rm Ang}(\eta, V^{-})<\f{\pi}{2}-K^{-2}$, then $W$ and $V$ are transversal in the sense that ${\rm Ang}(V,W)\gtrsim K^{-4}$;
	\end{lemma}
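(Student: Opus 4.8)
The goal is to show that when $\eta\in S^{n-1}\cap L$ satisfies ${\rm Ang}(\eta,V^-)<\frac{\pi}{2}-K^{-2}$, the orthogonal complement $W$ of $\bar V$ in $\R^{n+1}$ makes a quantitative angle $\gtrsim K^{-4}$ with $V$. My plan is to mirror the structure of the circular cone argument (Lemma \ref{eq-le2}), but replace the exact algebraic relations available there with the linearized ones coming from the definition of $\widetilde V=\mathrm{span}\{\gamma_i\}$, $\gamma_i={\rm Hess}(\phi)|_\eta\alpha_i$. The key translation is this: since $\eta\in L$, we have $\alpha_i\cdot\nabla\phi(\eta)=a_{i,n+1}$, and since ${\rm Hess}(\phi)|_\eta\eta=0$ (condition ${\bf H_2}$ plus homogeneity, i.e.\ \eqref{orth}), the vector $\bar\beta:=\frac{1}{\sqrt2}(\eta,1)$ will be orthogonal in $\R^{n+1}$ to all the vectors $(\gamma_i,-\gamma_i\cdot\text{something})$ — so one can realize $\bar V$ concretely as $\mathrm{span}\{\bar\beta\}^{\perp}$-type data, exactly as in the model case $\bar\beta_m=\frac{1}{\sqrt 2}(\eta,1)$.

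First I would set up an orthonormal basis $\{\beta_1,\dots,\beta_{m-1}\}$ of $V^-$ and extend it by $\beta_m$ to an orthonormal basis of $V$, as in the proof of Lemma \ref{eq-le2}. Then I would take an arbitrary unit vector $v$ parallel to $V$ and split into the two cases $|{\rm Proj}_{V^-}v|>\frac12$ and $|{\rm Proj}_{V^-}v|\le\frac12$. In the first case, one needs $V^-$ (or rather its image under the relevant identification) to sit inside $\bar V$ up to a controlled error; here the point is that $\bar V^-$ is \emph{defined} as the orthogonal complement of $\widetilde V$, and $\widetilde V$ is the ${\rm Hess}(\phi)|_\eta$-image of $\mathrm{span}\{\alpha_i\}$ — so $V^-=\mathrm{span}\{\alpha_i\}^{\perp}$ in $\R^n$ relates to $\bar V^-$ via the invertible-on-$\eta^\perp$ operator ${\rm Hess}(\phi)|_\eta$, whose eigenvalues lie in $[1/2,2]$. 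This gives a comparison $|{\rm Proj}_{\bar V^-}v|\gtrsim |{\rm Proj}_{V^-}v|$ up to absolute constants, with no $K$-loss, so Case I is clean. In the second case $|{\rm Proj}_{\beta_m}v|>\frac12$, and as in the model proof it suffices to bound $|{\rm Proj}_{\bar\beta_m}\beta_m|\gtrsim K^{-4}$ where $\bar\beta_m=\frac{1}{\sqrt2}(\eta,1)$. Writing $\beta_m$ in terms of the $\gamma_i$'s (since $\beta_m\perp V^-$, after applying ${\rm Hess}(\phi)|_\eta$ it becomes a combination of the $\gamma_i$ up to the $\eta$-component), I would compute $|\langle\beta_m,(\eta,1)\rangle|$ and use the hypothesis ${\rm Ang}(\eta,V^-)<\frac\pi2-K^{-2}$ exactly as in \eqref{eqccc}--onward: the normalization forces the $e_{n+1}$-component of the relevant dual vector to be $\geq\frac{\sqrt2}{2}$, while the $\R^n$-part pairs against $\eta$ with a deficit of size $\gtrsim K^{-4}$ coming from the angle condition, so the total is $\gtrsim K^{-4}$.

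The main obstacle, and the place where this genuinely differs from Lemma \ref{eq-le2}, is keeping the bookkeeping straight between the three spaces in play — $V^-\subset\R^n$ (defined by the $a_{i,j}$), $\widetilde V\subset\R^n$ (its ${\rm Hess}(\phi)|_\eta$-image), and $\bar V^-\subset\R^n$ (the orthocomplement of $\widetilde V$) — since in the circular case $\bar V^-=V^-$ and this subtlety is invisible. Concretely I would need the identity: a normal vector to the cone $(\xi,\phi(\xi))$ at $\eta$ is proportional to $(\nabla\phi(\eta)-\langle\nabla\phi(\eta),\eta\rangle\eta,\,\text{const})$, and differentiating the relation ``$\bar\beta_m\in\bar V$'' against the $\gamma_i$ directions uses ${\rm Hess}(\phi)|_\eta\eta=0$ to kill the unwanted term — this is the analogue of the step where in the model proof one used $\big(-\tfrac{\xi}{|\xi|},1\big)\parallel V$. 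Once that identity is in hand, the Cauchy--Schwarz estimates \eqref{eqccc}, $\sum c_j^2\le c_{n+1}^2$, and the conclusion $c_{n+1}\geq\frac{\sqrt2}{2}$ go through verbatim with ${\rm Hess}(\phi)|_\eta$-weights absorbed into absolute constants (using the $[1/2,2]$ eigenvalue bounds from the class $\mathbf\Phi$). I expect no new $K$-dependence to enter beyond the single $K^{-4}$ already present in the model case, so the final bound ${\rm Ang}(V,W)\gtrsim K^{-4}$ comes out with the same exponent.
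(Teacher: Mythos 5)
There is a genuine gap, and it sits in your ``Case I''. You claim that because ${\rm Hess}(\phi)|_\eta$ has eigenvalues in $[1/2,2]$ on $\eta^\perp$, the spaces $V^-={\rm span}\{\alpha_i\}^\perp$ and $\bar V^-=\widetilde V^\perp$ are comparable ``with no $K$-loss'', i.e. $|{\rm Proj}_{\bar V^-}v|\gtrsim|{\rm Proj}_{V^-}v|$ with absolute constants. This is false: ${\rm Hess}(\phi)|_\eta$ annihilates $\eta$ (this is \eqref{orth}), and the hypothesis ${\rm Ang}(\eta,V^-)<\f{\pi}{2}-K^{-2}$ only prevents unit vectors of ${\rm span}\{\alpha_i\}$ from being \emph{closer} than about $K^{-2}$ to $\eta$. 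Take $n+1-m=1$ and $\alpha=\eta+\varepsilon\tilde\eta$ with $\tilde\eta\perp\eta$ and $\varepsilon\sim K^{-2}$ (allowed by the hypothesis): then $\gamma={\rm Hess}(\phi)|_\eta\alpha=\varepsilon\,{\rm Hess}(\phi)|_\eta\tilde\eta$ satisfies $\langle\gamma,\alpha\rangle\sim\varepsilon^2$ while $|\gamma|\sim\varepsilon$, so $\widetilde V=\R\gamma$ lies within angle $\sim\varepsilon\sim K^{-2}$ of $V^-=\alpha^\perp$, and there are unit vectors $w\in V^-$ with $|{\rm Proj}_{\bar V^-}w|\sim K^{-2}$, not $\gtrsim 1$. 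So the transversality of $V^-$ and $\bar V^-$ genuinely degrades in $K$, and establishing the quantitative version ($\gtrsim K^{-4}$, which is all that is needed) is precisely where the hypothesis must be used — this is the content of the paper's key claim $\langle\alpha_i,{\rm Hess}(\phi)|_\eta\alpha_i\rangle\gtrsim K^{-4}$, proved from ${\rm Ang}(\eta,\alpha_i)>K^{-2}$ (inequality \eqref{tangle}) and nondegeneracy of the Hessian on $\eta^\perp$. Your plan uses the angle hypothesis only in Case II, which is exactly backwards relative to where it is needed.

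Two further structural points. First, your case split at threshold $\f12$ does not insulate Case I from the $\beta_m$-component: with $|{\rm Proj}_{\beta_m}v|$ possibly of order one, the term $a_m{\rm Proj}_{\bar V^-}\beta_m$ can cancel the (at best $\sim K^{-2}$-sized) contribution of ${\rm Proj}_{\bar V^-}({\rm Proj}_{V^-}v)$. The paper splits instead at $|{\rm Proj}_{\beta_m}v|\gtrless K^{-4}$: when this is $\geq K^{-4}$ it simply projects onto $e_{n+1}$, which lies in $\bar V$ \emph{by construction} of $\bar V=\bar V^-\oplus\R e_{n+1}$, and uses $|{\rm Proj}_{e_{n+1}}\beta_m|\gtrsim1$ (no angle hypothesis needed); when it is $\leq K^{-4}$ the contamination is negligible against the $K^{-4}$ transversality of $V^-$ and $\bar V^-$. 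Second, your Case II route through $\bar\beta_m=\f{1}{\sqrt2}(\eta,1)$ is not a clean reduction: for $i<m$ the inner products $\langle\beta_i,(\eta,1)\rangle=\beta_i\cdot\eta$ need not be small (the hypothesis even forces $\eta$ to have a nontrivial $V^-$-component), so bounding $|\langle\beta_m,(\eta,1)\rangle|$ alone does not bound $|{\rm Proj}_{\bar V}v|$ from below; the paper's use of $e_{n+1}\in\bar V$ avoids these cross terms entirely, since vectors of $V^-$ have vanishing last coordinate. In short, the conclusion is correct, but your argument as written fails at the central quantitative step and needs to be reorganized along the lines of the paper's Case IIa/IIb dichotomy.
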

	
	\begin{proof}
		Let $\{\beta_1,\cdots, \beta_{m-1}\}$ be an orthonormal basis for $V^{-}$, we may choose another unit vector $\beta_m$ such that $\{\beta_1,\cdots,\beta_{m-1}, \beta_m\}$ forms an orthonormal basis for $V$.  To prove $${\rm Ang}(V,W)\gtrsim K^{-4},$$
		it suffices to show for any unit vector $v$ parallel to $V$,
		\beq\label{proj1}
		|{\rm Proj}_{\bar V}v|\gtrsim K^{-4}.
		\eeq
		To prove \eqref{proj1}, we subdivide the set vectors of $v$ which are parallel $V$ into two categories.
		
		\noindent{\bf Case IIa: $|{\rm Proj}_{\beta_m}v|>K^{-4}$.}  
		
		Assume that $v=\sum_{i=1}^{m-1}a_i \beta_i+a_m \beta_m,$ then we have $|a_m|>K^{-4}$. Therefore
		$$|{\rm Proj}_{\bar V}v|\ge|{\rm Proj}_{e_{n+1}}v|= |a_m||{\rm Proj}_{e_{n+1}}\beta_m|\gtrsim K^{-4},$$
		where we used the fact that
		$$|{\rm Proj}_{e_{n+1}}\beta_m|\gtrsim 1.$$
		
		\noindent{\bf Case IIb: $|{\rm Proj}_{\beta_m}v|\leq K^{-4}$.}

		In this case 
		$$|{\rm Proj}_{ V^{-}}v|\geq 1/2.$$
		Then \eqref{proj1} will follow from the following claim: 
		
		{ \bf Claim:} for each unit vector $v_1 \in \bar V^{-}, v_2\in V^{-}$,  \beq
		{\rm Ang}(v_1, v_2)\leq \frac{\pi}{2}-C K^{-4}.
		\eeq
		Since $\alpha_i, i=1,\cdots, n+1-m$ form a basis of  a subspace which is orthogonal to $V^{-}$ in $\R^n$,  and  ${\rm Hess}(\phi)\big|_{\xi=\eta} \alpha_i, i=1,\cdots, n+1-m$ form a basis of $\widetilde V$ which is orthogonal to $\bar V^{-}$. Thus it suffices to show
		\beq \label{anglere} \Big\langle \alpha_i, {\rm Hess}(\phi)\big|_{\xi=\eta} \alpha_i\Big\rangle\gtrsim K^{-4}.\eeq
		Note that 
		$${\rm Ang}(\eta, V^{-})<\frac{\pi}{2}-K^{-2},$$
		which implies that 
		\beq \label{tangle}{\rm Ang}(\eta, \alpha_i)>K^{-2},\; i=1,\cdots, n+1-m.\eeq
		Let $\alpha_i=a_1\eta+a_2 \tilde \eta,$ where $\tilde \eta$ lies in the subspace $\eta^{\perp}$, which is the orthogonal complement of $\{t\eta, t\in \R\}$ in $\R^{n}.$
		\eqref{tangle} implies $a_2>cK^{-2}$. Since $\eta\in \ker {\rm Hess}(\phi)\big|_{\xi=\eta},$ We have
		\beq  \Big\langle \alpha_i, {\rm Hess}(\phi)\big|_{\xi=\eta} \alpha_i\Big\rangle=(a_2)^2 \Big\langle \tilde \eta, {\rm Hess}(\phi)\big|_{\xi=\eta} \tilde \eta\Big\rangle\gtrsim K^{-4}.\eeq
		In the last inequality, we have used the fact that ${\rm Hess}(\phi)\big|_{\xi=\eta}$ is nondegenerate when restricted to the subspace $\eta^{\perp}$.
	\end{proof}
	
	We summarize the above discussion below.
	\begin{lemma}\label{eqaaa}
		Let  $L, \bar V, V^{-}$ and $W$ are defined as above and $\eta\in L$. We have the following dichotomy:
		\begin{enumerate}
			\item[{\bf\noindent $a$)}] If 
			${\rm Ang}(\eta,V^{-})\leq \f{\pi}{2}-K^{-2}$, then $W$ and $V$ are transversal in the sense that ${\rm Ang}(V,W)\gtrsim K^{-4}$;
			\item[{\bf \noindent $b$) }] If 
			${\rm Ang}(\eta,V^{-})>\f{\pi}{2}-K^{-2}$, then $L$ is contained in a slab of dimensions   $\sim 1\times K^{-2}\times \cdots \times  K^{-2}.$
		\end{enumerate}
	\end{lemma}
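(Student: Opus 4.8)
\textbf{Proof proposal for Lemma \ref{eqaaa}.}

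The statement is simply the assembly of the two cases that were treated in the preceding lemmas, so the plan is essentially to quote them and glue the conclusions together, taking care of the bookkeeping about which ambient subspace $\bar V$ (and hence $W$) we use. First I would recall that $L$ is nonempty by hypothesis and that, by the dimension lemma proved above, $\dim L \in \{1, m-1\}$; moreover the whole construction of $\widetilde V$, $\bar V^-$, $\bar V$ and $W$ from a chosen point $\eta \in L \cap S^{n-1}$ was carried out exactly under the transversality assumption ${\rm Ang}(\eta, V^-) \le \frac{\pi}{2} - K^{-2}$, which is the content of part $a)$. So for part $a)$ there is nothing to do beyond citing the transversality lemma: with $\bar V$ defined as the span of $\bar V^-$ and $e_{n+1}$, and $W = \bar V^\perp$ in $\R^{n+1}$, one has ${\rm Ang}(V,W) \gtrsim K^{-4}$.

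For part $b)$, I would apply the tangential-case lemma: if ${\rm Ang}(\eta, V^-) > \frac{\pi}{2} - K^{-2}$ for $\eta \in L \cap S^{n-1}$, then every $\bar\eta \in L \cap S^{n-1}$ satisfies ${\rm Ang}(\eta, \bar\eta) \lesssim K^{-2}$, i.e.\ $L \subset \{\xi \in \R^n : {\rm Ang}(\xi, \eta) \lesssim K^{-2}\}$. Since $L \subset {\rm A}(1)$ is contained in the annulus where $|\xi| \sim 1$, this angular cone of aperture $\sim K^{-2}$ around the fixed direction $\eta$ meets ${\rm A}(1)$ in a region contained, after choosing coordinates with one axis along $\eta$, in a slab of dimensions $\sim 1 \times K^{-2} \times \cdots \times K^{-2}$; this is the claimed conclusion. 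One small point to address is that the hypothesis ``$\eta \in L$'' in the statement should be read as: there exists such an $\eta$ (and in case $b)$ the slab is built around that $\eta$), and that the two alternatives are exhaustive because for a fixed $\eta$ exactly one of ${\rm Ang}(\eta, V^-) \le \frac{\pi}{2}-K^{-2}$ or $> \frac{\pi}{2}-K^{-2}$ holds.

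I do not anticipate a real obstacle here since both halves are already established; the only thing requiring a line of care is making explicit that the angular neighborhood of $\eta$ inside ${\rm A}(1)$ is genuinely a slab of the stated dimensions (using $|\xi| \sim 1$ to convert the angular bound into a width bound in the $n-1$ transverse directions, and noting the radial extent is $\sim 1$), and that the constants hidden in $\gtrsim$ and $\lesssim$ depend only on $n$ and the constants $C_{\rm par}, N_{\rm par}$ built into the class $\mathbf\Phi$, not on $R$ or $K$. With these remarks, the lemma follows immediately by combining the two preceding lemmas.
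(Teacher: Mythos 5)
Your proposal is correct and matches the paper exactly: the paper states this lemma with the words ``We summarize the above discussion below,'' i.e.\ it is obtained precisely by citing the preceding tangential-case and transversal-case lemmas, just as you do. Your extra remarks (converting the angular bound ${\rm Ang}(\xi,\eta)\lesssim K^{-2}$ on $L\subset {\rm A}(1)$ into a $1\times K^{-2}\times\cdots\times K^{-2}$ slab, and noting the two cases are exhaustive) are harmless elaborations of points the paper leaves implicit.
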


	\section{$k$-broad ``norm" estimate}\label{section-5}
	
	In this section, we prove $k$-broad ``norm" estimates associated to a general phase function $\phi$ in the class $\mathbf \Phi$. As discussed earlier, the same estimates should hold for any $\phi_R\in\mathbf \Phi(R)$.  Recall that we have $1\ll K\ll R^{\varepsilon}$. We partition ${\rm A}(1)$, in the angular direction,  into a collection of slabs $\nu$ and $\tau$ of dimensions $1\times R^{-1}\times \cdots \times R^{-1}$ and $1\times K^{-1}\times \cdots \times  K^{-1}$ respectively. In this part, we write  $f_\nu:=f\chi_\nu$ and choose $V\subset \R^{n+1}$ to be a $(k-1)$-dimensional linear subspace. The set $G(\nu)$ consisting of the unit normal vectors of the cone associated with the slab  $\nu$ is defined by
	$$G(\nu):=\bigg\{\f{1}{\sqrt{1+|\nabla\phi|^2}}\big(-\nabla\phi(\xi),1\big): \xi \in \nu\bigg\}.$$
	Similarly, we define 
	\beqq
	G(\tau):=\bigcup_{\nu\subset \tau}G(\nu).
	\eeqq
	We denote by  ${\rm Ang}(G(\nu), V)$ the smallest angle between non-zero vectors $v\in V$ and $v'\in G(\nu)$.
	
	For each $B^{n+1}_{K^2}\subset C_R^{n+1}$, we define 
	$\mu_{Ef}(B^{n+1}_{K^2})$ by
	\beqq
	\mu_{Ef}(B_{K^2}^{n+1}):=\min \limits_{V_1,\cdots, V_{A}}\max\limits_{\substack{\tau:\,\forall 1\leq \ell \leq A \\{\rm Ang}(G(\tau),V_{\ell})>K^{-2}}}\Big( \int_{B^{n+1}_{K^2}} |Ef_\tau|^pdxdt\Big).
	\eeqq

	Let  $\{B_{K^2}^{n+1}\}$ be a collection of finitely overlapping balls which forms a cover of $C_R^{n+1}$.
	Then we define the $k$-broad ``norm" as
	\beqq
	\big\|Ef\big\|_{{\rm BL}_{k,A}^p(C_R^{n+1})}^p:=\sum_{B^{n+1}_{K^2}\subset C_R^{n+1}} \mu_{Ef}(B^{n+1}_{K^2}).
	\eeqq
	Next, we will record some useful properties of the broad ``norm" of which the proof can be found in \cite{Guth}.
	\begin{lemma}[Triangle inequality]
		Suppose that $1\leq p<\infty$, $f=g+h$ and $A=A_1+A_2$, where $A, A_1,A_2$ are nonnegative integers. Then
		\beq\label{BT}
		\|Ef\|_{\br_{k,A}^p(U)}\lesssim \|Eg\|_{\br_{k,A_1}^p(U)}+\|Eh\|_{\br_{k,A_2}^p(U)}.
		\eeq
	\end{lemma}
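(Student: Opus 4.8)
The plan is to reduce the global inequality \eqref{BT} to a pointwise-in-ball statement, since the broad ``norm'' is defined by summing the quantities $\mu_{Ef}(B_{K^2}^{n+1})$ over a fixed finitely-overlapping cover $\{B_{K^2}^{n+1}\}$ of $U$. Thus it suffices to show that for each ball $B = B_{K^2}^{n+1}$ in the cover,
\[
\mu_{Ef}(B)^{1/p} \lesssim \mu_{Eg}(B)^{1/p} + \mu_{Eh}(B)^{1/p},
\]
after which raising to the $p$-th power, using $(a+b)^p \lesssim_p a^p + b^p$, and summing over $B$ gives \eqref{BT} (the implicit constant depending only on $p$ and the overlap constant of the cover). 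Fix such a ball $B$.

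First I would unwind the definition of $\mu_{Ef}(B)$: it is the minimum over choices of subspaces $V_1,\dots,V_A$ of the maximum, over slabs $\tau$ that are $K^{-2}$-transverse to \emph{all} of $V_1,\dots,V_A$, of $\int_B |Ef_\tau|^p$. To bound $\mu_{Ef}(B)$ from above, I get to \emph{choose} the subspaces: let $V_1^g,\dots,V_{A_1}^g$ be a near-optimal choice realizing (up to an arbitrarily small loss, or exactly if the min is attained) $\mu_{Eg}(B)$, and let $V_1^h,\dots,V_{A_2}^h$ be a near-optimal choice for $\mu_{Eh}(B)$. Concatenate them into a list of $A = A_1 + A_2$ subspaces. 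Now if a slab $\tau$ is $K^{-2}$-transverse to every subspace in this concatenated list, then in particular it is transverse to all of $V_1^g,\dots,V_{A_1}^g$, hence $\int_B |Eg_\tau|^p \le \mu_{Eg}(B)$ (plus the arbitrarily small loss), and likewise $\int_B |Eh_\tau|^p \le \mu_{Eh}(B)$. Since $f_\tau = g_\tau + h_\tau$ (the frequency cutoff $\chi_\tau$ is linear), the triangle inequality in $L^p(B)$ gives
\[
\Big(\int_B |Ef_\tau|^p\Big)^{1/p} \le \Big(\int_B |Eg_\tau|^p\Big)^{1/p} + \Big(\int_B |Eh_\tau|^p\Big)^{1/p} \le \mu_{Eg}(B)^{1/p} + \mu_{Eh}(B)^{1/p},
\]
uniformly over all such $\tau$. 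Taking the maximum over $\tau$ transverse to the concatenated list, and then noting this particular choice of $A$ subspaces is an admissible competitor in the minimum defining $\mu_{Ef}(B)$, yields $\mu_{Ef}(B)^{1/p} \lesssim \mu_{Eg}(B)^{1/p} + \mu_{Eh}(B)^{1/p}$ as desired.

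The only genuinely delicate point — and the one I would be careful to state cleanly — is the bookkeeping of the min-max quantifiers: one must check that enlarging the list of subspaces only \emph{shrinks} the family of admissible transverse slabs $\tau$, so passing from the $g$- and $h$-optimal lists to their concatenation can only decrease each of the relevant maxima, and that the concatenated list is a legitimate competitor of the right cardinality $A = A_1 + A_2$ in the minimization defining $\mu_{Ef}$. There is also a minor wrinkle if the minima defining $\mu_{Eg}(B)$ and $\mu_{Eh}(B)$ are not attained, handled by taking lists that are optimal up to an error $\epsilon$ and letting $\epsilon \to 0$ at the end; since everything is finite-dimensional the minima are in fact attained, so this is not a real issue. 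No curvature or geometry of $\phi$ enters here — this is a purely formal consequence of the definition, exactly as in \cite{Guth}.
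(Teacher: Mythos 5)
Your argument is correct and is essentially the standard proof: the paper itself does not spell out a proof but defers to \cite{Guth}, and your concatenation-of-subspace-lists argument (transversality to the combined list of $A_1+A_2$ subspaces implies transversality to each sublist, then the $L^p(B)$ triangle inequality, $(a+b)^p\lesssim_p a^p+b^p$, and summation over the balls $B_{K^2}^{n+1}$) is exactly that proof. The quantifier bookkeeping and the near-optimal-list remark are the right points to be careful about, and you handle them correctly.
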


	\begin{lemma}[H\"older's inequality]
		Suppose that $1\leq p,\,p_1,\,p_2<\infty$, $0\leq \alpha_1,\,\alpha_2\leq 1$ satisfy $\alpha_1+\alpha_2=1$ and
		$$\f{1}{p}=\f{1}{p_1}+\f{1}{p_2}.$$
		Suppose that $A=A_1+A_2$, then
		\beq
		\|Ef\|_{\br_{k,A}^p(U)}\leq \|Ef\|_{\br_{k,A_1}^p(U)}^{\alpha_1} \|Ef\|_{\br_{k,A_2}^p(U)}^{\alpha_2}.
		\eeq
	\end{lemma}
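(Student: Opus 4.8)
The plan is to reduce the claimed inequality to a pointwise statement on each ball $B=B_{K^2}^{n+1}$ of the finitely overlapping cover of $U$, and then to reassemble these by Hölder's inequality for sums. For bookkeeping let me write $\mu^{q,m}_{Ef}(B)$ for the quantity $\mu_{Ef}(B_{K^2}^{n+1})$ attached to the exponent $q$ and parameter $m$, so that by definition $\|Ef\|_{\br^q_{k,m}(U)}^q=\sum_{B}\mu^{q,m}_{Ef}(B)$. The only analytic input is the elementary log‑convexity (interpolation) inequality for Lebesgue norms on a fixed set: with $\alpha_1+\alpha_2=1$ and the exponents related as in the hypothesis, $\|g\|_{L^p(B)}\le\|g\|_{L^{p_1}(B)}^{\alpha_1}\|g\|_{L^{p_2}(B)}^{\alpha_2}$; everything else is combinatorics of the $\min$–$\max$ defining $\mu$.

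First I would fix $B$ and choose subspace families realizing the minima defining $\mu^{p_1,A_1}_{Ef}(B)$ and $\mu^{p_2,A_2}_{Ef}(B)$ — say $V_1',\dots,V_{A_1}'$ and $V_1'',\dots,V_{A_2}''$ — or, since the transversality constraint ${\rm Ang}(G(\tau),V)>K^{-2}$ is an open condition and the minimum need not be attained, families that are optimal up to a multiplicative factor $1+\eta$. Their union is a family of $A=A_1+A_2$ subspaces, hence admissible in the minimum defining $\mu^{p,A}_{Ef}(B)$. If $\tau$ is transverse to every member of this union, then it is in particular transverse to the first family and to the second family separately; applying the interpolation inequality to $g=Ef_\tau$ on $B$ and then bounding each of the two factors by the corresponding maximum over transverse slabs gives
\beqq
\int_B|Ef_\tau|^p\,dxdt\le\Big(\int_B|Ef_\tau|^{p_1}\Big)^{\f{\alpha_1 p}{p_1}}\Big(\int_B|Ef_\tau|^{p_2}\Big)^{\f{\alpha_2 p}{p_2}}\le\big(\mu^{p_1,A_1}_{Ef}(B)\big)^{\f{\alpha_1 p}{p_1}}\big(\mu^{p_2,A_2}_{Ef}(B)\big)^{\f{\alpha_2 p}{p_2}},
\eeqq
up to a factor absorbed by near-optimality. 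Taking the maximum over admissible $\tau$ on the left and then the infimum over admissible subspace families yields the pointwise bound $\mu^{p,A}_{Ef}(B)\le\big(\mu^{p_1,A_1}_{Ef}(B)\big)^{\alpha_1 p/p_1}\big(\mu^{p_2,A_2}_{Ef}(B)\big)^{\alpha_2 p/p_2}$.

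Finally I would sum over the balls $B\subset U$. The exponents satisfy $\f{\alpha_1 p}{p_1}+\f{\alpha_2 p}{p_2}=1$ by the relation among $p,p_1,p_2,\alpha_1,\alpha_2$, so $\f{p_1}{\alpha_1 p}$ and $\f{p_2}{\alpha_2 p}$ are conjugate and Hölder's inequality for sums gives
\beqq
\|Ef\|_{\br^p_{k,A}(U)}^p=\sum_B\mu^{p,A}_{Ef}(B)\le\Big(\sum_B\mu^{p_1,A_1}_{Ef}(B)\Big)^{\f{\alpha_1 p}{p_1}}\Big(\sum_B\mu^{p_2,A_2}_{Ef}(B)\Big)^{\f{\alpha_2 p}{p_2}}=\|Ef\|_{\br^{p_1}_{k,A_1}(U)}^{\alpha_1 p}\,\|Ef\|_{\br^{p_2}_{k,A_2}(U)}^{\alpha_2 p},
\eeqq
and taking $p$-th roots gives the lemma. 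The argument is essentially formal: the content is the interchange of the $\min$–$\max$ with $L^p$ interpolation, exactly as in Guth's original broad-norm manipulations, and the only point calling for a little care is the non-attainment of the minimum, which is handled in the standard way by passing to $(1+\eta)$-optimal families and letting $\eta\to0$. I do not foresee any serious obstacle.
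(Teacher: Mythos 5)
Your argument is correct and is essentially the standard proof of this lemma from Guth (which the paper does not reproduce, deferring instead to \cite{Guth}): on each ball one takes the union of near-optimal subspace families for the two broad norms, applies ball-wise log-convexity of the $L^q$ norms to each admissible $\tau$, and then sums over balls with H\"older's inequality for sequences. The one caveat is that both of your key steps (the interpolation inequality without a volume factor and the conjugacy $\frac{\alpha_1 p}{p_1}+\frac{\alpha_2 p}{p_2}=1$) require the relation $\frac{1}{p}=\frac{\alpha_1}{p_1}+\frac{\alpha_2}{p_2}$, which is the hypothesis in Guth's formulation; the relation $\frac{1}{p}=\frac{1}{p_1}+\frac{1}{p_2}$ as printed in the statement appears to be a typo and would not support these steps, so you have implicitly (and correctly) proved the lemma under the intended hypothesis.
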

	In the argument we will need to choose $A$ sufficiently large to ensure that the above inequalities may be applied quite a few (but finitely many) times. At the end of the argument, the reader shall see that the relation between the parameters $K,A,R$ can be described by the following inequalities:
	$$1\ll A\lesssim K^\varepsilon\lesssim R^{\varepsilon^2}.$$

	In this part, we aim to prove that the following broad-``norm'' estimate.
	
	\begin{theorem}\label{Theorem-5}
		For any $2\leq k\leq n+1$ and $\varepsilon>0$,  there exists a large constant $A$ such that
		\beq\label{eq-211}
		\|Ef\|_{{\rm BL}_{k,A}^p(C_R^{n+1})}\lesssim_{\varepsilon,\phi}R^\varepsilon \|f\|_{L^2({\rm A}(1))}, \;\;\;{\rm supp}\;f\subset {\rm A}(1),
		\eeq
		for $p\geq  2\frac{n+k+1}{n+k-1}$, where the implicit constant depends on the derivatives of $\phi$ up to finite orders and the eigenvalues of the hessian matrix of $\phi$.
	\end{theorem}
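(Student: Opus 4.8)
\textbf{Proof strategy for Theorem \ref{Theorem-5}.} The plan is to follow the Guth--Ou--Wang polynomial partitioning scheme, adapted to the general cone $(\xi,\phi(\xi))$ with $\phi\in\mathbf\Phi$. We run an induction on the radius $R$, and for fixed $R$, a further induction on the dimension of the smallest affine subspace into which the wave packets of $Ef$ are essentially concentrated (the ``complexity'' or ``grain'' dimension). The base case is when the caps $\nu$ contributing to $f$ are so few that a trivial $L^2$-orthogonality bound suffices, or when the wave packets concentrate on a low-dimensional subspace where a lower-dimensional estimate can be invoked. For the inductive step, one runs the standard dichotomy: either $Ef$ is \emph{cellular}-dominated, in which case a degree-$d$ polynomial (with $d\sim R^{\varepsilon^2}$) partitions $C_R^{n+1}$ into $\sim d^{n+1}$ cells on each of which $\|Ef\|_{\br_{k,A}^p}$ is controlled by $\|Ef_{\rm cell}\|_{\br_{k,A'}^p}$ with reduced amplitude, and one closes via the induction on $R$ after summing the $d^{n+1}$ contributions and paying the $L^2$-orthogonality factor; or $Ef$ is \emph{algebraic}-dominated, i.e.\ concentrated in the $R^{1/2+\delta}$-neighborhood of the zero set $Z$ of the polynomial, which is the genuinely hard case.

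\textbf{The transverse/tangential split in the algebraic case.} In the algebraic case one covers $N_{R^{1/2+\delta}}(Z)\cap C_R^{n+1}$ by $R^{1/2}$-balls $B_{R^{1/2}}$, and on each such ball $Z$ looks like a hyperplane up to scale $R^{1/2+\delta}$. Relative to the tangent space $V_{B}$ of $Z$ at $B$, one splits the caps $\tau$ into those \emph{tangent} to $Z$ (i.e.\ ${\rm Ang}(G(\tau),V_B)\lesssim R^{-1/2+\delta}$, or more precisely the relevant $K^{-1}$ or $\rho^{-1/2}$ threshold) and those \emph{transverse}. For the transverse caps, the number of tubes $T_{\nu,w}^\ell$ through a given $R^{1/2}$-ball that point transverse to $V_B$ is controlled — this is precisely where the geometric dichotomy of Lemma \ref{eqaaa} enters: either the transverse directions are genuinely quantitatively transverse to the relevant subspace (so one gets a Córdoba-type $L^2$ square-function gain from the transversality, combined with the multilinear Bennett--Carbery--Tao--type estimate implicit in the $k$-broad formulation), or the relevant caps are confined to a thin slab of angular width $K^{-2}$, which forces low complexity and lets us recurse. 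For the tangential caps, one uses that the cone restricted to a slab, intersected with $N_{R^{1/2+\delta}}(Z)$, behaves like a lower-dimensional piece, and one invokes the inductive hypothesis at the reduced scale $R^{1/2}$ together with a nested-tubes / lower-dimensional $k$-broad estimate — here the relevant exponent $p\ge 2\frac{n+k+1}{n+k-1}$ is exactly what makes the bookkeeping balance, matching the Guth exponent shifted by one dimension.

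\textbf{Putting it together and the main obstacle.} Having the two sub-cases, one optimizes the choice between the cellular bound and the algebraic bound; the worst case is driven by the tangential contribution, and the gain $R^{-\varepsilon}$ per dyadic scale of $R$ comes from the slightly super-critical nature of $p$ together with the $R^{O(\delta)}$ and $K^{O(1)}$ losses being absorbable once $\delta\ll\varepsilon^2$ and $K\le R^{\varepsilon^2}$. I expect the main obstacle to be the algebraic/tangential case: unlike the circular cone treated in \cite{OW}, we do not have the rotational symmetry that makes the tangential cone piece isometric to a standard truncated cone, so we must carefully quantify, via Lemma \ref{eqaaa}, that a tangential wave packet stays in a controlled $R^{\delta}$-neighborhood of a slab (this uses Lemma \ref{tanc} on the behavior of wave packets across scales) and that the transversality estimate ${\rm Ang}(V,W)\gtrsim K^{-4}$ is uniform over the class $\mathbf\Phi$ with constants depending only on the ellipticity bounds $[1/2,2]$ on ${\rm Hess}(\phi)$. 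A secondary technical point is ensuring that all the constants in the polynomial partitioning and in the passage between scales $R\to R^{1/2}$ depend only on finitely many derivatives of $\phi$, so that the induction on $R$ does not accumulate an unbounded constant; this is why we work with the quantitative class $\mathbf\Phi$ rather than an abstract convexity hypothesis.
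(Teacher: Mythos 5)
Your outline is, in broad strokes, the paper's own approach: Theorem \ref{Theorem-5} is proved by the Guth--Ou--Wang polynomial partitioning scheme adapted to the cone $(\xi,\phi(\xi))$, $\phi\in\mathbf\Phi$, with the cellular/algebraic dichotomy, a tangential/transverse split in the algebraic case, and the geometric dichotomy of Lemma \ref{eqaaa} (either ${\rm Ang}(V,W)\gtrsim K^{-4}$ or confinement to a $K^{-2}$-slab) supplying the input for the transverse step; your diagnosis of the main new difficulty -- absence of the circular cone's symmetry, hence the need for the uniform geometric lemma of Section \ref{section-4} and the multiscale wave packet comparison of Lemma \ref{tanc} -- is exactly where the paper puts the novelty.

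However, several structural points in your sketch deviate from what actually makes the induction close, and as stated they are gaps. First, the dimension induction is not over ``the smallest affine subspace containing the wave packets'' but over the dimension $m$ of a transverse complete intersection $Z$ to which the wave packets are $R^{-1/2+\delta_m}$-tangent (Proposition \ref{pro12}); the algebraic alternative of Theorem \ref{poly} produces an algebraic variety $Y$ of dimension $m-1$, not an affine subspace, and the base case is Lemma \ref{le3} (for $m\leq k-1$ the broad norm is ${\rm RapDec}(R)$), not a trivial orthogonality bound. Second, the recursion in the algebraic case is not at scale $R^{1/2}$: one covers $N_{R^{1/2+\delta_m}}(Z)$ by balls $B_j$ of radius $\rho$ determined by $\rho^{1/2+\delta_{m-1}}=R^{1/2+\delta_m}$, so $\rho\approx R^{1-O(\delta)}$, and the $R^{1/2+\delta_m}$-balls enter only in the tangency definition and in the transverse equidistribution lemma; recursing at $R^{1/2}$ would not match the exponents $\bar p(k,m)$ in the inductive hypothesis, and the accumulated losses would not be absorbable. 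Third, in the slab alternative $b)$ of Lemma \ref{eqaaa} the relevant normals lie within $\sim K^{-2}$ of a single direction, so by the very definition of the $k$-broad norm that contribution vanishes, as in \eqref{tangt}; there is no ``low-complexity recursion'' there, and the gain in the transverse case is the $L^2$ transverse equidistribution estimate for wave packets tangent to the translates $Z+b$, not a C\'ordoba square-function or Bennett--Carbery--Tao argument. Finally, the induction also runs on the auxiliary parameter $A$, since each tangential/transverse (and essential/tail) split uses the broad-norm triangle inequality and degrades $A\to A/2$; omitting $A$ from the inductive set-up leaves the bookkeeping of these finitely many splittings unjustified.
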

	
	As a direct consequence of Theorem \ref{Theorem-5} and  Lemma \ref{lemma-1},
	we have the following $L^p$ estimate. 
	\begin{corollary}\label{cor2}
		For any $2\leq k\leq n+1$ and $\varepsilon>0$,
		there is a large constant $A$ such that
		\beq\label{eq-29}
		\|e^{it\phi(D)}f\|_{{\rm BL}_{k,A}^p(C_R^{n+1})}\lesssim_{\varepsilon}R^{n(\f{1}{2}-\f{1}{p})+\varepsilon} \|f\|_{L^p(\R^n)},\;\;{\rm supp}\;\widehat{f}\subset {\rm A}(1),
		\eeq
		for $p\geq 2\frac{n+k+1}{n+k-1}$.
	\end{corollary}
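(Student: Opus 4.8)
The plan is to deduce Corollary \ref{cor2} from Theorem \ref{Theorem-5} by a routine transference between the $L^2$-based extension estimate and the $L^p$-based estimate for $e^{it\phi(D)}$, in the same spirit as the passage from \eqref{eq:7} to \eqref{eq:3} in Corollary \ref{lemma-2}, but now carried through the $k$-broad ``norm''. First I would record that the $k$-broad ``norm'' $\|\cdot\|_{{\rm BL}_{k,A}^p}$ enjoys the same localization and triangle-inequality properties as the ordinary $L^p$ norm: it is built from the quantities $\mu_{Ef}(B_{K^2}^{n+1})$ summed over a finitely overlapping cover of $C_R^{n+1}$, and by the triangle inequality \eqref{BT} it is subadditive under decompositions $f = \sum_\ell f_\ell$ (up to constants depending on $A$). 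This lets me reduce matters to a fixed ball of radius $R$ and then glue.

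The key computation is to turn the $L^2$ right-hand side of \eqref{eq-211} into the $L^p_x$ right-hand side of \eqref{eq-29}. I would start from $f$ with $\widehat f$ supported in ${\rm A}(1)$ and write $e^{it\phi(D)}f(x) = Eg(x,t)$ where $g = \widehat f$ lives on ${\rm A}(1)$, so that Theorem \ref{Theorem-5} gives $\|e^{it\phi(D)}f\|_{{\rm BL}_{k,A}^p(C_R^{n+1})} \lesssim_{\varepsilon,\phi} R^\varepsilon \|\widehat f\|_{L^2({\rm A}(1))} \sim R^\varepsilon \|f\|_{L^2(\R^n)}$. To upgrade $\|f\|_{L^2}$ to $\|f\|_{L^p}$ one uses that $f$ is (essentially) supported, after the reduction, on a ball of radius comparable to $R$: by Lemma \ref{lemma-1} and the spatial localization encoded there, only the part of $f$ living on $B_{R^{1+\varepsilon}}$ contributes up to ${\rm RapDec}(R)$ errors, and on such a ball Bernstein/Hölder gives $\|f\|_{L^2(B_R)} \lesssim R^{n(\frac12 - \frac1p)} \|f\|_{L^p(B_R)}$ since $p \geq 2$. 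Summing over a finitely overlapping cover of $\R^n$ by balls of radius $R$, using the subadditivity of the broad ``norm'' under the associated partition of $f$ and the bounded overlap to recombine the $L^p$ pieces (here one wants $p \geq 2$ again, via $\|\cdot\|_{\ell^2} \leq \|\cdot\|_{\ell^p}$ or the reverse as appropriate), yields \eqref{eq-29} with the loss $R^{n(\frac12-\frac1p)+\varepsilon}$ and the ${\rm RapDec}(R)$ tails absorbed.

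Concretely the steps are: (i) state and verify the elementary properties of $\|\cdot\|_{{\rm BL}_{k,A}^p}$ needed — finite subadditivity under decomposition of $f$ via \eqref{BT}, and compatibility with the covering of $C_R^{n+1}$ — so that one can localize to a single $B_R^n \times [-R,R]$; (ii) on a fixed ball $B_R^n(x_0)$, apply Lemma \ref{lemma-1} to replace $f$ by $\Psi_{B_{R^{1+\varepsilon}}^n(x_0)} f$ modulo rapidly decaying terms; (iii) apply Theorem \ref{Theorem-5} to the truncated piece to bound its broad norm by $R^\varepsilon \|\Psi_{B_{R^{1+\varepsilon}}^n(x_0)} f\|_{L^2}$, then invoke Hölder on the ball $B_{R^{1+\varepsilon}}^n(x_0)$ to pass to $R^{n(\frac12-\frac1p)+O(\varepsilon)}\|f\|_{L^p(w_{B_{R^{1+\varepsilon}}^n(x_0)})}$; (iv) sum over the cover of $\R^n$, using bounded overlap and Minkowski/Hölder in $\ell^p$ to reassemble $\|f\|_{L^p(\R^n)}$, and absorb the ${\rm RapDec}(R)$ contributions; (v) rename $\varepsilon$ to conclude \eqref{eq-29} for $p \geq 2\frac{n+k+1}{n+k-1}$.

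The only mildly delicate point — and the step I expect to require the most care — is the bookkeeping in (i) and (iv): one must check that the $k$-broad ``norm'' really does behave subadditively when $f$ is split according to the spatial cover $\{B_R^n(x_k)\}$, so that the $L^p$ gluing over $k$ works just as in Corollary \ref{lemma-2}. This is essentially forced by the triangle inequality \eqref{BT} together with the fact that $\mu_{Ef}$ is defined as a sum of integrals over a finitely overlapping family of $K^2$-balls, but one has to be slightly attentive to the constant $A$ being consumed (it only needs to be split finitely many times, consistent with $1 \ll A \lesssim K^\varepsilon$). Everything else — the Hölder/Bernstein step converting $L^2$ to $L^p$ on a ball of radius $\sim R$, and the handling of the rapidly decaying tails — is standard and identical to what is already carried out for Corollary \ref{lemma-2}.
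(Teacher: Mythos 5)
Your proposal is correct and follows essentially the same route as the paper: spatial localization via Lemma \ref{lemma-1} (combined with the broad-norm triangle inequality \eqref{BT} to discard the ${\rm RapDec}(R)$ tails), then Theorem \ref{Theorem-5} applied to $\Psi_{B_{R^{1+\varepsilon}}^n}f$, and finally H\"older's inequality on the ball of radius $R^{1+\varepsilon}$ to pass from $\|f\|_{L^2}$ to $R^{n(\frac12-\frac1p)+O(\varepsilon)}\|f\|_{L^p}$. Your steps (i) and (iv) of gluing over a cover of $\R^n$ by $R$-balls are superfluous here, since \eqref{eq-29} is stated only on the single cylinder $C_R^{n+1}$ (that gluing is what Corollary \ref{lemma-2} does for the global-in-space estimate), but including them does not affect the correctness of the argument.
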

	\begin{proof}
		We decompose $f$ in spatial space, and obtain by Lemma \ref{lemma-1}
		$$\big|e^{it\phi(D)}f(x)\big|\le \big|e^{it\phi(D)}(\Psi_{B_{R^{1+\varepsilon}}^n}f)\big|+{\rm RapDec}(R)\|f\|_{L^p}.$$
		Then \eqref{eq-29} follows from Theorem \ref{Theorem-5} and  H\"older's inequality.
	\end{proof}
	To prove Theorem \ref{Theorem-5}, we need to employ the polynomial partitioning argument. Let us first collect some useful results from \cite{Guth} and \cite{OW}.
	
	\begin{definition}[Transverse complete intersection]
		Fix an integer $m\in [1,n+1]$ and let $P_1,\cdots,P_{n+1-m}$ be polynomials on $\R^{n+1}$ whose common zero set is denoted by $Z(P_1,\cdots, P_{n+1-m})$. We use $D_Z$ to denote the degree of $Z(P_1,\cdots, P_{n+1-m})$ which is the highest degree among those polynomials $P_i$'s. The variety $Z(P_1,\cdots, P_{n+1-m})$ is called a transverse complete intersection if
		$$\nabla P_1(x)\wedge \cdots \wedge \nabla P_{n+1-m}(x)\neq 0, \forall x\in Z(P_1,\cdots, P_{n+1-m}).$$
		
	\end{definition}
	
	The following theorem is essentially proved in Section 8.1 of \cite{Guth} while not explicitly stated there. 
	\begin{theorem}[\cite{Guth}]\label{poly}
		Let $r\gg 1, d\in \mathbb{N}$ and  $F\in L^1(\R^n)$ be non-negative and supported on $B_r^n\cap N_{r^{1/2+\delta}}Z$ for some $0<\delta \ll 1$, where $Z$ is an $m$-dimensional transverse complete intersection of degree $D_Z=d$. Then, at least one of the following cases holds: 
		\begin{description}
			\item[(1) Cellular case] There exists a polynomial $P: \R^n \rightarrow \R$ of degree $D=D(d)$ such that there exists $\sim D^m$ cells $O_i\subset Z\backslash N_{r^{1/2+\delta}}Z(P)$ with $O_i \subset B_{r/2}^n$ and 
			\beqq
			\int_{O}F\sim D^{-m}\int_{\R^n}F, \quad \text{for all}\;\; O.
			\eeqq
			Furthermore, each tube of length $r$ and radius $r^{1/2+\delta}$ intersects at most $O(D)$ cells.
			\item[(2) Algebraic case] There exists an $(m-1)$-dimensional transverse complete intersection $Y$ of degree at most $O(D)$ such that 
			\beqq
			\int_{B_r^n\cap N_{r^{1/2+\delta}}Z}F\lesssim \int_{B_r^n\cap N_{r^{1/2+\delta}}Y}F.
			\eeqq
		\end{description}
		
	\end{theorem}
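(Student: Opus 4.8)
The plan is to extract the argument from Section~8.1 of \cite{Guth}, of which we outline only the skeleton. Fix once and for all a degree parameter $D=D(d)$, a sufficiently large power of $d$ to be pinned down by the counting bounds below, and recall that $F\ge 0$ is concentrated on $B_r^n\cap N_{r^{1/2+\delta}}Z$ with $Z=Z(P_1,\dots,P_{n+1-m})$ a transverse complete intersection of dimension $m$ and degree $d$. First I would apply the polynomial ham--sandwich theorem iteratively to produce a nonzero polynomial $P$ of degree at most $D$ that equidistributes the mass of $F$: writing the complement of $Z(P)$ as a disjoint union of connected components, which we call \emph{cells}, one arranges that each cell $O$ carries $\int_O F\lesssim D^{-m}\int F$. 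The one place where the dimension of $Z$ enters rather than the ambient dimension is that the number of cells meeting $N_{r^{1/2+\delta}}Z$ is only $O(D^m)$; this follows from a Milnor--Thom / Barone--Basu type bound on the number of connected components of $Z\setminus Z(P)$ in terms of $\deg Z\cdot\deg P=O(D)$.

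With $P$ fixed, I would split $\int_{B_r^n\cap N_{r^{1/2+\delta}}Z}F$ into the part supported in the union of the (slightly shrunk) relevant cells and the part supported in $B_r^n\cap N_{r^{1/2+\delta}}Z\cap N_{Cr^{1/2+\delta}}Z(P)$. If the first term dominates we are in the \emph{cellular case}: pigeonholing among the $O(D^m)$ relevant cells keeps $\sim D^m$ of them, each with $\int_{O_i}F\sim D^{-m}\int F$, and one shrinks each $O_i$ to lie inside $B_{r/2}^n$. The bound of $O(D)$ cells per tube then holds because the core line of a tube of length $r$ and radius $r^{1/2+\delta}$ meets $Z(P)$ in at most $D$ points, hence passes through at most $D+1$ components, and fattening to the tube costs only a constant since the cells are separated by the $r^{1/2+\delta}$-wall of $Z(P)$; the degenerate possibility that the core line lies in $Z(P)$ is handled exactly as in \cite{Guth}, by arranging that $P$ has no low-degree linear factors, or by a cell-merging argument. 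If instead the second term dominates, at least half of $\int F$ sits in an $r^{1/2+\delta}$-neighborhood of $Z\cap Z(P)$, which has dimension at most $m-1$, and we turn to the algebraic case.

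In the \emph{algebraic case} I would decompose $Z\cap Z(P)$: on the open set where $\nabla P_1\wedge\dots\wedge\nabla P_{n+1-m}\wedge\nabla P\ne 0$ it is precisely the $(m-1)$-dimensional transverse complete intersection $Z(P_1,\dots,P_{n+1-m},P)$ of degree $O(D)$, while the complementary degenerate locus, cut out by further polynomials of degree $O(D)$, can itself be covered by $O(1)$ transverse complete intersections of dimension at most $m-1$ and degree $O(D)$, by iterating the same observation or by invoking the decomposition of \cite{Guth} directly. Pigeonholing over this bounded family of candidate varieties produces a single $Y$, an $(m-1)$-dimensional transverse complete intersection of degree $O(D)$, satisfying $\int_{B_r^n\cap N_{r^{1/2+\delta}}Y}F\gtrsim\int_{B_r^n\cap N_{r^{1/2+\delta}}Z}F$, which is the asserted inequality.

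I expect the difficulty to be entirely real-algebraic-geometric rather than analytic: making rigorous (i) the $O(D^m)$ bound on the number of relevant cells with constants uniform in the degrees, and (ii) the decomposition of $Z\cap Z(P)$ into finitely many transverse complete intersections of dimension $m-1$ and degree $O(D)$. Both ingredients are due to Guth; granting them, the two cases above reduce to a routine pigeonhole-plus-incidence-geometry argument, and the final choice of $D=D(d)$ is forced by demanding that the $\sim D^m$ cells and the $O(1)$ candidate varieties be produced simultaneously.
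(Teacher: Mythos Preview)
The paper does not give its own proof of this theorem: it is simply quoted from \cite{Guth}, with the remark that it ``is essentially proved in Section~8.1 of \cite{Guth} while not explicitly stated there.'' Your sketch is a faithful outline of Guth's argument from that section---polynomial partitioning followed by the cellular/algebraic dichotomy, with the $O(D^m)$ cell count coming from Milnor--Thom type bounds on $Z\setminus Z(P)$ and the algebraic case handled by decomposing $Z\cap Z(P)$ into transverse complete intersections---so there is nothing to compare against, and your proposal is appropriate.
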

	
	\begin{proposition}
		Let $T$ be a cylinder of radius $r$ with central line $\ell$ and suppose that $Z=Z(P_1,\cdots, P_{n+1-m})\subset \R^{n+1}$ is a transverse complete intersection, where the polynomials $P_j$ have degree at most $D$. For any $\alpha$, define
		$$Z_{>\alpha}:=\{z\in Z: {\rm Angle}(T_zZ,\ell)>\alpha\}.$$
		Then $Z_{>\alpha}\cap T$ is contained in a union of $\lesssim D^n$ balls of radius $\lesssim r\alpha^{-1}$.
	\end{proposition}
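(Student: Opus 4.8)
The plan is to reduce the statement to a quantitative transversality estimate between the variety $Z$ and the central line $\ell$ of the cylinder, and then invoke a standard polynomial geometry fact about the number of connected components of a semialgebraic set. First I would fix a point $z_0 \in Z_{>\alpha} \cap T$. By definition, the tangent space $T_{z_0}Z$ makes an angle larger than $\alpha$ with $\ell$; since $T_{z_0}Z$ is the orthogonal complement in $\R^{n+1}$ of the span of $\nabla P_1(z_0), \dots, \nabla P_{n+1-m}(z_0)$, this angle condition says that the projection of a unit direction vector $v_\ell$ of $\ell$ onto the normal space $N_{z_0}Z = \operatorname{span}\{\nabla P_j(z_0)\}$ has length $\gtrsim \alpha$. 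Equivalently, writing $v_\ell$ for the unit vector along $\ell$, we have $|v_\ell - \operatorname{Proj}_{T_{z_0}Z}v_\ell| \gtrsim \alpha$.

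Next I would translate this into a bound on how far $z_0$ can travel along $\ell$ while staying in $T$. Parametrize $\ell$ by arclength through the center of $T$; the cylinder $T$ has radius $r$, so every point of $Z_{>\alpha}\cap T$ lies within distance $r$ of $\ell$ in the directions orthogonal to $\ell$. The key point is that at each such $z_0$, the variety $Z$ is "turning away" from $\ell$ at a definite rate controlled by $\alpha$: along the segment of $\ell$ of length $L$, a point of $Z$ with normal cone making angle $>\alpha$ with $\ell$ forces $Z$ to deviate transversally. More precisely, I would cover $\ell\cap (\text{a slightly enlarged }T)$ by $\sim L/(r\alpha^{-1})$ subsegments of length $\sim r\alpha^{-1}$ and show that over each such subsegment the portion of $Z_{>\alpha}$ lying over it is contained in $O(D^n)$ balls of radius $\lesssim r\alpha^{-1}$. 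This uses the Bézout-type bound: the intersection of $Z$ (cut out by $n+1-m$ polynomials of degree $\le D$) with a generic affine subspace complementary to $\ell$ has $O(D^{n+1-m})$ points, and more robustly, the set $Z_{>\alpha}\cap T$ — being a semialgebraic set defined by $P_j = 0$ together with the inequality encoding the angle condition (a polynomial inequality in the $\nabla P_j$, hence of degree $O(D)$) — has $O(D^{n+1})$ connected components by the Milnor–Thom / Oleĭnik–Petrovskiĭ bounds, and I would show each component has diameter $\lesssim r\alpha^{-1}$ along $\ell$ because of the transversality, and diameter $\lesssim r$ transversally since it sits in $T$.

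To make the diameter bound along $\ell$ precise, I would argue by contradiction or by a mean value / integration argument: if a connected piece $\Sigma \subset Z_{>\alpha}\cap T$ had two points $z_1, z_2$ with $\langle z_1 - z_2, v_\ell\rangle$ large compared to $r\alpha^{-1}$, then since $\Sigma$ stays within the radius-$r$ tube $T$, a path in $\Sigma$ from $z_1$ to $z_2$ would at some point have tangent direction nearly parallel to $\ell$ (its average velocity in the $v_\ell$ direction must be $\gtrsim \alpha$ times its speed), contradicting that every tangent space along $\Sigma$ makes angle $>\alpha$ with $\ell$. Thus each component, and hence each of the $O(D^{n+1})$-many balls needed to cover the components, has radius $\lesssim r\alpha^{-1}$; absorbing the dimension-independent constants this gives the claimed bound $Z_{>\alpha}\cap T \subset \bigcup (\lesssim D^n \text{ balls of radius } \lesssim r\alpha^{-1})$.

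The main obstacle will be the second step: turning the pointwise angle condition into the quantitative "travel distance $\lesssim r\alpha^{-1}$ along $\ell$" statement with the correct power $D^n$ rather than a worse power of $D$. The naive Milnor–Thom bound gives $D^{n+1}$ components, which is off by a factor of $D$; to recover $D^n$ one should instead slice by hyperplanes transverse to $\ell$ and use that $Z$ meets each such hyperplane in a variety cut out by $n+1-m$ degree-$\le D$ polynomials in $n$ variables — giving the sharper count — and then run the transversality argument fiberwise, which is exactly the structure of the proof of the analogous statement (Lemma 5.7) in Guth's work and in \cite{OW}; I would follow that template, being careful that the angle condition is stable under the small perturbations introduced by the slicing and by replacing $T_zZ$ with its nearby approximations on each ball.
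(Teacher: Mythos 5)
First, note that the paper itself offers no proof of this proposition: it is one of the results quoted from \cite{Guth} (Lemma 5.7 there, also used in \cite{OW}), so your attempt can only be measured against that argument, and against it there is a genuine gap at the central step. Your claim that each connected component of $Z_{>\alpha}\cap T$ has extent $\lesssim r\alpha^{-1}$ along $\ell$, and the mean-value argument offered for it, are both incorrect. The angle hypothesis says that the transversal part of every unit tangent vector has length $\gtrsim\alpha$; membership in $T$ bounds only the \emph{net} transversal displacement of a path by $2r$, not its \emph{total} transversal variation, and it is the latter that your averaging needs. A curve confined to the tube can spiral, keeping its tangent at a fixed angle, say $2\alpha$, from $\ell$ while advancing arbitrarily far along $\ell$: the helix of radius $r$ and suitable pitch does exactly this, so the implication ``large axial displacement forces some tangent to make angle $\leq\alpha$ with $\ell$'' is false in $\R^{n+1}$ (the mean-value principle you invoke is a planar fact and fails in ambient dimension $\geq 3$). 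Nor is this repaired by algebraicity at the level of components: one can realize an algebraic spiral as a transverse complete intersection. For instance in $\R^{3}$ take $P_1=x_2\big(a^2+b^2\big)-2rab$ and $P_2=x_1\big(a^2+b^2\big)-r\big(b^2-a^2\big)$, where $a=a(x_3),b=b(x_3)$ are polynomial approximations of $\sin(\alpha x_3/r)$ and $\cos(\alpha x_3/r)$ of degree $\sim d$ on $|x_3|\leq L\sim d\,r\alpha^{-1}$, chosen without common real zero; the gradients are never parallel, the zero set is a single smooth curve lying on the cylinder of radius $r$, it winds $\sim d$ times around the axis with tangent angle comparable to $\alpha$ throughout $|x_3|\leq L$, and hence contributes one connected component of $Z_{>\alpha}\cap T$ of diameter $\sim D\,r\alpha^{-1}$ with $D\sim d$. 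This does not contradict the proposition (that component is covered by $\sim D$ balls), but it does show that ``one ball of radius $r\alpha^{-1}$ per connected component'' cannot yield it, no matter how sharply the components are counted.

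Consequently the degree must enter not only the count of pieces but the cutting of each piece into balls of radius $\lesssim r\alpha^{-1}$ --- for example by bounding, through Bezout-type or connected-component counts for suitably sliced semialgebraic sets, how often a component can wind around or double back along the axis; this is precisely where the cited proof in \cite{Guth} uses the polynomial structure, rather than any metric mean-value argument. Your closing paragraph gestures at slicing transversally to $\ell$, but only to improve the component count from $D^{n+1}$ to $D^{n}$ (a secondary issue here: in the application $D=D(\varepsilon)=O(1)$, and with ambient dimension $n+1$ the weaker exponent would be harmless), while the ``fiberwise transversality argument'' you propose to run over each slice is again the invalid diameter claim. As written the proposal therefore does not prove the statement; supplying the degree-dependent cutting step is the missing idea, and doing so essentially reproduces Guth's Lemma 5.7.
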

	\begin{definition}\label{tangto}
		Let $Z$ be an $m$-dimensional variety in $\R^{n+1}$. A tube $T_{\nu,w}^\ell$ is said to be $\gamma$-tangent to $Z$ in $B_{R}^{n+1}$ if 
		$$T_{\nu,w}^\ell \subset N_{\gamma R}(Z)\cap B_{R}^{n+1},$$
		and for all $z\in Z\cap N_{\gamma R}(T_{\nu,w}^\ell)$ there holds 
		$${\rm Ang}(T_zZ, {\bf L}(\nu))\leq \gamma.$$
	\end{definition}

	\begin{definition}\label{defcon}
		Let $Z$ be a transverse complete intersection of degree $D\sim O(1)$ and dimension $m$ inside $B_R^{n+1}$. Define
		\beqq
		\mathbb{T}_Z:=\{(\nu,w,\ell): T_{\nu,w}^\ell\; \text{is}\; R^{-1/2+\delta_m}\text{-tangent to } Z\; \text{in}\; B_R^{n+1}\},
		\eeqq
		where $\delta_m\geq 0$ is a fixed small parameter for each dimension $m$.
	\end{definition}

	Theorem \ref{Theorem-5} can be deduced from the following proposition.
	\begin{proposition}\label{pro12}
		For $\varepsilon>0$, there are small parameters
		$$0<\delta\ll \delta_n\ll\delta_{n-1}\ll\cdots \ll \delta_1\ll \delta_0\ll \varepsilon,$$
		and a large constant $\bar A$ such that the following holds. Let $1\leq m\leq n+1$ and $Z=Z(P_1,\cdots, P_{n+1-m})$ be a transverse complete intersection with degree  $D_Z$. Suppose that $f$ is concentrated on a union of wave packets coming from $\mathbb{T}_{Z}$. Then, for any $1\leq A\leq \bar A$ and radius $ R\geq 1$, we have
		\beq \label{inmain}
		\|Ef\|_{{\rm BL}_{k,A}^p(C_R^{n+1})}\leq C(K,\varepsilon,m,D_Z)R^{m\varepsilon}R^{\delta(\log{\bar A}-\log A)}R^{-e+1/2}\|f\|_{L^2}
		\eeq
		for all
		\beqq
		2\leq p\leq \bar p(k,m),
		\eeqq
		where
		\beqq
		e:=\f{1}{2}\big(\f{1}{2}-\f{1}{p}\big)(n+1+k)
		\eeqq
		and 
		\beqq
		\bar p(k,m):=\begin{cases}
			2\dfrac{m+k}{m+k-2},\; &k<m;\\
			\dfrac{2m}{m-1}+\delta,\;& k=m.
		\end{cases}
		\eeqq
	\end{proposition}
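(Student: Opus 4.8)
The plan is to prove Proposition \ref{pro12} by induction on the dimension $m$ of the variety $Z$ and, for each fixed $m$, by induction on scales in the radius $R$, following the polynomial partitioning scheme of Guth \cite{Guth} and its refinement by Ou--Wang \cite{OW}, but carried out uniformly over the phase class $\mathbf\Phi$. The base case $m=n+1$, in which $Z=\R^{n+1}$ and there is no algebraic constraint, should be handled by combining the $k$-broad triangle/H\"older inequalities with the Bourgain--Guth type broad-narrow recursion: partition $\mathrm{A}(1)$ into $K^{-1}$-caps $\tau$, and either one cap dominates (in which case one uses a parabolic rescaling, reducing $R$ to $R/K^2$ via the rescaling lemma of Section \ref{paralemma} while staying inside the class $\mathbf\Phi(R)$) or the $k$-broad part dominates and one applies the geometric dichotomy of Lemma \ref{eqaaa}. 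For the inductive step, given that $f$ is concentrated on wave packets in $\mathbb T_Z$ for an $m$-dimensional transverse complete intersection, apply Theorem \ref{poly} to the density $F=|Ef|^p$ restricted to $B_R^n\cap N_{R^{1/2+\delta}}Z$.

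In the cellular case, the $\sim D^m$ cells $O_i$ are distributed so that each $R^{1/2+\delta}$-tube meets $O(D)$ of them; hence $\|Ef\|_{BL_{k,A}^p(C_R^{n+1})}^p \lesssim \sum_i \|Ef_i\|_{BL_{k,A}^p(O_i)}^p$ where $f_i$ collects the wave packets passing through $O_i$, and by the tube-counting each wave packet is reused $O(D)$ times so $\sum_i \|f_i\|_{L^2}^2 \lesssim D\|f\|_{L^2}^2$. Since each cell has diameter $\sim R$ but the relevant wave packets, once localized, live at a genuinely smaller ``transverse'' scale, one iterates the cellular case $\sim \log$-many times until either the algebraic case triggers or the radius has dropped enough; pigeonholing over the number of wave packets per cell and invoking the induction hypothesis at the smaller scale recovers \eqref{inmain} with the $R^{\delta(\log\bar A-\log A)}$ bookkeeping absorbing the logarithmic losses (this is exactly why $A$ must be taken $\leq\bar A$ and large). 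In the algebraic case, Theorem \ref{poly} produces an $(m-1)$-dimensional transverse complete intersection $Y$ with $\int_{B_R^n\cap N_{R^{1/2+\delta}}Z}F \lesssim \int_{B_R^n\cap N_{R^{1/2+\delta}}Y}F$; one then further splits the wave packets into those \emph{tangent} to $Y$ (handled by the induction hypothesis in dimension $m-1$, which improves the exponent $e$ since $\bar p(k,m-1)$ and the power of $R$ both get better) and those \emph{transverse} to $Y$, for which the Proposition following Definition \ref{tangto} confines $Z_{>\alpha}\cap T$ to few small balls, giving a favorable count; summing over these balls and using a lower-dimensional $L^2$ orthogonality closes the estimate.

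The key quantitative input that makes the exponent $e=\tfrac12(\tfrac12-\tfrac1p)(n+1+k)$ come out is the interplay between the dimension drop $m\to m-1$ in the algebraic case and the improved $L^2\to BL^p$ relation available on lower-dimensional varieties, which in turn rests on the geometric dichotomy Lemma \ref{eqaaa}: when a $(k-1)$-plane $V$ is nearly tangent to $L$, the relevant caps are trapped in a $1\times K^{-2}\times\cdots\times K^{-2}$ slab, so the $k$-broad norm over such configurations is negligible, whereas when $V$ is transverse one gets the angle separation $\mathrm{Ang}(V,W)\gtrsim K^{-4}$ needed to run the transversality/Córdoba $L^2$ argument. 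One also needs the wave-packet comparison lemmas of Section \ref{wavee} to pass coherently between the scale $R$ and the smaller scales appearing in the cells, ensuring that tangency to $Z$ is (approximately) preserved under localization.

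I expect the main obstacle to be the algebraic/transverse case combined with the dimensional induction: one must verify, uniformly over all phases $\phi\in\mathbf\Phi$ (not just $|\xi|$), that the variety $L$ and its linearization $\bar V$ constructed in Section \ref{section-4} interact correctly with the tangency definition (Definition \ref{tangto}) and with parabolic rescaling, so that the hypotheses ``$f$ concentrated on $\mathbb T_{Y}$'' for the $(m-1)$-dimensional $Y$ are actually met after rescaling and the constants $C(K,\varepsilon,m,D_Z)$ stay under control through the induction. A secondary but delicate point is the careful allocation of the small parameters $\delta\ll\delta_n\ll\cdots\ll\delta_0\ll\varepsilon$ and of $A$ versus $\bar A$ so that the accumulated $R^{m\varepsilon}$ and $R^{\delta(\log\bar A-\log A)}$ factors never exceed what is claimed; this is routine in spirit but must be threaded consistently through both inductions.
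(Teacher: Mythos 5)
Your global scheme (polynomial partitioning, cellular/algebraic dichotomy, tangent/transverse splitting, induction on $m$, $R$ and $A$) is the same as the paper's, but your treatment of the ``base case $m=n+1$'' is a genuine structural error. In the paper's induction, $m=n+1$ is not a base case at all: it is the target case (taking $Z=\R^{n+1}$, $A=\bar A$, $p=\bar p(k,n+1)$ yields Theorem \ref{Theorem-5}) and it is handled by exactly the same partitioning step as every other $m$. The true base cases are: $R$ bounded, $A=1$ (absorbed by the factor $R^{\delta(\log\bar A-\log A)}$ via a trivial bound), and, crucially, $m\le k-1$, where Lemma \ref{le3} shows that the $k$-broad norm of a function concentrated on wave packets tangent to a variety of dimension $\le k-1$ is ${\rm RapDec}(R)\|f\|_{L^2}$; this last base case is what terminates the dimensional descent and it is missing from your proposal. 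Worse, the method you propose for $m=n+1$ --- a Bourgain--Guth broad--narrow recursion combined with the parabolic rescaling Lemma \ref{pro2} --- cannot be used inside the proof of Proposition \ref{pro12}: Lemma \ref{pro2} is an $L^p\to L^p$ statement about the linear quantity $Q_p(R)$, while \eqref{inmain} has $\|f\|_{L^2}$ on the right-hand side; rescaling by $K$ does not preserve the $k$-broad structure (a single cap $\tau$, which contributes nothing to the broad norm, becomes the whole frequency support after rescaling); and the broad--narrow decomposition plus narrow decoupling is precisely the content of Section \ref{section-7}, which takes Proposition \ref{pro12} as its input, so your route is circular.

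Two further points. First, polynomial partitioning must be applied to the broad-norm density $F=\frac{1}{|B_{K^2}^{n+1}|}\sum_{B_{K^2}^{n+1}\subset C_R^{n+1}}\mu_{Ef}(B_{K^2}^{n+1})\chi_{B_{K^2}^{n+1}}$, not to $|Ef|^p$ as you state: the $k$-broad ``norm'' is defined ball-by-ball through a min--max and is not an integral of $|Ef|^p$, so partitioning the latter does not give the cellular/algebraic dichotomy for $\|Ef\|_{{\rm BL}^p_{k,A}}$. Second, in the transverse case the ``favorable count'' you invoke (the proposition on $Z_{>\alpha}\cap T$) is only bookkeeping; the step that actually produces the exponent is the transverse equidistribution estimate $\|\tilde g_{{\rm ess},b}\|_{L^2}^2\le R^{O(\delta_m)}\big(R^{1/2}/\rho^{1/2}\big)^{-(n+1-m)}\|g_{\rm ess}\|_{L^2}^2$ --- this is where Lemma \ref{eqaaa} enters, either killing the broad norm on balls in case $b)$ or providing ${\rm Ang}(V,W)\gtrsim K^{-4}$ in case $a)$ --- followed by an induction on scale at the \emph{same} dimension $m$ over balls $B_j$ of radius $\rho$ (with the translated varieties $Z+b$ and with $A$ reduced to $A/2$); the gain $(R/\rho)^{-(n+1-m)(p/2-1)/2}$ then cancels the loss exactly at $p=2\frac{m+k}{m+k-2}$, which is how $\bar p(k,m)$ and $e$ emerge. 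Your last paragraph gestures at these ingredients, but without the equidistribution lemma and the same-dimension scale induction the transverse case does not close as written.
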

	
	When $m=n+1$, by taking $Z=\R^{n+1}$ and choosing $A=\bar A$ and $p=\bar p(k,n+1)$, we have $-e+\f{1}{2}=0$. Therefore, Theorem \ref{Theorem-5} follows from Proposition \ref{pro12}.
	
	For $p=2$, Proposition \ref{pro12} follows directly from the trivial $L^2$ estimate:
	$$\|Ef\|_{L^2(C_R^{n+1})}\leq CR^{1/2}\|f\|_{L^2}.$$
	Thus, by interpolation and H\"older's inequality of the broad norm, Proposition \ref{pro12} is reduced to the endpoint case $p=\bar p(k,m)$. We  prove Proposition \ref{pro12} by an induction argument. In particular, we will induct on the dimension $m$, the radius $R$, and the parameter $A$. We start by checking the base case of the induction. If $R$ is small, the desired estimate can be deduced by choosing  $C(K,\varepsilon,m,D_Z)$ sufficiently large. If $A=1$, we may choose $\bar A$ sufficiently large such that $R^{\delta(\log \bar A-\log 1)}=R^{10n}$, then the desired estimate will follow  from the following trivial estimate
	$$\|Ef\|_{\br_{k,1}^p(C_R^{n+1})}\leq |C_R^{n+1}|\|f\|_{L^2}.$$
	Finally, we will check the base case for the dimension $m$. This can be deduced from the following lemma from \cite{Guth, OW}. 
	\begin{lemma}[\cite{Guth, OW}]\label{le3}
		If $Ef$ is $R^{-1/2+\delta}$-tangent to a variety $Z$ of degree $O(1)$ and dimension $m\leq k-1$, then
		\beq
		\|Ef\|_{\br_{k,A}^p(B_R^{n+1})}\leq {\rm RapDec}(R)\|f\|_{L^2}.
		\eeq
	\end{lemma}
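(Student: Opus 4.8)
The statement to prove is Lemma~\ref{le3}: if $Ef$ is $R^{-1/2+\delta}$-tangent to a variety $Z$ of dimension $m\le k-1$ and degree $O(1)$, then $\|Ef\|_{\br_{k,A}^p(B_R^{n+1})}\le{\rm RapDec}(R)\|f\|_{L^2}$. The guiding principle is that the $k$-broad norm, by its very definition, only sees contributions that are ``spread out'' over at least $k$ transversal directions of the cone; but tangency to a low-dimensional variety forces all relevant wave packets to cluster near an $m$-plane with $m<k$, so on each ball $B_{K^2}^{n+1}$ the relevant caps $\tau$ all point in essentially the same $(m+1)$-dimensional direction, and the broad norm of the tangential part is therefore negligible.

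First I would fix a ball $B=B_{K^2}^{n+1}\subset C_R^{n+1}$ and analyze which wave packets $Ef_{\nu,w}^\ell$ with $(\nu,w,\ell)\in\mathbb{T}_Z$ can be non-negligible on $B$. By Lemma~\ref{lew}, the only relevant tubes are those with $T_{\nu,w}^\ell\cap B\neq\varnothing$; combined with the tangency condition $T_{\nu,w}^\ell\subset N_{R^{1/2+\delta}}(Z)$ and ${\rm Ang}(T_zZ,{\bf L}(\nu))\le R^{-1/2+\delta_m}$ for $z\in Z\cap N_{R^{1/2+\delta_m}R}(T_{\nu,w}^\ell)$ from Definition~\ref{defcon} and Definition~\ref{tangto}, I would show that at a point $z_0\in Z\cap B$ the directions ${\bf L}(\nu)$ of all relevant caps lie within $O(R^{-1/2+\delta_m})$, hence within $O(K^{-2})$ (since $K\ll R^{\varepsilon}$ and $\delta_m\ll\varepsilon$), of the $m$-dimensional tangent space $T_{z_0}Z$. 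Passing from ${\bf L}(\nu)$ to $G(\nu)$ introduces only $O(R^{-1/2})$ error, so the set $\bigcup G(\nu)$, over relevant $\nu$, lies in the $K^{-2}$-neighborhood of an $m$-plane $V_0:=T_{z_0}Z$.

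Next, to estimate $\mu_{Ef}(B)$, I use the definition of the $k$-broad norm with the specific choice of subspaces: take $V_1=\cdots=V_{m+1}=V_0$ (and the remaining $V_\ell$ arbitrary), legitimate since $m+1\le k$ and we only need $A\ge m+1$ subspaces available, which holds as $A\le\bar A$ with $\bar A$ large. For any cap $\tau$ with ${\rm Ang}(G(\tau),V_0)>K^{-2}$, the preceding step shows $Ef_\tau$ is a sum of negligible wave packets on $B$, so $\int_B|Ef_\tau|^p={\rm RapDec}(R)\|f\|_{L^2}^p$ after accounting for the finitely many ($\lesssim K^{O(1)}$) caps $\tau$ and the bounded overlap. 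Here one uses that $f$ is concentrated on wave packets from $\mathbb{T}_Z$, so the non-tangential part is already a ${\rm RapDec}(R)$ error; and that $Ef_\tau=\sum_{\nu\subset\tau}Ef_\nu$ with each $Ef_\nu$ further decomposed into $Ef_{\nu,w}^\ell$, so Lemma~\ref{lew} applies termwise with only polynomially many terms. Summing over the $\lesssim (R/K^2)^{n+1}$ balls $B$ and taking $p$-th roots converts the per-ball ${\rm RapDec}(R)$ bound into a global ${\rm RapDec}(R)\|f\|_{L^2}$ bound, as desired.

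\emph{Main obstacle.} The technical heart is the geometric claim that tangency of a tube to $Z$ in the sense of Definition~\ref{tangto} genuinely constrains its direction ${\bf L}(\nu)$ to be $O(R^{-1/2+\delta_m})$-close to $T_{z}Z$ at \emph{every} point $z\in Z\cap B$ that the tube meets, not merely at one point --- this requires controlling how $T_zZ$ varies along $Z\cap B$, which is where the degree-$O(1)$ and the bound $D_Z\sim O(1)$ hypotheses enter (a variety of bounded degree has tangent spaces varying slowly on scale $K^2\ll R$). One also must be careful that the scales match: $R^{1/2+\delta}$ (tube thickness / neighborhood of $Z$), $R^{1/2+\delta_m}R=R^{3/2+\delta_m}$ versus the ball radius $K^2$, and the admissible angle $R^{-1/2+\delta_m}$ all need to be compatible with the threshold $K^{-2}$ defining the broad norm, which is guaranteed by the hierarchy $\delta\ll\delta_n\ll\cdots\ll\delta_1\ll\delta_0\ll\varepsilon$ and $K=K_0R^{\tilde\delta}$ with $\tilde\delta$ tiny. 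Since this lemma is quoted from \cite{Guth} and \cite{OW}, I would present the argument at the level of these scale bookkeeping checks and cite the geometric input (variation of tangent spaces on bounded-degree varieties) from those sources rather than reproving it.
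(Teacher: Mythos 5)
Your overall architecture is the standard one behind this lemma (which the paper does not prove itself but quotes from Guth and Ou--Wang): on each $K^2$-ball one chooses the subspaces in the definition of the $k$-broad norm so as to capture the tangent directions of $Z$, every cap carrying a non-negligible tangent wave packet is then excluded from the max, and what survives is ${\rm RapDec}(R)$ by Lemma~\ref{lew} together with the hypothesis that $f$ is concentrated on $\mathbb{T}_Z$; summing over the polynomially many balls and caps is harmless. So the skeleton is right.

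However, the step you single out as the ``main obstacle'' is both misstated and resolved incorrectly, and this is where the actual content lies. First, you cannot in general take $z_0\in Z\cap B$: a tube in $\mathbb{T}_Z$ is only required to lie in $N_{R^{1/2+\delta_m}}(Z)$, and $R^{1/2+\delta_m}\gg K^2$, so a $K^2$-ball met by tangent tubes may be at distance up to $R^{1/2+\delta_m}$ from $Z$; the correct choice is $z_0\in Z$ with ${\rm dist}(z_0,B)\lesssim R^{1/2+\delta_m}$. Second, no estimate on the variation of $T_zZ$ is needed, and the general claim you propose to cite --- that bounded degree forces slowly varying tangent planes --- is false (the degree-two curve $y=Nx^2$ has curvature $\sim N$), nor does such a lemma appear in \cite{Guth,OW}. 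The point is that Definition~\ref{tangto} \emph{assumes} the angle bound ${\rm Ang}(T_zZ,{\bf L}(\nu))\le R^{-1/2+\delta_m}$ at \emph{every} $z\in Z\cap N_{\gamma R}(T_{\nu,w}^\ell)$, where $\gamma R=R^{1/2+\delta_m}$ (not $R^{3/2+\delta_m}$ as in your scale bookkeeping). Since every tube of $\mathbb{T}_Z$ meeting $B$ passes within $O(K^2)$ of $B$, the single point $z_0$ above lies in this tangency neighborhood of all such tubes simultaneously, so all their directions are $R^{-1/2+\delta_m}$-close to the one tangent plane $T_{z_0}Z$ with no variation argument at all. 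Finally, the $V_\ell$ in the broad norm are $(k-1)$-dimensional, so one should take each $V_\ell$ to be a fixed $(k-1)$-plane containing the $m$-plane $T_{z_0}Z$ --- this is exactly where the hypothesis $m\le k-1$ enters (your ``$V_1=\cdots=V_{m+1}=V_0$ since $m+1\le k$'' is not the right bookkeeping, though harmless); then, because ${\rm Ang}(G(\tau),V_\ell)$ is a minimum over $G(\tau)\supset G(\nu)$, any cap containing a contributing $\nu$ has ${\rm Ang}(G(\tau),V_\ell)\lesssim R^{-1/2}+R^{-1/2+\delta_m}\ll K^{-2}$ and is excluded, which closes the argument.
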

	If $m=k-1$, then by Lemma \ref{le3}, we have
	\beqq
	\|Ef\|_{\br_{k,A}^p(C_R^{n+1})}\leq {\rm RapDec}(R)\|f\|_{L^2}.
	\eeqq
	
	Next, we assume that Proposition \ref{pro12} holds if we decrease the dimension $m$, the radius $R$, or the value of $A$. We proceed the inductive steps.
	
	By invoking Theorem \ref{poly} with \[F=\frac{1}{|B_{K^2}^{n+1}|}\sum_{B_{K^2}^{n+1}\subset C_R^{n+1}}\mu_{Ef}(B_{K^2}^{n+1})\chi_{B_{K^2}^{n+1}},\] we see that there are two cases, that is, either the mass of $\mu_{Ef}$ can be concentrated in a small neighborhood of a lower-dimensional variety or we can reduce the estimate of $\mu_{Ef}$ to smaller cells. We say we are in the \emph{algebraic case}, if there is a transverse complete intersection $Y \subset Z$ of dimension $m-1$, defined by using polynomials of degree $\leq D(\varepsilon,D_Z)$ with
	\beq
	\mu_{Ef}\Big(N_{R^{1/2+\delta_m}}(Y)\cap C_R^{n+1}\Big)\gtrsim \mu_{Ef}(C_R^{n+1}).
	\eeq
	Otherwise, we say we are in the \emph{cellular case} with
	\beq
	\mu_{Ef}(C_R^{n+1})\lesssim \sum_{i=1}^{ \sim D^m} \mu_{Ef}(O_i).
	\eeq
	\subsection{The cellular case}
	Assume that we are in cellular case, that is $\sum_i\mu_{Ef}(O_i)\sim \mu_{Ef}(C_R^{n+1})$. For a given $i$, define $f_i=\sum_{(\nu,w,\ell)\in \mathbb{T}_i} f_{\nu,w}^\ell$, where
	$$\mathbb{T}_i:=\{(\nu,w,\ell):T_{\nu,w}^\ell\cap O_i\neq \varnothing\}.$$
	By a pigeonholing argument, we may choose a cell $O_i$ such that
	
	\begin{equation}
		\begin{aligned}
			\mu_{Ef}(C_R^{n+1})\lesssim D^m \mu_{Ef}(O_i),\\
			\|f_i\|_{L^2}^2\lesssim \f{1}{D^{m-1}}\|f\|_{L^2}^2.
		\end{aligned}
	\end{equation}
	By covering $O_i$ by a family of finitely-overlapping balls of radius $R/2$,  we can prove \eqref{inmain} by inducting on $R$ as follows
	\beq
	\begin{aligned}
		\mu_{Ef}(C_R^{n+1})&\lesssim D^m\mu_{Ef}(O_i)\lesssim D^m\sum_{B_{R/2}^{n+1}\subset C_R^{n+1}}\mu_{Ef}(O_i\cap B_{R/2}^{n+1})\\
		&\lesssim R^\varepsilon D^m \|f_i\|_{L^2}^p\lesssim R^\varepsilon D^{m-\f{(m-1)p}{2}}\|f\|_{L^2}^p.
	\end{aligned}
	\eeq
	The induction closes for $p>\f{2m}{m-1}$ if we choose $D(\varepsilon, D_Z)$ sufficiently large to control the implicit constant.

	\subsection{The algebraic case }
	By definition, there exists a transverse complete intersection $Y$ of dimension $m-1$ such that
	\beqq
	\mu_{Ef}(N_{R^{1/2+\delta_m}}(Y))\gtrsim \mu_{Ef}(C_R^{n+1}).
	\eeqq
	In this case, we subdivide $C_R^{n+1}$ into  balls $\{B_j\}_j$ of radius $\rho$, with $R^{1/2}\ll \rho\ll R$ and  $\rho^{1/2+\delta_{m-1}}=R^{1/2+\delta_m}$.
	Define $f_j=\sum_{(\nu,w,\ell)\in \mathbb{T}_j}f_{\nu,w}^\ell$, where
	$$\mathbb{T}_j:=\{(\nu,w,\ell):T_{\nu,w}^\ell \cap N_{R^{1/2+\delta_m}}(Y)\cap B_j\neq \varnothing\}.$$
	We further decompose $\mathbb{T}_j$ into tubes that are tangential to $Y$ and tubes that are transverse to $Y$. We say $T_{\nu,w}^\ell$ is tangent to $Y$ in $B_j$ if 
	\beqq
	T_{\nu,w}^\ell\cap 2B_j\subset N_{R^{1/2+\delta_m}}(Y)\cap 2B_j=N_{\rho^{1/2+\delta_{m-1}}}\cap 2B_j.
	\eeqq
	and for any $x\in T_{\nu,w}^\ell$ and $Y\cap 2B_j$ with $|x-y|\lesssim R^{1/2+\delta_m}=\rho^{1/2+\delta_{m-1}}$, $${\rm Ang}(G(\nu), T_y Y)\lesssim \rho^{-1/2+\delta_{m-1}}.$$
	Define the collection of tangential wave packets $\mathbb{T}_{j,\ta}$ as  
	$$\mathbb{T}_{j,\ta}=\{(\nu,w,\ell)\in \mathbb{T}_j: T_{\nu,w}^\ell \; \text{is tangent to}\; Y \;\text{in}\; B_j\},$$
	and the transverse wave packets $\mathbb{T}_{j,\tra}$ by
	$$\mathbb{T}_{j,\tra}:=\mathbb{T}_j\backslash \mathbb{T}_{j,\ta}.$$
	Correspondingly, we define
	$$f_{j,{\rm tang}}:=\sum_{(\nu,w,\ell)\in \mathbb{T}_{j,{\rm tang}}} f_{\nu,w}^\ell,\;f_{j,{\rm trans}}:=\sum_{(\nu,w,\ell)\in \mathbb{T}_{j,{\rm trans}}}f_{\nu,w}^\ell.$$
	Note that $Ef$ is essentially equal to  $Ef_j$ on the ball $B_j$ in the sense that $Ef=Ef_j+{\rm RapDec}(R)\|f\|_{L^2}$. By the triangle inequality, we have
	\beq
	\sum_{j}\|Ef\|_{{\rm BL}_{k,A}^p(B_j)}^p\lesssim \sum_j\|Ef_{j,{\rm tang}}\|_{{\rm BL}_{k,A/2}^p(B_j)}^p+\sum_j\|Ef_{j,{\rm trans}}\|_{{\rm BL}_{k,A/2}^p(B_j)}^p+{\rm RapDec}(R)\|f\|^p_{L^2}.
	\eeq
	Therefore, it remains to prove Proposition \ref{pro12} for both the tangential case and the transversal case.
	\subsection{The tangential case}
	Assuming the tangential part dominates, we will prove Proposition \ref{pro12} by induction on the dimension $m$ and $A$. Since we are now working with a ball of radius $\rho\ll R$, in order to match our assumption, we need to redo the wave packet decomposition at scale $\rho$. For the sake of simplicity, define $g=f_{j,{\rm tang}}$ and
	\beqq
	\tilde{g}=\sum_{\tilde \nu,\tilde w,\tl}\tilde{g}_{\tilde \nu,\tilde w}^{\tilde{\ell}}+{\rm RapDec}(R)\|f\|_{L^2}.
	\eeqq
	In order to perform the induction on dimension argument, we have to verify that $(\tilde \nu,\tilde w,\tl)$ corresponds to tubes which are tangent to $Y$ in $B_j$. To be more precise, we need to show that
	$$T_{\tilde \nu,\tilde w}^{\tilde{\ell}} \subset N_{\rho^{1/2+\delta_{m-1}}}(Y)\cap B_j,$$
	and for any $x\in T_{\tilde \nu,\tilde w}^{\tilde{\ell}}$, $y\in Y\cap B_j$ with $|x-y|\lesssim \rho^{1/2+\delta_{m-1}}$,
	$${\rm Ang}(G(\tilde \nu), T_{y}Y)\lesssim \rho^{-1/2+\delta_{m-1}},$$
	which can be deduced from Lemma \ref{tanc}.
	By induction on $m$ and $A$, we have
	\begin{equation}
		\begin{aligned}
			\|E\tilde g\|_{{\rm BL}_{k,A/2}^p(B_\rho)}\leq C(K,\varepsilon,m,D(\varepsilon,D_Z))\rho^{(m-1)\varepsilon}\rho^{\delta(\log{\bar A}-\log{A/2})}\rho^{-e+1/2}\|f_{j,{\rm tang}}\|_{L^2}.
		\end{aligned}
	\end{equation}
	for
	$$2\leq p\leq \bar{p}(k,m-1).$$
	
	Since there are $R^{O(\delta_{m-1})}$ many $B_j$'s, by summing over the balls and noting that
	$$\rho^{1/2+\delta_{m-1}}=R^{1/2+\delta_m},$$
	finally we have 
	\begin{equation}
		\begin{aligned}
			\|E\tilde g\|_{{\rm BL}_{k,A/2}^p(C_R^{n+1})}\leq C(K,\varepsilon,m,D(\varepsilon,D_Z))R^{O(\delta_{m-1})}R^{(m-1)\varepsilon}R^{\delta(\log{\bar A}-\log{A})}\rho^{-e+1/2}\|f_{j,{\rm tang}}\|_{L^2}.
		\end{aligned}
	\end{equation}
	Using the fact that  $\delta_{m-1}\ll \varepsilon$,  we  close the induction.
	
	\subsubsection{The transversal case}\label{equid section}
	Unlike the circular cone case studied in \cite{OW}, $\{(\xi,\phi(\xi)): \xi\in L\}$ may not lie in an affine subspace. To overcome this difficulty, we  work with small sector $\tau$ of dimension $\rho^{-1/2+\delta_m}\times \cdots \times \rho^{-1/2+\delta_m}\times 1$ with $R^{1/2}\ll \rho\ll R$. At this scale, an affine subspace $\bar V$ can be constructed using the tangent space of a point on the surface $\{(\xi,\phi(\xi)): \xi\in L\}$, so that 
	$\{(\xi, \phi(\xi)):\xi \in L\cap \tau\}$ lies in a $R^{-1/2+\delta_m}$-neighborhood of $\bar V$.
	
	Given $\eta\in L$, we use $T_\eta L$ to denote the tangent space of $L$ at $\eta$. By the definition of $L$, it is easy to check that $T_\eta L$ is orthogonal to the vectors 
	\beqq
	\Big(\f{\partial ^2\phi}{\partial \xi_i\partial \xi_j}\Big)\Big|_{\xi=\eta}\alpha_i,\;i=1,\cdots, n+1-m.
	\eeqq
	Let $$\bar V:=T_{\eta}L+e_{n+1}.$$ 
	This definition of $\bar V$ is consistent with that in subsection \ref{gem}.
	\begin{lemma}\label{apt}
		Let $\tau$ be a cap of dimension $1\times \rho^{-1/2+\delta_m}\times \cdots,\times \rho^{-1/2+\delta_m}$ with $\eta\in \tau$, then 
		\beq \label{eq-quar}
		\{(\xi,\phi(\xi)): \xi\in L\cap \tau\}\subset N_{CR^{-1/2+\delta_m}} \bar V.
		\eeq
		where $C>0$ is a large constant.
	\end{lemma}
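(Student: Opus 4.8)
The plan is to realize $\{(\xi,\phi(\xi)):\xi\in L\cap\tau\}$ as a small perturbation of its own tangent plane at $\eta$, and to show the perturbation is $O(R^{-1/2+\delta_m})$ because the angular diameter of $\tau$ is $\rho^{-1/2+\delta_m}$ and $\rho\le R$. First I would parametrize. Let $\eta\in L\cap S^{n-1}$ (by homogeneity ${\bf H_1}$ we may normalize onto the sphere, and afterwards extend radially) and write an arbitrary $\xi\in L\cap\tau$ as $\xi=\eta+v$ where $v$ is tangent-plus-normal to $S^{n-1}$; since $\xi\in\tau$ and $\eta\in\tau$, and $\tau$ has dimensions $1\times\rho^{-1/2+\delta_m}\times\cdots\times\rho^{-1/2+\delta_m}$ with $\rho\geq R^{1/2}$, we have $|v|\lesssim\rho^{-1/2+\delta_m}$ and moreover $\mathrm{Ang}(\xi,\eta)\lesssim\rho^{-1/2+\delta_m}$. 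The point on the surface is $(\xi,\phi(\xi))$, and the claim is that this point lies within $CR^{-1/2+\delta_m}$ of $\bar V=T_\eta L+e_{n+1}$.

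Next I would Taylor-expand. Since $\bar V$ contains the vertical direction $e_{n+1}$, the distance from $(\xi,\phi(\xi))$ to $\bar V$ is controlled by the distance from $\xi$ to $T_\eta L$ inside $\R^n$ (the last coordinate $\phi(\xi)$ is free). So it suffices to show $\mathrm{dist}(\xi, T_\eta L)\lesssim R^{-1/2+\delta_m}$ for all $\xi\in L\cap\tau$. Writing $\xi=\eta+v$ with $v=v_{\mathrm{tan}}+v_{\mathrm{nor}}$, $v_{\mathrm{tan}}\in T_\eta L$, the normal component $v_{\mathrm{nor}}$ is what must be estimated. Using the defining equations of $L$, namely $F_i(\xi):=\sum_j a_{i,j}\partial_{\xi_j}\phi(\xi)-a_{i,n+1}=0$ for $i=1,\dots,n+1-m$, and $F_i(\eta)=0$, a second-order Taylor expansion gives
\[
0=F_i(\xi)-F_i(\eta)=\langle\gamma_i,v\rangle+O\big(\|\mathrm{Hess}\,F_i\|\,|v|^2\big),
\qquad \gamma_i:=\mathrm{Hess}(\phi)\big|_{\xi=\eta}\alpha_i,
\]
exactly as in the computations around \eqref{taylor} and in subsection \ref{gem}. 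Since $T_\eta L=\{w:\langle\gamma_i,w\rangle=0\ \forall i\}$ and, by the transversality hypothesis ${\rm Ang}(\eta,V^-)<\tfrac\pi2-K^{-2}$ (inherited here because $\eta\in L$ sits in the transversal branch — this is the regime of Lemma \ref{eqaaa}(a)), the vectors $\gamma_1,\dots,\gamma_{n+1-m}$ are linearly independent with a quantitative lower bound $\gtrsim K^{-O(1)}$ on their Gram determinant (this is precisely \eqref{anglere}). Hence the linear system $\langle\gamma_i,v_{\mathrm{nor}}\rangle=-O(|v|^2)$ has $|v_{\mathrm{nor}}|\lesssim K^{O(1)}|v|^2\lesssim K^{O(1)}\rho^{-1+2\delta_m}$. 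Using $K\lesssim R^{\varepsilon^2}$, $\delta_m\ll\varepsilon$ and $\rho\geq R^{1/2}$ one gets $K^{O(1)}\rho^{-1+2\delta_m}\lesssim R^{-1/2+\delta_m}$ after adjusting constants, which yields \eqref{eq-quar}.

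The main obstacle is the bookkeeping of exponents: one must check that the quadratic gain $|v|^2\lesssim\rho^{-1+2\delta_m}$ genuinely beats the target $R^{-1/2+\delta_m}$ once the harmless powers of $K=K_0R^{\tilde\delta}$ and of $\rho\in[R^{1/2},R]$ are accounted for — this is where the hierarchy $\delta\ll\delta_n\ll\cdots\ll\delta_0\ll\varepsilon$ and $\tilde\delta$ small are used. A secondary technical point is justifying that the constants in the $O(|v|^2)$ error (second derivatives of $F_i$, i.e.\ third derivatives of $\phi$) are uniformly bounded over the class $\mathbf\Phi$; this follows from the eigenvalue bounds on $\mathrm{Hess}(\phi)$ together with \eqref{assu}, and, as in the Remark after the reduction to $\mathbf\Phi(R)$, from the explicit homogeneous form of the phase. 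Everything else is the same linear-algebra dichotomy already carried out in Section \ref{section-4}, now applied pointwise at $\eta$ rather than globally.
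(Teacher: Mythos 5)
Your argument is correct and is essentially the paper's own proof: both reduce \eqref{eq-quar}, via homogeneity and the fact that $e_{n+1}\in\bar V$, to showing ${\rm dist}(\xi,T_\eta L)\lesssim R^{-1/2+\delta_m}$ for $\xi\in L\cap\tau\cap S^{n-1}$, and both get this from a second-order Taylor expansion in which the linear term is killed by the orthogonality of $\gamma_i=\mathrm{Hess}(\phi)\big|_{\xi=\eta}\alpha_i$ to $T_\eta L$, leaving a quadratic error $O(\rho^{-1+2\delta_m})\leq R^{-1/2+\delta_m}$ since $\rho\gtrsim R^{1/2}$ — the paper organizes the expansion along a $C^3$-bounded curve in $L\cap S^{n-1}$ joining $\eta$ to $\xi$, while you expand the defining functions $F_i$ at $\eta$ and invert the linear system for the normal component of $\xi-\eta$. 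The only caveat is your $K^{O(1)}$ conditioning factor (and note $K\lesssim R^{\varepsilon}$, not $R^{\varepsilon^2}$): at the borderline $\rho\sim R^{1/2}$ it is not absorbed by an $R$-independent constant $C$, but it is harmless in the regime $R^{1/2}\ll\rho\ll R$ in which the lemma is applied (there $\rho\sim R^{1-O(\delta_{m-1})}$), and the paper's proof implicitly carries the same dependence, both in asserting the bounded-derivative curve inside $L$ and in passing from the pairings with the $\gamma_i$ to the projection onto $\big(T_\eta(L\cap S^{n-1})\big)^{\perp}$.
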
 
	\begin{proof}
		To prove \eqref{eq-quar}, it suffices to show 
		\beq
		\{\xi: \xi\in L\cap \tau\}\subset N_{CR^{-1/2+\delta_m}} T_{\eta}L.
		\eeq
		By the homogeneity of $\phi$, this can be further reduced to showing that if $\xi,\eta \in L\cap S^{n-1}$ with $|\xi-\eta|\lesssim \rho^{-1/2+\delta_m}$, then 
		\beq
		\{\xi: \xi\in L\cap S^{n-1}\cap \tau\}\subset N_{CR^{-1/2+\delta_m}} T_{\eta}L.
		\eeq
		Since $|\xi-\eta|\lesssim \rho^{-1/2+\delta_m}$ we may construct a curve $\{\xi(t)\}\subset L\cap S^{n-1}$  connecting $\xi$ and $\eta$ with $\xi(0)=\eta, \xi(\rho^{-1/2+\delta_m})=\xi$ and 
		\beqq
		|\xi^{\ell}(t)|\leq C, \; 0\leq t\leq \rho^{-1/2+\delta_m},\; \ell \leq 3.
		\eeqq
		It remains to show 
		\beqq
		\Big|{\rm Proj}_{\big(T_\eta (L\cap S^{n-1}) \big)^{\perp}}(\xi(t)-\xi(0))\Big|\lesssim R^{-1/2+\delta_m}.
		\eeqq
		Note that 
		\beqq
		\Big(\frac{\partial^2\phi}{\partial \xi_i\xi_j}\Big)\Big|_{\xi=\eta}\alpha_i,\; i=1,\cdots, n+1-m, 
		\eeqq
		are orthogonal to $T_\eta (L\cap S^{n-1})$, and by our construction $\xi'(0)\in T_{\eta}(L\cap S^{n-1})$. Therefore we have 
		\beqq
		\Big|(\xi(\rho)-\xi(0))\cdot \Big(\frac{\partial^2\phi}{\partial \xi_i\xi_j}\Big)\Big|_{\xi=\eta}\alpha_i\Big|= \rho\xi'(0)\cdot  \Big(\frac{\partial^2\phi}{\partial \xi_i\xi_j}\Big)\Big|_{\xi=\eta}\alpha_i + O(\rho^{-1+2\delta_m})\lesssim R^{-1/2+\delta_m}.
		\eeqq
		Here we have used the assumption $R^{1/2}\ll \rho \ll R$.
		The proof is complete.

	\end{proof} 
	
	
	Fix a ball $B$ of radius $R^{1/2+\delta_m}$. Let $V$ be the tangent space to $Z$ at some point in $B\cap Z$. By Definition \ref{tangto}, it does not matter which point we choose.  Define two sets $\mathbb{T}_{B,Z}$ and $\mathbb{T}_{B,Z,\tau}$ respectively as follows:\beqq
	\mathbb{T}_{B,Z}:=\{(\nu,w,\ell): T_{\nu,w}^\ell \;\text{is }\;R^{-1/2+\delta_m}\;\text{tangent to }\;  Z, T_{\nu,w}^\ell\cap B\neq \varnothing\},
	\eeqq
	\beqq
	\mathbb{T}_{B,Z,\tau}:=\{(\nu,w,\ell): T_{\nu,w}^\ell \;\text{is }\;R^{-1/2+\delta_m}\;\text{tangent to }\;  Z, T_{\nu,w}^\ell\cap B\neq \varnothing, \nu\cap 2\tau \neq \varnothing\}.
	\eeqq
	Let $h_B$ related to $\mathbb{T}_{B,Z}$ be defined by 
	$$h_B:=\sum_{(\nu,w,\ell)\in \mathbb{T}_{B,Z}}h_{\nu,w}^\ell.$$
	Similarly, define $h_{B,\tau}$ related to $\mathbb{T}_{B,Z,\tau}$ as  $$h_{B,\tau}:=\sum_{(\nu,w,\ell)\in \mathbb{T}_{B,Z,\tau}}h_{\nu,w}^\ell.$$
	
	Fix $B_j=B_\rho^{n+1}(y)$ and cover $B_j$ by balls $B$ of radius $R^{1/2+\delta_m}$. Since $V$ is determined by $B$, on account of Lemma \ref{eqaaa}, we may sort the balls $B$ into two classes $X_a$ and $X_b$ according to whether case $a)$ or case $b)$ in Lemma \ref{eqaaa} holds.
	Now partition $N_{R^{1/2+\delta_m}}(Z)\cap B_j\subset X_a\cup X_b$, where $X_a, X_b$ are the union of balls $B$ in case $a)$ or in case $b)$ respectively.
	First assume that $B\in X_b$. Since the support of $h_B$ is contained in $O(1)$ slabs, we have
	\beq \label{tangt} \|Eh_{B}\|_{{\rm BL}_{k,A}^p(B)}^p={\rm Rapdec}(R)\|h_B\|_{L^2}.
	\eeq
	Otherwise,  we have 
	\begin{lemma}
		Let $h_{B,\tau}=\sum_{(\nu,w,\ell)\in \mathbb{T}_{B,Z,\tau}} h_{\nu,w}^\ell$ and $B\in  X_a$. Then for any $\rho\leq R$,
		\beq\label{equi}
		\int_{B\cap N_{\rho^{1/2+\delta_m}}(Z)}|Eh_{B,\tau}|^2\lesssim R^{O(\delta_m)}\Big(\f{R^{1/2}}{\rho^{1/2}}\Big)^{-(n+1-m)}\int_{2B}|Eh_{B,\tau}|^2+{\rm Rapdec}(R)\|h_B\|_{L^2}^2.
		\eeq
	\end{lemma}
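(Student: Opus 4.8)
The plan is to combine the curved analogue of the ``flat slab'' property of the tangent cone proved in Lemma \ref{apt} with a transverse equidistribution argument in the spirit of \cite{Guth, OW}.

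First I would pin down the space--time Fourier support of $Eh_{B,\tau}$. For any $(\nu,w,\ell)\in \mathbb{T}_{B,Z,\tau}$ the tube $T_{\nu,w}^\ell$ is $R^{-1/2+\delta_m}$-tangent to $Z$ in the sense of Definition \ref{tangto}, so its direction ${\bf L}(\nu)$ lies within $O(R^{-1/2+\delta_m})$ of the tangent plane $V=T_zZ$ used to define $L$; unwinding the definition of $L$ this forces $\xi_\nu\in N_{CR^{-1/2+\delta_m}}(L)\cap 2\tau$. Since $\tau$ has angular width $\rho^{-1/2+\delta_m}$ and $R^{1/2}\ll\rho\ll R$, Lemma \ref{apt} then shows that the Fourier support of $Eh_{B,\tau}$ is contained in $N_{CR^{-1/2+\delta_m}}(\bar V)$, where $\bar V=T_\eta L+e_{n+1}$ is the linearization appearing in Lemma \ref{eqaaa}. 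In particular this support is $O(R^{-1/2+\delta_m})$-thin in the directions of $W=\bar V^{\perp}$, so convolving with a Schwartz bump adapted to the dual slab shows that, up to a ${\rm RapDec}(R)\|h_B\|_{L^2}^2$ error, $|Eh_{B,\tau}|^2$ is essentially constant on every translate of $W\cap B(0,R^{1/2-\delta_m})$.

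Next I would exploit the hypothesis $B\subset X_a$. By Lemma \ref{eqaaa}(a) this gives ${\rm Ang}(V,W)\gtrsim K^{-4}$, and since the tangent spaces of $Z$ over $B$ all make angle $O(R^{-1/2+\delta_m})$ with $V$, for every $v\in\bar V$ the affine $(n+1-m)$-plane $v+W$ meets $Z\cap 2B$ transversally with angle $\gtrsim K^{-4}$. By a B\'ezout-type degree bound, $Z\cap(v+W)\cap 2B$ consists of $O_{D_Z}(1)$ points, so the fiber $(v+W)\cap N_{\rho^{1/2+\delta_m}}(Z)\cap 2B$ lies in $O_{D_Z}(1)$ balls of radius $\lesssim K^{O(1)}\rho^{1/2+\delta_m}$ inside $v+W$. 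On each such ball, using that $K^{O(1)}\rho^{1/2+\delta_m}\le R^{1/2-\delta_m}$ (valid since $\rho\le R$ once $\tilde\delta,\delta_m$ are small) together with the local constancy of $|Eh_{B,\tau}|^2$ at scale $R^{1/2-\delta_m}$ in the $W$-directions, one bounds $\int_{\text{ball}}|Eh_{B,\tau}|^2$ by $\big(K^{O(1)}\rho^{1/2+\delta_m}/R^{1/2-\delta_m}\big)^{n+1-m}$ times $\int_{(v+W)\cap 2B}|Eh_{B,\tau}|^2$, plus a ${\rm RapDec}(R)\|h_B\|_{L^2}^2$ error. Since $\big(K^{O(1)}\rho^{1/2+\delta_m}/R^{1/2-\delta_m}\big)^{n+1-m}=R^{O(\delta_m)}\big(R^{1/2}/\rho^{1/2}\big)^{-(n+1-m)}$, summing over the $O_{D_Z}(1)$ balls and integrating over $v\in\bar V$ (Fubini for the splitting $\R^{n+1}=\bar V\oplus W$, together with $\int_{\bar V}\int_{(v+W)\cap 2B}(\cdot)\lesssim\int_{2B}(\cdot)$) yields \eqref{equi}.

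The main obstacle is the first step: unlike the circular cone treated in \cite{OW}, the set $L$ is genuinely curved, so $\{(\xi,\phi(\xi)):\xi\in L\}$ need not lie in any affine subspace, and one is forced to replace it by its tangent space $\bar V$ at the cost of an $O(R^{-1/2+\delta_m})$ error --- which is exactly why the sector $\tau$ must be taken as fine as $\rho^{-1/2+\delta_m}$ and why Lemma \ref{apt}, combined with $\rho\ll R$, is essential here. A secondary bookkeeping issue is checking that the transversality constant $K^{-4}$, and hence the factor $K^{O(1)}$ above, is absorbed by $R^{O(\delta_m)}$; this holds provided $\tilde\delta$ is chosen small relative to $\delta_m$.
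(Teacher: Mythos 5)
Your argument is correct and follows essentially the same route as the paper's proof: Fourier support of $Eh_{B,\tau}$ in an $R^{-1/2+\delta_m}$-neighborhood of $\bar V$ via tangency plus Lemma \ref{apt}, a locally-constant/local $L^2$ comparison on fibers parallel to $W$ (the paper cites Lemma 6.4 of \cite{Guth}, you convolve with a bump adapted to the dual slab), the transversality ${\rm Ang}(V,W)\gtrsim K^{-4}$ from Lemma \ref{eqaaa} to confine each fiber's intersection with $N_{\rho^{1/2+\delta_m}}(Z)\cap B$ to small balls of radius $K^{O(1)}\rho^{1/2+\delta_m}$, and Fubini over the fibers. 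The only cosmetic differences are your B\'ezout-type count of $O_{D_Z}(1)$ balls where the paper asserts a single ball $B(x_0,\rho^{1/2+2\delta_m})$, and your explicit bookkeeping of the $K^{O(1)}$ losses absorbed into $R^{O(\delta_m)}$.
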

	\begin{proof}
		Since $V$ is the tangent space of $Z$ at some point in $B\cap Z$, 
		\beqq
		T_{B,Z,\tau}\subset T_{B,V}:=\{(\nu,w,\ell): T_{\nu,w}^\ell\cap B\neq \varnothing \;\text{and}\; {\rm Ang}({\bf L}(\nu),V)\lesssim R^{-1/2+\delta_m}\}.
		\eeqq 
		By Lemma \ref{apt}, $(Eh_{B,\tau})^{\wedge}$ is supported in $N_{R^{-1/2+\delta_m}} \bar V$. Consider an $(n+1-m)$-dimensional plane $\Pi$ parallel to $W$ passing through $B$. If we restrict $Eh_{B,\tau}$ to the plane $\Pi$, then its Fourier transform is supported in a ball of radius $\lesssim R^{-1/2+\delta_m}$. Therefore, by Lemma 6.4 in \cite{Guth} we have 
		\beqq
		\int_{B(\bar x,\rho^{1/2+2\delta_m})\cap \Pi} |Eh_{B,\tau}|^2\lesssim \Big(\f{R^{1/2-2\delta_m}}{\rho^{1/2+2\delta_m}}\Big)^{-(n+1-m)}\int_{\Pi} w_{B(\bar x,R^{1/2-2\delta_m})}|Eh_{B,\tau}|^2,
		\eeqq
		for any point $\bar x \in \R^{n+1} $.
		
		By Lemma \ref{eqaaa}, ${\rm Ang}(V,W)\gtrsim K^{-4}$, we have 
		\beqq
		\Pi\cap N_{\rho^{1/2+\delta_m}}(Z)\cap B\subset \Pi\cap B(x_0, \rho^{1/2+2\delta_m}),
		\eeqq  
		for some point in $B$.
		
		Therefore, modulo a rapidly decaying error, we obtain
		\beqq
		\int_{\Pi\cap N_{\rho^{1/2+\delta_m}}(Z)\cap B}|Eh_B|^2\leq \int_{\Pi\cap B(x_0,\rho^{1/2+2\delta_m})}|Eh_B|^2\lesssim R^{O(\delta_m)}\Big(\f{R^{1/2}}{\rho^{1/2}}\Big)^{-(n+1-m)}\int_{\Pi}w_B |Eh_B|^2.
		\eeqq 
		Integrating over all $\Pi$ that is parallel to $W$ and passing through $B$, one obtains the desired results.  
		
	\end{proof}

	Define $g_{{\rm ess}}$ and $g_{{\rm tail}}$ to be the essential part and tail part of $f_{j,\trans}$ respectively by
	\beqq
	g_{\ess}:=\sum_{(\tn_0,w_0)\in \mathbb{T}_{\ess}} g_{\tn_0,w_0},
	\eeqq
	\[ g_{\rm tail}:=\sum_{(\tn_0,w_0)\in \mathbb{T}_{\rm tail}} g_{\tn_0,w_0},\]
	where
	\begin{align*}
		\mathbb{T}_{\ess}&:=\{(\tn_0,w_0): \exists (\nu,w,\ell)\in \mathbb{T}_{\tn_0,w_0}\;\textrm{with}\; T_{\nu,w}^\ell\cap X_a\neq \varnothing\},\\
		\mathbb{T}_{{\rm tail}}&:=\{(\tn_0,w_0):\forall (\nu,w,\ell)\in \mathbb{T}_{\tn_0,w_0},T_{\nu,w}^\ell\cap X_a=\varnothing\}.
	\end{align*}
	
	By the triangle inequality and \eqref{tangt}, we have
	\begin{align*}
		\|Eg\|_{{\rm BL}_{k,A}^p(B_j)}&\leq \|Eg_{\ess}\|_{{\rm BL}_{k,A/2}^p(B_j)}+\|Eg_{{\rm tail}}\|_{{\rm BL}_{k,A/2}^p(B_j)}\\
		&\leq \|Eg_{\ess}\|_{{\rm BL}_{k,A/2}^p(B_j)}+{\rm RapDec}(R)\|f\|_{L^2}.
	\end{align*}
	
	Next, through an appropriate reduction, it suffices to consider a  direction $b$ such that $|b|\le R^{1/2+\delta_m}$ and $b$ is transversal to $T_zZ$ for all points in $z\in Z\cap B_j$. Indeed, we will show that $L^2$-norm of $g_{\ess}$ is equidistributed along different choices of $b$ in $N_{R^{1/2+\delta_m}}(Z)\cap B_j$. To this end, we need a useful reversed H\"ormander's $L^2$ bound which can be found in \cite{Guth}.
	\begin{lemma}[Lemma 3.4 in \cite{Guth}]\label{le5}
		Suppose that $h$ is a function concentrated on a set of wave-packets $\mathbb{T}$ and for every $T_{\nu,w}^\ell\in\mathbb{T}$, $T_{\nu,w}^\ell \cap B_r(z)\neq \varnothing$ for some radius $r\geq R^{1/2+\delta_m}$. Then
		\beqq
		\|Eh\|_{L^2(B_{10r}^{n+1}(z))}^2\sim r\|h\|_{L^2}^2.
		\eeqq
	\end{lemma}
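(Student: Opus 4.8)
The plan is to combine Plancherel in the $x$–variable with Fubini in $t$, and then use the wave–packet localization (Lemma \ref{lew}) to pin down where the spatial $L^2$–mass of $Eh(\cdot,t)$ lives. The starting observation is that, for each fixed $t$, the map $x\mapsto Eh(x,t)$ is a fixed multiple of the inverse Fourier transform of $e^{it\phi(\cdot)}h(\cdot)$, so by Plancherel
\[
\int_{\R^n}|Eh(x,t)|^2\,\ud x=c_n\|e^{it\phi}h\|_{L^2}^2=c_n\|h\|_{L^2}^2
\]
for \emph{every} $t$, where $c_n$ is a normalization constant. Integrating this identity over the slab $\{|t-z_{n+1}|\le 10r\}$, where $z=(z',z_{n+1})$, immediately gives the upper bound
\[
\|Eh\|_{L^2(B_{10r}^{n+1}(z))}^2\le\int_{|t-z_{n+1}|\le 10r}\int_{\R^n}|Eh(x,t)|^2\,\ud x\,\ud t\lesssim r\|h\|_{L^2}^2,
\]
and this half uses nothing about $\mathbb T$.

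For the lower bound I would integrate the same identity over the thinner slab $\{|t-z_{n+1}|\le r\}$ and show that, for such $t$, almost all of the $L^2_x$–mass of $Eh(\cdot,t)$ sits in a ball $B_{Cr}^n(z')$ with $C$ an absolute constant. Since $h$ is concentrated on wave packets from $\mathbb T$, we may write $h=\sum_{T\in\mathbb T}f_T$ modulo a ${\rm RapDec}(R)\|h\|_{L^2}$ error. Fix $T=T_{\nu,w}^\ell$; by hypothesis $T$ meets $B_r(z)$, say at a point $(x_0,t_0)$, and the description \eqref{wavepa} of the tube gives $x_0=w_\ell-t_0\,\partial_\xi\phi(\xi_\nu)+O(R^{1/2+\delta})$. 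Hence for every $t$ with $|t-z_{n+1}|\le r$ the time-$t$ slice $\{x:(x,t)\in T\}$ is contained in $B_{Cr}^n(z')$: this is exactly where the hypothesis $r\ge R^{1/2+\delta_m}$ is used (it makes the $R^{1/2+\delta}$ tube radius negligible compared to $r$), together with the boundedness of $\nabla\phi$ on ${\rm A}(1)$ to control the spatial drift $|t-t_0|\,|\partial_\xi\phi(\xi_\nu)|$ along the tube. By Lemma \ref{lew} (together with the routine rapid decay of a single wave packet at large $|x|$) we get $|Ef_T(x,t)|={\rm RapDec}(R)\|h\|_{L^2}$ for $x\notin B_{Cr}^n(z')$; summing the $\lesssim R^{O(1)}$ tubes of $\mathbb T$ keeps the bound ${\rm RapDec}(R)$, so $\int_{\R^n\setminus B_{Cr}^n(z')}|Eh(x,t)|^2\,\ud x={\rm RapDec}(R)\|h\|_{L^2}^2$.

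Combining, and using $r\le R$ so that $r\cdot{\rm RapDec}(R)={\rm RapDec}(R)$,
\[
\int_{|t-z_{n+1}|\le r}\int_{B_{Cr}^n(z')}|Eh(x,t)|^2\,\ud x\,\ud t=2c_n r\|h\|_{L^2}^2-{\rm RapDec}(R)\|h\|_{L^2}^2\gtrsim r\|h\|_{L^2}^2 .
\]
Since the region of integration is contained in $B_{10r}^{n+1}(z)$ (enlarging the numerical constant if necessary, which is harmless for the phase functions at hand, whose gradients are small), this gives $\|Eh\|_{L^2(B_{10r}^{n+1}(z))}^2\gtrsim r\|h\|_{L^2}^2$ and completes the argument.

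The genuine content, and the only step requiring care, is the lower bound — specifically the uniform spatial confinement of the wave packets over the full slab $\{|t-z_{n+1}|\le r\}$. This is precisely where $r\ge R^{1/2+\delta_m}$ enters, and where one upgrades the single–tube decay of Lemma \ref{lew} to a statement about the whole sum by absorbing the harmless factor $\#\mathbb T\le R^{O(1)}$. Everything else is Plancherel, Fubini, and ${\rm RapDec}(R)$ bookkeeping.
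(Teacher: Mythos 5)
Your proof is correct, and it is essentially the standard argument: the paper itself does not prove this lemma but imports it wholesale from Guth (Lemma 3.4 of \cite{Guth}), whose proof runs along the same lines you describe. In fact your version is slightly cleaner in the present setting, because for the extension operator $Ef(x,t)=\int e^{i(x\cdot\xi+t\phi(\xi))}f(\xi)\,d\xi$ each fixed-time slice gives an exact Plancherel identity $\int_{\R^n}|Eh(x,t)|^2dx=c_n\|h\|_{L^2}^2$, whereas in Guth's parabolic formulation one works with unit-thickness slabs; the upper bound is then pure Fubini, and the entire content is, as you say, the spatial confinement of the wave packets over the time slab, which uses $r\ge R^{1/2+\delta_m}\gg R^{1/2+\delta}$ and the boundedness of $\nabla\phi(\xi_\nu)$, plus Lemma \ref{lew} and $\#\mathbb{T}\le R^{O(1)}$ to absorb the tails into ${\rm RapDec}(R)\|h\|_{L^2}^2$. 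The one point to handle with a bit more care than "enlarge the numerical constant" is that the radius $10r$ in the conclusion is fixed, so the drift bound $|x_0-z'|+|t-t_0|\,|\nabla\phi(\xi_\nu)|+O(R^{1/2+\delta})$ must land strictly inside $B_{10r}^{n+1}(z)$; for the circular cone and for the reduced class $\mathbf\Phi(R)$ (where $|\nabla\phi_R|$ is small on ${\rm N}_{\varepsilon_0}(e_n)$) this is immediate, and for a general $\phi\in\mathbf\Phi$ with $|\nabla\phi|\lesssim 1$ on ${\rm A}(1)$ one can make the argument uniform by integrating the Plancherel identity over the thinner slab $|t-z_{n+1}|\le c\,r$ with $c>0$ a small absolute constant, which limits the drift to $\ll 10r$ and still yields the lower bound $\gtrsim c\,r\|h\|_{L^2}^2\sim r\|h\|_{L^2}^2$. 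With that cosmetic adjustment your argument is complete and matches the cited one.
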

	As a direct consequence of Lemma \ref{le5}, it follows that for any $B\subset X_a$ such that $B\cap T_{\nu,w}^\ell\neq \varnothing$, where $(\nu,w,\ell)\in \mathbb{T}_{\tn_0,w_0}$ for some $(\tn_0,w_0)\in \mathbb{T}_{\ess},$ we have
	\beqq
	\|g_{\tn_0,w_0}\|_{L^2}^2\sim R^{-1/2-\delta_m}\|Eg_{\tn_0,w_0}\|_{L^2(40B)}^2.
	\eeqq
	Let $b\in B_{R^{1/2+\delta_m}}$. Decompose
	\beqq
	\tilde g=\sum_{\tn,\tw,\tl}\tilde g_{\tn,\tw}^{\tl}+{\rm RapDec}(\rho)\|f\|_{L^2}.
	\eeqq
	A key observation is that, for any $(\tn,\tw,\tl)$, if $\tilde{T}_{\tn,\tw}^{\tl}$ intersects $N_{\rho^{1/2+\delta_m}}(Z+b)\cap B_j$, then according to Lemma \ref{tanc}, $\tilde{T}_{\tn,\tw}^{\tl}$ is $\rho^{-1/2+\delta_m}$-tangent to $Z+b$ in $B_j$. Define
	\beqq
	\tilde{\mathbb{T}}_{Z+b}:=\{(\tn,\tw,\tl): \tilde{T}_{\tn,\tw}^{\tl}\; \textrm{is tangent to}\; Z+b \;\text{in}\; B_j\}, \tilde g_b:=\sum_{(\tn,\tw,\tl)\in \tilde T_{Z+b}}\tilde g_{\tn,\tw}^{\tl}.
	\eeqq
	Define
	\beqq
	\tilde g_{{\rm ess},b}=\sum_{(\tn_0,w_0)\in \mathbb{T}_{\ess}}\sum_{(\tn,\tw,\tl)\in \tilde T_{Z+b}\cap \widetilde{\mathbb{T}}_{\tn_0,w_0}} \tilde g_{\tn,\tw}^{\tl}.
	\eeqq
	Therefore, $\tilde g_{{\rm ess},b}$ is tangent to $Z+b$ in $B_j$.
	
	With the above notations, to finish the proof of the transverse case, we need the following important transverse equidistribution estimate, the proof of which can be obtained by carrying over the proof of Lemma 5.13 in \cite{OW}.
	\begin{lemma}
		Let $g_{\ess}$ and $\tilde g_{{\rm ess},b}$ be defined as above, then
		\beqq
		\|\tilde g_{\ess, b}\|_{L^2}^2\leq R^{O(\delta_m)}\Big(\frac{R^{1/2}}{\rho^{1/2}}\Big)^{-(n+1-m)}\|g_{\ess}\|_{L^2}^2.
		\eeqq
	\end{lemma}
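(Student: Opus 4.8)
The plan is to carry over the proof of Lemma 5.13 in \cite{OW}; the only structural change needed is that the role of the flatness of the circular light cone along its null direction will be played by the flatness statement of Lemma \ref{apt} together with the dichotomy of Lemma \ref{eqaaa}, both of which we have already established for a general phase $\phi\in\mathbf\Phi$.

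First I would reduce to a single ball of radius $R^{1/2+\delta_m}$. Cover $B_j=B_\rho^{n+1}(y)$ by finitely overlapping balls $B$ of radius $R^{1/2+\delta_m}$, split into the families $X_a,X_b$ as in the main text. Applying the reverse H\"ormander $L^2$ inequality (Lemma \ref{le5}) at scale $\rho$ to $\tilde g_{\ess,b}$ and again to $\tilde g_{\ess}$ — whose wave packets all meet $B_\rho(y)$ — and using that $E\tilde g_{\ess,b}=E\tilde g_{\ess}$ on $N_{\rho^{1/2+\delta_m}}(Z+b)\cap B_j$ modulo ${\rm RapDec}(R)$, together with $\|\tilde g_{\ess}\|_{L^2}^2\sim\|g_{\ess}\|_{L^2}^2$, one reduces the claim to
\beqq
\int_{N_{\rho^{1/2+\delta_m}}(Z+b)\cap B_j}|E\tilde g_{\ess}|^2\lesssim R^{O(\delta_m)}\Big(\tfrac{R^{1/2}}{\rho^{1/2}}\Big)^{-(n+1-m)}\int_{B_j}|E\tilde g_{\ess}|^2+{\rm RapDec}(R)\|f\|_{L^2}^2,
\eeqq
where the factors $\rho^{-1}$ and $\rho$ produced by the two applications of Lemma \ref{le5} cancel. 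Since each group of wave packets entering $g_{\ess}$ meets $X_a$, a pigeonholing as in \cite{OW} (using Lemma \ref{tanc} to track the rescaled wave packets) will then reduce matters to proving, for a single $B\subset X_a$, the same estimate with the left-hand integral taken over $2B\cap N_{\rho^{1/2+\delta_m}}(Z+b)$ and the right-hand one over $2B$.

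Next I would prove this equidistribution statement on the fixed ball $B\subset X_a$. Let $V$ be the tangent plane of $Z$ at a point of $B\cap Z$ and $W$ the complementary space constructed in Section \ref{equid section}, so that ${\rm Ang}(V,W)\gtrsim K^{-4}$ by case $(a)$ of Lemma \ref{eqaaa}. Decompose the part of $\tilde g_{\ess}$ living over $B$ into pieces $h_{B,\tau}$ over caps $\tau$ of diameter $\rho^{-1/2+\delta_m}$; these are almost orthogonal on $2B$, and by Lemma \ref{apt} each $(Eh_{B,\tau})^{\wedge}$ is supported in $N_{CR^{-1/2+\delta_m}}(\bar V)$ with $e_{n+1}\in\bar V$. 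Consequently the restriction of $Eh_{B,\tau}$ to any affine plane parallel to $W$ has Fourier support in a ball of radius $\lesssim R^{-1/2+\delta_m}$, while the transversality ${\rm Ang}(V,W)\gtrsim K^{-4}$ forces $N_{\rho^{1/2+\delta_m}}(Z+b)\cap B$ to meet each such plane in a set of diameter $\lesssim\rho^{1/2+2\delta_m}$. Applying the local-constancy estimate (Lemma 6.4 in \cite{Guth}) on these $W$-planes and integrating over the complementary directions yields inequality \eqref{equi} with $Z$ replaced by the translate $Z+b$ — its proof only uses the tangent-plane structure of $Z$, which is translation invariant. Summing over $\tau$ by almost orthogonality and over $B\subset X_a$ by finite overlap, and invoking Lemma \ref{le5} one more time to pass back to $\|g_{\ess}\|_{L^2}^2$, will give the displayed inequality, hence the lemma.

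The main obstacle is the second step. For a general $\phi\in\mathbf\Phi$ the piece $\{(\xi,\phi(\xi)):\xi\in L\cap\tau\}$ of the cone need not lie in any affine subspace, so the slab geometry used in \cite{OW} is not directly available; this is precisely what Lemmas \ref{apt} and \ref{eqaaa} are designed to supply — the former placing $L\cap\tau$ within $R^{-1/2+\delta_m}$ of the linearization $\bar V=T_\eta L+e_{n+1}$, the latter furnishing the clean dichotomy between transversality of $V$ and $W$ and angular compression of $L$. Once these geometric inputs are in place, the remaining estimates (the orthogonality bookkeeping and the two applications of the reverse H\"ormander bound) run exactly as in \cite{OW}.
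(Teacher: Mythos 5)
Your list of ingredients is the right one (this is exactly how the paper intends the lemma to be proved: carry over Ou--Wang's Lemma 5.13, with Lemma \ref{apt} and Lemma \ref{eqaaa} supplying the missing affine/slab geometry, and \eqref{equi}, Lemma \ref{le5}, Lemma \ref{tanc} and $L^2$ orthogonality doing the analytic work), but the way you organize them has a genuine gap. Your first reduction replaces the group structure by a single global inequality, namely that $\int_{N_{\rho^{1/2+\delta_m}}(Z+b)\cap B_j}|E\tilde g_{\ess}|^2$ is bounded by $R^{O(\delta_m)}(R^{1/2}/\rho^{1/2})^{-(n+1-m)}\int_{B_j}|E\tilde g_{\ess}|^2$, and you then claim a ``pigeonholing'' reduces this to the per-ball estimate for balls $B\subset X_a$. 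But the left-hand set is covered by \emph{all} the $R^{1/2+\delta_m}$-balls, including those in $X_b$, and on $X_b$ balls the equidistribution estimate \eqref{equi} is simply not available (case $b$) of Lemma \ref{eqaaa} is exactly the situation in which the Fourier-support/locally-constant argument breaks down; the paper only discards $X_b$ via the vanishing of the broad norm \eqref{tangt}, which is of no use for an $L^2$ statement). The defining property of $g_{\ess}$ is only that \emph{each group} $(\tn_0,w_0)$ has some wave packet meeting $X_a$ somewhere along its length; it does not imply that the spatial $L^2$ mass of $E\tilde g_{\ess,b}$ over $N_{\rho^{1/2+\delta_m}}(Z+b)\cap B_j$ is carried by $X_a$, so summing per-ball estimates over $B\subset X_a$ does not control your reduced inequality. (There is also a smaller imprecision in the same step: Lemma \ref{le5} at scale $\rho$ gives two-sided comparability of $\rho\|\tilde g_{\ess,b}\|_{L^2}^2$ with the mass over $B_{10\rho}$, not over $N_{\rho^{1/2+\delta_m}}(Z+b)\cap B_j$; on $B_{10\rho}\setminus B_j$ the tangency confinement of the rescaled tubes is unavailable, so the claimed ``cancellation'' needs an extra fixed-time Plancherel/localization argument rather than a direct citation of Lemma \ref{le5}.)

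The mechanism that closes this — and it is precisely the point of the per-group formulation in \cite{OW} and in Section \ref{equid section} of the paper — is to never integrate over the whole of $N_{\rho^{1/2+\delta_m}}(Z+b)\cap B_j$ at all. For a fixed group $(\tn_0,w_0)\in\mathbb{T}_{\ess}$, all of its scale-$R$ wave packets (and, via Lemma \ref{tanc}, all of the associated scale-$\rho$ wave packets) travel together within $O(R^{1/2+2\delta})$ of one another inside $B_j$, so they all pass through $O(1)B$ for the particular ball $B\subset X_a$ guaranteed by the definition of $\mathbb{T}_{\ess}$. Using Lemma \ref{le5} as a two-sided equivalence \emph{at that one good ball} converts both $\|\tilde g_{{\rm ess},b,\tn_0,w_0}\|_{L^2}^2$ and $\|g_{\tn_0,w_0}\|_{L^2}^2$ into local masses on $O(1)B$, where \eqref{equi} (applied to the translate $Z+b$, legitimate since its proof only uses the tangent-plane structure, and to the single cap $\tau$ containing the group's directions, so no restricted-set orthogonality over $2B\cap N_{\rho^{1/2+\delta_m}}(Z+b)$ is needed) produces the factor $(R^{1/2}/\rho^{1/2})^{-(n+1-m)}$; the lemma then follows by summing over groups using the global $L^2$ orthogonality of the $g_{\tn_0,w_0}$ and of the $\tilde g_{{\rm ess},b,\tn_0,w_0}$. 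Any correct completion of your plan has to pass through this per-group comparison anyway, so you should restructure the argument around it rather than around the global integral inequality.
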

	Following the approach in \cite{Guth}, we may  choose a finite set of vectors $\mathcal{B} = \{b\}$  where $b\in B_{R^{1/2+\delta_m}}$ such that
	for each $B_j$, we have
	\beqq
	\|Eg_{\ess}\|_{{\rm BL}_{k,A/2}^p(B_j)}^p\lesssim (\log R)\sum_{b\in\mathcal B}\|Ef_{j,\trans,b}^{\ess}\|_{{\rm BL}_{k,A/2}^p(B_j)}^p,
	\eeqq
	where
	$$ f_{j,\trans,b}^{\ess}=e^{-i\phi_y(\xi)}\tilde{g}_{\ess,b},$$
	and  for different choices of $b\in\mathcal B$, the corresponding sets $B_j\cap N_{\rho^{1/2+\delta_m}}(Z+b)$ have finite overlaps.

	Thus, one has
	\beqq
	\|Eg_{\ess}\|_{{\rm BL}_{k,A}^p(B_R)}^p\lesssim (\log R)\sum_j \sum_{b\in\mathcal B}\|Ef_{j,\trans,b}^{\ess}\|_{{\rm BL}_{k,A/2}^p(B_j)}^p,
	\eeqq
	and 
	\beqq
	\sum_{b\in\mathcal B}\|\tilde g_{\tn_0,w_0,b}\|_{L^2}^2\lesssim \|g_{\tn_0,w_0}\|_{L^2}^2.
	\eeqq
	Finally, by the equidistribution estimate \eqref{equi}, one has
	\beqq
	\max_{b\in\mathcal B}\|f_{j,\trans,b}^{\ess}\|_{L^2}^2\leq R^{O(\delta_m)}\Big(\f{R^{1/2}}{\rho^{1/2}}\Big)^{-(n+1-m)}\|g_{\ess}\|_{L^2}^2.
	\eeqq
	Now, we may employ an induction on scales argument to complete the proof. By our assumption on $B_j$, we have
	\begin{align*}
		\|Ef_{j,\trans,b}^{\ess}\|_{{\rm BL}_{k,A/2}^p(B_j)}&\leq C(K,\varepsilon, m,D_Z)\rho^{m\varepsilon}\rho^{\delta(\bar A-\log(A/2))}\rho^{-e+1/2}\|f_{j,\trans,b}^{\ess}\|_{L^2}\\
		&\leq C(K,\varepsilon, m,D_Z)R^{\delta}\rho^{m\varepsilon}R^{\delta(\log{\bar {A}}-\log A)}\rho^{-e+1/2}\|f_{j,\trans,b}^{\ess}\|_{L^2}.
	\end{align*}
	Combing the above estimates together, we have
	\begin{align*}
		\|Ef\|_{{\rm BL}_{k,A}^p(B_R)}^p&\lesssim \log R\sum_j\sum_{b\in\mathcal B}\|Ef_{j,\trans,b}^{\ess}\|_{{\rm BL}_{k,A/2}^p(B_j)}^p\\
		&\lesssim R^{O(\delta_m)}\big(C(K,\varepsilon,m,D_Z)\rho^{m\varepsilon}R^{\delta(\log{\bar A}-\log A)}\rho^{-e+1/2})^p\sum_{j,b}\|f_{j,\trans,b}^{\ess}\|_{L^2}^p\\
		&\lesssim R^{O(\delta_m)}\big(C(K,\varepsilon,m,D_Z)\rho^{m\varepsilon} R^{\delta(\log{\bar A}- A )}\rho^{-e+1/2})^p\Big(\f{R^{1/2}}{\rho^{1/2}}\Big)^{-(n+1-m)(p/2-1)}\|f\|_{L^2}^p.
	\end{align*}
	If $p=p(m,k)=2\f{m+k}{m+k-2}$, then
	\beqq
	\rho^{(-e+1/2)p}\Big(\f{R^{1/2}}{\rho^{1/2}}\Big)^{-(n+1-m)(p/2-1)}=R^{(-e+1/2)p},
	\eeqq
	hence,
	\beqq
	\|Ef\|_{{\rm BL}_{k,A}^p(B_R)}\leq C(\varepsilon,D_Z)R^{O(\delta_m)}(R/\rho)^{-mp\varepsilon}\Big(C(K,\varepsilon,m,D_Z)R^{m\varepsilon} R^{\delta(\log{\bar A}-\log A)}R^{-e+1/2}\Big)^p\|f\|_{L^2}^p.
	\eeqq
	Note $R/\rho=R^{O(\delta_{m-1})}$, by choosing $\delta_m\ll \varepsilon\delta_{m-1}$ such that
	\beqq
	C(\varepsilon,D_Z)R^{O(\delta_m)}(R/\rho)^{-mp\varepsilon}\leq 1,
	\eeqq
	then the induction closes and the proof is
	complete.
	
	\section{Parabolic rescaling}\label{paralemma}
	To prove Theorem \ref{theoa2}, we will employ an induction on scales argument. To fulfill the argument, a crucial ingredient is a parabolic rescaling lemma which connects estimates at different scales and facilitates the induction argument. 
	
	For the cone restriction setting in \cite{OW}, one can use the standard Lorentz transformation to tilt the light cone $(\xi, |\xi|)$ into the form $(\xi, \xi_1^2+\cdots+\xi_{n-1}^2/2\xi_n)$ which is well-suited for performing the parabolic rescaling argument, since each vertical slice of this cone is parabolic. However, in our setup of the local smoothing problem for the operator $e^{it\sqrt{-\Delta}}$, the Lorentz transformation is not readily available unless the right-hand side is $L^2$-based.  We can get around this difficulty using the  reductions shown in Section \ref{section-2}. It then suffices to consider phase functions in the class $\mathbf\Phi(R)$, which depends on the scale $R$.

	\begin{lemma}\label{pro2}
		Suppose $\nu$ is a slab of  dimension $1\times K^{-1}\times \cdots \times K^{-1}$ with central line lying in the direction  $\xi_\nu$, then
		\beqq
		\|e^{it\phi_R(D)}f^\nu\|_{L^p(B_R^{n}\times [-R,R])}\leq Q_p(R/K^{2}) K^{-2n(\frac{1}{2}-\frac{1}{p})+\frac{2}{p}-\varepsilon} R^{n(\f{1}{2}-\f{1}{p})+\varepsilon}\|f^\nu\|_{L^p}+{\rm RapDec}(R)\|f\|_{L^p}.
		\eeqq
	\end{lemma}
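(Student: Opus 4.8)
The plan is to reduce the lemma to an application of the definition of $Q_p(\cdot)$ at the smaller scale $R/K^2$, after performing a parabolic rescaling adapted to the slab $\nu$. First I would recall that the slab $\nu$ has dimensions $1\times K^{-1}\times\cdots\times K^{-1}$ with central direction $\xi_\nu$, and that $\phi_R$ lies in the class $\mathbf\Phi(R)$. The key geometric fact is that on such a slab, after subtracting the linear part $\nabla\phi_R(\xi_\nu)\cdot\xi$ and translating in $x$ and $t$ appropriately, the phase function, when restricted to $\nu$, looks like a rescaled copy of a phase function of the same type but at scale $R/K^2$. Concretely, I would write $\xi = \xi_\nu + \zeta$ with $|\zeta|\lesssim K^{-1}$ in the directions transverse to $\xi_\nu$, and change variables $\zeta \mapsto K^{-1}\zeta$ (in the transverse directions), with the dual scaling $x\mapsto K x$ in those directions, and $t\mapsto K^2 t$, together with the Galilean-type shift $x \mapsto x - t\nabla\phi_R(\xi_\nu)$ to remove the linear term. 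Under this change of variables, the physical ball $B_R^n\times[-R,R]$ becomes (essentially contained in) a ball $B_{R/K^2}^n\times[-R/K^2,R/K^2]$ after absorbing the harmless elongated direction, and the new phase function $\tilde\phi$ is homogeneous of degree one, has Hessian eigenvalues in $[1/2,2]$ (possibly after a further harmless normalization), and satisfies the derivative bounds $\mathbf{H_3}$ with the new scale $\tilde R = R/K^2$ — this is where one must check that $K = K_0 R^{\tilde\delta}$ rescales correctly to $\tilde K = K_0 (R/K^2)^{\tilde\delta}$, using $\tilde\delta$ small and $K_0$ large; the error term ${\rm E}_R$ rescales into an admissible error for $\mathbf\Phi(R/K^2)$.

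Once the rescaled phase function $\tilde\phi$ is verified to lie in $\mathbf\Phi(R/K^2)$ and the rescaled Fourier support lies in ${\rm N}_{\varepsilon_0}(e_n)$, I would apply the definition of $Q_p(R/K^2)$ to get
\beqq
\|e^{it\tilde\phi(D)}g\|_{L^p(B_{R/K^2}^n\times[-R/K^2,R/K^2])}\leq Q_p(R/K^2)(R/K^2)^{n(\frac12-\frac1p)}\|g\|_{L^p},
\eeqq
where $g$ is the rescaled version of $f^\nu$. Then I would undo the change of variables. The spatial and temporal scalings contribute Jacobian factors to both the $L^p$ norm on the left and the $L^p$ norm of $f^\nu$ on the right; tracking the powers of $K$ from the $(n-1)$ transverse spatial directions (each contributing $K$), the one elongated spatial direction, and the time variable (contributing $K^2$), together with the factor $(R/K^2)^{n(\frac12-\frac1p)} = R^{n(\frac12-\frac1p)}K^{-2n(\frac12-\frac1p)}$, should produce exactly the claimed constant $K^{-2n(\frac12-\frac1p)+\frac2p-\varepsilon}R^{n(\frac12-\frac1p)+\varepsilon}$ after also accounting for the $R^\varepsilon$ slack. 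A small book-keeping point: the physical ball $B_R^n\times[-R,R]$ does not map exactly onto a ball at scale $R/K^2$ but rather a slab that is longer in the $\xi_\nu$ direction; one covers this slab by finitely-overlapping balls of radius $R/K^2$ — only $O(K)$ of them in the elongated direction — and uses the triangle inequality and Minkowski, which is precisely the kind of step that generates the ${\rm RapDec}(R)\|f\|_{L^p}$ tail via the tails of the wave packets localized away from the relevant slab (or, alternatively, via a transference lemma like Lemma \ref{lemma-1}).

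The main obstacle I anticipate is the careful verification that the rescaled phase function genuinely lands in the class $\mathbf\Phi(R/K^2)$ rather than merely in the bigger class $\mathbf\Phi$. This is the whole reason the paper works with the scale-dependent family $\mathbf\Phi(R)$: the structural form in $\mathbf{H_3}$, namely $\tilde\phi(\xi) = \frac{\xi_1^2+\cdots+\xi_{n-1}^2}{2\xi_n} + \tilde K^{-4}{\rm E}(\xi)$ with $|\partial^\alpha{\rm E}|\leq c_{\rm par}$, must be preserved under the rescaling, and one needs to check that the new error term, which arises both from the old ${\rm E}_R$ and from the higher-order Taylor remainder of the quadratic part of $\phi_R$ on the slab $\nu$ (which is genuinely of size comparable to $K^{-1}$ before rescaling, hence of controlled size after), satisfies the right bound with the new parameter $\tilde K = K_0(R/K^2)^{\tilde\delta}$. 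Getting the powers of $K$ to match requires that $\tilde\delta$ be chosen so that $\tilde K^{-4}$ absorbs the rescaled errors; since $\tilde K \ll K$, this is a constraint that forces $\tilde\delta$ small relative to the other parameters, consistent with the hierarchy $1\ll A\lesssim K^\varepsilon\lesssim R^{\varepsilon^2}$ and $K = K_0 R^{\tilde\delta}$ stipulated earlier. The remaining steps — the Jacobian computations and the covering/triangle-inequality argument producing the rapidly decaying tail — are routine. I would also note that by the remark following the definition of $\mathbf\Phi(R)$, once we know $\mathbf\Phi(R/K^2)\subset\mathbf\Phi$, any auxiliary estimate needed for the wave packet localization (Lemma \ref{lew}, Lemma \ref{tanc}) is available.
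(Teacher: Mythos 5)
Your proposal follows essentially the same route as the paper: shear the slab to a neighborhood of $e_n$, remove the linear terms by a Galilean-type change of variables in $(x,t)$, apply the parabolic rescaling $\xi'\to K^{-1}\xi'$, $x'\to Kx'$, $t\to K^2 t$, verify that the rescaled phase lies in $\mathbf\Phi(R/K^2)$ (the crucial point, handled in the paper by an exact computation of the new error term with the factor $\widetilde K^{-\frac{8\tilde\delta}{1-2\tilde\delta}}$), and then cover the image cylinder by balls of scale $R/K^2$ and apply the definition of $Q_p(R/K^2)$ together with the localization argument of Lemma \ref{lemma-1}. The minor inaccuracies (the quadratic-over-linear part transforms exactly under the shear, so all the error comes from ${\rm E}_R$; and the elongated $x_n$-direction needs $O(K^2)$, not $O(K)$, balls, which is harmless once the localization lemma is used) do not affect the correctness of the plan.
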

	
	\begin{proof}
		Without loss of generality, we may assume any $\xi\in\nu$ satisfies
		\beqq
		\Big|\frac{\xi'}{\xi_n}-\xi'_\nu\Big|\leq  \varepsilon_0K^{-1},  \;\; \xi'_\nu \in \R^{n-1}, |\xi'_\nu|\lesssim \varepsilon_0.
		\eeqq
		First, we make a change of variables with respect to $\xi'$ to locate  $\nu$	in a neighborhood of $e_n$
		$$\xi'\rightarrow \xi_n\xi'_\nu+\xi',$$
		correspondingly, the phase function becomes
		\beqq
		\frac{|\xi'|^2}{2\xi_n}+\xi'_\nu\cdot \xi'+\frac{|\xi'_\nu|^2\xi_n}{2}+K^{-4}{\rm E}_R(\xi_n\xi'_\nu+\xi',\xi_n).
		\eeqq 	
		Using the homogeneity  of ${\rm E}_R$ and Taylor's formula, we have 
		\beqq
		\begin{aligned}
			{\rm E}_R(\xi_n\xi'_\nu+\xi',\xi_n)&=\xi_n{\rm E}_R(\xi'_\nu+\frac{\xi'}{\xi_n},1)\\
			&=\xi_n({\rm E}_R(\xi'_\nu,1)+\partial_{\xi'}{\rm E}_R(\xi'_\nu,1)\frac{\xi'}{\xi_n}+\bar{\rm E}_R(\xi)).
		\end{aligned}
		\eeqq	
		Here  $\bar{\rm E}_R(\xi)$ denotes the remainder coming from Taylor's formula. Taking spatial variables into account, the associate phase function reads
		\beqq
		\begin{aligned}
			\big(x'+t\xi'_\nu+tK^{-4}\partial_{\xi'}{\rm E}_R(\xi'_\nu,1)\big)\cdot \xi'+(x_n+t\tfrac{|\xi'_\nu|^2}{2}+tK^{-4}{\rm E}_R(\xi'_\nu,1))\xi_n+t\big(\frac{|\xi'|^2}{2\xi_n}+K^{-4}\bar{\rm E}_R(\xi)\big).
		\end{aligned}
		\eeqq
		Now we perform the change of variables in $(x,t)$ by:
		$$\begin{cases}
			x'+t\xi'_\nu+tK^{-4}\partial_{\xi'}{\rm E}_R(\xi'_\nu,1)&\longrightarrow x',\\
			x_n+t\tfrac{|\xi'_\nu|^2}{2}+tK^{-4}{\rm E}_R(\xi'_\nu,1)&\longrightarrow x_n,\\\quad \quad \quad \quad \qquad 
			t&\longrightarrow t.
		\end{cases}	$$
		Then under the new coordinates,  $B_R^{n}\times [-R,R]$ is transformed into a subset of  $B_{CR}^{n}\times [-CR,CR]$, where $C>0$ is a large absolute constant.	
		
		Now, we perform parabolic rescaling with respect to $x,t ,\xi$ as follows	
		\beqq
		\xi'\longrightarrow K^{-1}\xi', \quad x'\longrightarrow Kx'\,,t\longrightarrow K^2t,
		\eeqq
		the cylinder  $B_{CR}^n\times [-CR,CR]$ is further changed to $B_{CR/K}^{n-1}\times (-CR,CR)\times (-CR/K^2,CR/K^2)$. Now the new phase function  is given by
		\beqq
		\begin{aligned}
			\widetilde{\phi}_{\widetilde{R}}(\xi)=\frac{\xi_1^2+\cdots+\xi_{n-1}^2}{2\xi_n}
			+\widetilde{K}^{-4}{\rm \widetilde E}_{\widetilde{R}}(\xi),\quad \text{where } \widetilde R:=R/K^2,\,\widetilde K:=K_0\widetilde R^{\tilde \delta},
		\end{aligned}
		\eeqq
		where 
		$${\rm \widetilde E}_{\widetilde{R}}(\xi):=\widetilde{K}^{-\tfrac{8\tilde{\delta}}{1-2\tilde\delta}}\xi_nK^2\Big({\rm E}_R(\xi'_\nu+K^{-1}\frac{\xi'}{\xi_n},1)
		-{\rm E}_R(\xi'_\nu,1)-K^{-1}\partial_{\xi'}{\rm E}_R(\xi'_\nu,1)\frac{\xi'}{\xi_n}\Big).$$ 	If we invoke Taylor's formula with the remainder of integral form, we have
		$${\rm \widetilde E}_{\widetilde{R}}(\xi)=\widetilde{K}^{-\tfrac{8\tilde{\delta}}{1-2\tilde\delta}}\Big(\frac{1}{2}\frac{\langle \partial_{\xi'\xi'}^2{\rm E}_R(\xi_\nu,1)\xi',\xi'\rangle}{\xi_n}+K^{-1}\sum_{|\alpha|=3}\frac{3}{\alpha!}\tfrac{(\xi')^\alpha}{\xi_n^2}\int_0^1 (1-t)^2(\partial^\alpha{\rm E}_R)(tK^{-1}\tfrac{\xi'}{\xi_n}+\xi_\nu,1)dt\Big).$$ 
		It is then easy to see that derivatives of ${\rm \widetilde E}_{\widetilde{R}}$ do not blow up in $K$.
		If we set
		\begin{equation*}
			\widehat{g}(\xi)=\widehat{f}^\nu\big(\xi_n \xi'_\nu+K^{-1}\xi',\xi_n\big)
		\end{equation*}
		it is then easy to verify that ${\rm supp}\;{\widehat{g}}\subset {\rm N}_{\varepsilon_0}(e_n)$. 
		By choosing $K_0$ sufficiently large, it is straightforward to check that ${\rm \widetilde E}_{\widetilde{R}}(\xi)$ satisfies condition ${\bf H_3}$, and therefore  $\widetilde{\phi}_{\widetilde{R}}\in\mathbf\Phi(\widetilde R)$.

		Therefore, it suffices to estimate
		\beqq
		e^{it\widetilde{\phi}_{\widetilde{R}}(D)}g:=\int_{\R^{n}} e^{i(x\cdot \xi+t\widetilde{\phi}_{\widetilde{R}}(\xi))}\widetilde a(\xi',\xi_n)\widehat{g}(\xi', \xi_n)\,d \xi.
		\eeqq
		We decompose $B_{CR/K}^{n-1}\times (-CR,CR)\times (-CR/K^2,CR/K^2)$ into a family of finitely overlapping balls of scale $\widetilde R$, i.e.  $$B_{CR/K^2}^{n-1}\times (-CR,CR)\times (-CR/K^2,CR/K^2)\subset \bigcup_y Q_{y,\widetilde R},$$	
		where $y\in \R^{n+1}$ is the center of $Q_{y, \widetilde R}$.
		
		Finally, by a localization argument as in Lemma \ref{lemma-1},  we have
		\begin{align*}
			&\quad \|e^{it\phi_R(D)}f^\nu\|_{L^p(B_R^n\times [-R,R])}^p\lesssim K^{(n+1)-(n-1)p}\sum_y\big\|e^{it\widetilde{\phi}_{\widetilde{R}}(D)}g\|_{L^p(Q_{y,\widetilde{R}})}^p\\
			&\lesssim K^{(n+1)-(n-1)p}\sum_y\|e^{it\widetilde{\phi}_{\widetilde{R}}(D)}\big(\Psi_{B^{n}_{(R/K^2)^{1+\varepsilon/n}}(y)}g\big)\|_{L^p(Q_{y,\widetilde{R}})}^p+{\rm RapDec}(R)\|f\|_{L^p}^p\\
			&\lesssim K^{(n+1)-(n-1)p}\sum_y\|\Psi_{B^{n}_{(R/K^2)^{1+\varepsilon/n}}(y)}g\|_{L^p(\R^n)}^p Q_p^p(R/K^2) \Big(R/K^2\Big)^{np(\frac{1}{2}-\frac{1}{p})}+{\rm RapDec}(R)\|f\|_{L^p}^p\\
			&\lesssim K^{(n+1)-(n-1)p}\|g\|_{L^p(\R^n)}^p Q_p^p(R/K^2) \Big(R/K^2\Big)^{np(\frac{1}{2}-\frac{1}{p})+\varepsilon}+{\rm RapDec}(R)\|f\|_{L^p}^p\\
			&\lesssim K^{-2np(\frac{1}{2}-\frac{1}{p})+2-\varepsilon}\|f^\nu\|_{L^p(\R^n)}^p R^{np(\frac{1}{2}-\frac{1}{p})+\varepsilon}Q_p^p(R/K^2)+{\rm RapDec}(R)\|f\|_{L^p}^p. \end{align*}
	\end{proof}
	
	
	\section{Proof of the main theorem}\label{section-7}
	
	Roughly speaking, the strategy of proving Theorem \ref{theoa2} is to decompose $e^{it\phi_R(D)}f$ into two terms: a ``narrow" term and a ``broad" term. The narrow term comes from the caps of which the normal vectors make a small angle with some $(k-1)$-plane. The broad part comes from the remaining caps. The broad term can be bounded via the broad norm estimate established in Section \ref{section-5}. To bound the narrow term, 
	we need a narrow decoupling theorem.
	
	Let $\delta>0$, $\nu$ be a slab of width $K^{-1}$ defined as usual. We use $\nu_\delta$ to denote the $\delta$ neighborhood of the corresponding slab on the cone defined by 
	\beq
	\nu_\delta:=\{(\eta, \eta_{n+1})\in \R^n\times \R: {\rm dist}((\eta, \eta_{n+1}), (\xi, \phi_R(\xi)))\lesssim \delta, \text{ for some }\xi \in \nu\},
	\eeq
	where $\phi_R$ is a phase function in the class $\mathbf\Phi(R)$.
	
	\begin{theorem}[Narrow decoupling theorem]\label{narrow}
		Let $ k\geq 3$ and $F=\sum_\nu F_\nu$ be a sum over $K^{-1}$ slabs with ${\rm supp}\;{\widehat{F}_\nu}\subset \nu_{K^{-2}}$. Assume that there is a $(k-1)$-dimensional vector space $V$, such that each cap $\nu_{K^{-2}}$ contains a point with normal lying in a $K^{-2}$ neighborhood of $V$. Then for any $\varepsilon>0$,
		\beq
		\|F\|_{L^p(B_{K^2}^{n+1})}\leq C_\varepsilon K^\varepsilon \Big(\sum_{\nu} \|F_\nu\|_{L^p\big(w_{B_{K^2}^{n+1}}\big)}^2\Big)^{1/2}, \; 2\leq p\leq \f{2(k-1)}{k-3}.
		\eeq
	\end{theorem}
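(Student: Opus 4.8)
The plan is to deduce Theorem \ref{narrow} from the sharp Bourgain--Demeter $\ell^2$-decoupling for a cone in the lower dimension $\R^{k-1}$, after replacing $\phi_R$ by its quadratic model and then using the geometry of Section \ref{section-4} to recognize the narrow caps as an affine copy of the caps of such a lower-dimensional cone. The endpoint $p=\tfrac{2(k-1)}{k-3}$ is precisely the critical exponent of $\ell^2$-decoupling for a cone in $\R^{k-1}$, and when $k=3$ the hypothesis leaves only $O(1)$ relevant caps $\nu$, so the bound is then trivial; thus one may take $k\ge4$. As a first reduction, write $\phi_R(\xi)=\phi_R^0(\xi)+K^{-4}{\rm E}_R(\xi)$ with $\phi_R^0(\xi):=\tfrac{\xi_1^2+\cdots+\xi_{n-1}^2}{2\xi_n}$; since $|\partial^\alpha{\rm E}_R|\le c_{\rm par}\ll1$ on ${\rm N}_{\varepsilon_0}(e_n)$, homogeneity gives $|K^{-4}{\rm E}_R(\xi)|\lesssim K^{-4}\ll K^{-2}$, so the graph of $\phi_R$ lies within vertical distance $\ll K^{-2}$ of the graph of $\phi_R^0$. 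Hence each $\nu_{K^{-2}}$ is contained in the $O(K^{-2})$-neighborhood of the corresponding cap of $\phi_R^0$, and it suffices to prove the decoupling for $\phi_R^0$ on $B_{K^2}^{n+1}$ at resolution $O(K^{-2})$. Moreover $\phi_R^0$ is an invertible linear image of the light cone $\mathcal C$ and the hypothesis on normals is linear-invariant, so we may freely pass between $\phi_R^0$ and $\mathcal C$.

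Next I would extract the lower-dimensional cone structure. Let $V$ be the $(k-1)$-plane from the hypothesis and apply the dichotomy of Lemma \ref{eqaaa} (with angle threshold $K^{-2}$) to the set $L$ of frequencies whose cone normal is parallel to $V$. In the tangential alternative, $L$ -- hence, after $K^{-2}$-thickening, $\bigcup_\nu\nu_{K^{-2}}$ -- is contained in a slab of dimensions $\sim1\times K^{-2}\times\cdots\times K^{-2}$, which meets only $O(1)$ of the $K^{-1}$-slabs $\nu$, and the claim follows from the triangle inequality and Cauchy--Schwarz. In the transversal alternative, fix $\eta\in L$, put $\bar V:=T_\eta L+e_{n+1}$ as in Section \ref{gem}, and note (by a second-order Taylor expansion, the scale-$K^2$ analogue of Lemma \ref{apt}) that every such $\nu_{K^{-2}}$ lies in the $O(K^{-2})$-neighborhood of $\bar V$. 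Here $\dim\bar V=k-1$ and $\{(\xi,\phi_R^0(\xi)):\xi\in L\}$ is a $(k-2)$-dimensional piece of a cone inside $\bar V$ -- a cone in $\R^{k-1}$ with one flat radial direction and $k-3$ nonvanishing principal curvatures. Choosing coordinates in which $\bar V$ is a coordinate subspace, the caps $\nu_{K^{-2}}$ become, up to comparable dilations, the canonical plates of this cone, lying inside a cylinder over the $n+1-(k-1)$ complementary flat directions.

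Finally I would apply the lower-dimensional decoupling: invoke the Bourgain--Demeter $\ell^2$-decoupling \cite{BoDe2015} for the cone in $\R^{k-1}$ at resolution $O(K^{-2})$, which holds exactly for $2\le p\le\tfrac{2(k-1)}{k-3}$, and upgrade it by cylindrical decoupling to the product with the remaining flat directions. This decouples $\|F\|_{L^p(B_{K^2}^{n+1})}$ into the $\nu_{K^{-2}}$-pieces with constant $C_\varepsilon K^\varepsilon$, and undoing the affine changes of variables from the previous steps -- which preserve $\ell^2$-decoupling -- gives the stated inequality.

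The main obstacle is the geometric step: one must verify, using the homogeneity and convexity of $\phi_R$ together with the dichotomy of Lemma \ref{eqaaa}, that the condition ``all normals $K^{-2}$-close to the $(k-1)$-plane $V$'' genuinely produces a curved lower-dimensional cone rather than merely a lower-dimensional flat piece, and then propagate the angle bookkeeping (in particular the $K^{-4}$-transversality of $\bar V$ and $W$) through the accompanying linear maps so that they distort the $K^{-2}$-caps only in a harmless way. It is worth emphasizing that this argument operates at the fixed scale $K^2$, where $K^{-4}{\rm E}_R$ sits below the resolution and the reduction to the model cone is free; there is no analogous reduction of the \emph{large}-scale (scale $R$) narrow estimate to the circular-cone case via a Pramanik--Seeger approximation, since the parabolic rescaling of Lemma \ref{pro2} carries the phase through the whole scale-indexed family $\mathbf\Phi(R)$ -- which is exactly why the large-scale argument is organized as the induction on scales of Section \ref{section-7} rather than as a single decoupling inequality.
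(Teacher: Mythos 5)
Your opening reduction --- replacing $\phi_R$ by the quadratic model because $K^{-4}{\rm E}_R$ (and the resulting $O(K^{-4})$ perturbation of the normals) sits below the $K^{-2}$ cap resolution on $B_{K^2}^{n+1}$, and then passing to the circular cone by a linear/Lorentz change of variables --- is exactly what the paper does. At that point, however, the paper's proof is a citation: Theorem 2.3 of \cite{Ha} is precisely the narrow decoupling for the circular cone, and the paper stops there. You instead try to re-derive that ingredient from Section \ref{section-4} together with lower-dimensional Bourgain--Demeter and cylindrical decoupling, and it is in that re-derivation that there is a genuine gap.

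The flawed step is the claim that every cap $\nu_{K^{-2}}$ lies in an $O(K^{-2})$-neighborhood of $\bar V=T_\eta L+\langle e_{n+1}\rangle$. The hypothesis only forces each $\nu$ to contain \emph{one} point whose normal is $K^{-2}$-close to $V$; the cap itself still has angular width $K^{-1}$ in the directions transverse to $L$ inside the cone, and the cone is not tangent to the $(k-1)$-plane $\bar V$ in those directions (it cannot be, since $\dim\bar V=k-1<n$ while the tangent space of the cone is $n$-dimensional). Moving angular distance $\sim K^{-1}$ off $L$ displaces the cone point from $\bar V$ by $\sim K^{-1}$, not $K^{-2}$, so no second-order Taylor expansion can give what you assert; Lemma \ref{apt} concerns the set $L$ itself, not a $K^{-1}$-thickening of it. The statement that the cylindrical decoupling actually needs --- and which is the content of \cite{Ha} (and of the corresponding lemma in \cite{OW}) --- is that the \emph{orthogonal projections} of the caps onto $\bar V$ lie within $O(K^{-2})$ of the $(k-2)$-dimensional sub-cone spanned there, together with the multiplicity bound that each $K^{-1}$-plate of that lower-dimensional cone receives the projections of only $O(1)$ caps $\nu$; without the latter you cannot pass between the $\ell^2$ sum over lower-dimensional plates and the $\ell^2$ sum over the $\nu$'s. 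A second point your sketch does not engage with is uniformity: in the transversal alternative of Lemma \ref{eqaaa} the sub-cone over $L$ is a cone over a sphere whose radius can be as small as $\sim K^{-2}$, so the lower-dimensional cone is in general highly eccentric, and one must either rescale it (tracking how the anisotropic $K^{-2}$-thickening and the $K^{-1}$-caps transform) or treat the very thin regimes separately before Bourgain--Demeter in $\R^{k-1}$ can be applied with constants independent of that eccentricity. Your peripheral remarks are fine (the $k=3$ case is indeed trivial with $O(1)$ caps, and $p=\frac{2(k-1)}{k-3}$ is the correct critical exponent for the cone in $\R^{k-1}$), and your concluding observation about why the paper organizes the large-scale argument as an induction over $\mathbf\Phi(R)$ is accurate; but the core lemma you would need is exactly the one the paper imports from \cite{Ha}, and your justification of it does not work as written.
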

	Theorem \ref{narrow} can be deduced from the Theorem 2.3 in \cite{Ha} for the case when the phase function is the circular cone.  By Lorentz transformation, it is also valid for the phase function of the form $(\xi_1^2+\cdots +\xi_{n-1}^2)/2\xi_n$.  Since $\phi_R$ is in the class $\mathbf\Phi(R)$, which is $K^{-4}$-close to the above standard form, Theorem \ref{narrow} is a immediate corollary of Theorem 2.3 in \cite{Ha}.

	Theorem \ref{theoa2} can be deduced from the following proposition. \begin{proposition}\label{proposition-6}
		Let $k\geq 2$. For all $K$, $\varepsilon>0$,  and $\bar{p}(k,n)\leq p\leq 2\frac{k-1}{k-2}$,
		where
		\[\bar{p}(k,n)=
		\begin{cases}
			2\dfrac{(n+1)}{n}\quad &\;\; k=2,\\
			2\dfrac{2n-k+5}{2n-k+3}\quad &\;\; k\geq 3,
		\end{cases}.\]
		If
		\beqq
		\|e^{it\phi_R(D)}f\|_{{\rm BL}_{k,A}^p(C_R^{n+1})}\lesssim_{K,\varepsilon}
		R^{n(\frac{1}{2}-\frac{1}{p})+\varepsilon}\|f\|_{L^p},
		\eeqq
		then we have
		\beqq
		\|e^{it\phi_R(D)}f\|_{L^p(C_R^{n+1})}\lesssim_{\varepsilon}R^{n(\frac{1}{2}-\frac{1}{p})+\varepsilon}\|f\|_{L^p}.
		\eeqq
	\end{proposition}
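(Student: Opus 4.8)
The plan is to run the standard Guth-type broad--narrow dichotomy at scale $K$, iterate it via induction on the scale $R$, and use the parabolic rescaling of Lemma \ref{pro2} at each step to absorb the narrow contribution. Fix $\varepsilon>0$ and suppose by induction that the desired global bound $\|e^{it\phi_R(D)}f\|_{L^p(C_R^{n+1})}\lesssim_\varepsilon R^{n(1/2-1/p)+\varepsilon}\|f\|_{L^p}$ holds for all scales smaller than $R$ and all phase functions in the appropriate class $\mathbf\Phi(\cdot)$; equivalently, $Q_p(\rho)\lesssim_\varepsilon \rho^\varepsilon$ for $\rho<R$. We want to upgrade this to scale $R$. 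First I would chop $C_R^{n+1}$ into balls $B_{K^2}^{n+1}$ and, on each such ball, split $f=\sum_\tau f^\tau$ into the $K^{-1}$ caps. On each $B_{K^2}^{n+1}$ either there is a $(k-1)$-plane $V$ capturing all but $k$ of the caps in the sense of the $k$-broad definition (the ``broad'' alternative), in which case $|e^{it\phi_R(D)}f|^p \lesssim K^{O(1)}\,\mu_{Ef}(B_{K^2}^{n+1}) + \text{(contribution of the $\lesssim A$ exceptional caps)}$, or else all the caps that matter have normals within $K^{-2}$ of some $(k-1)$-plane $V$ (the ``narrow'' alternative).

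In the broad case, summing over the $B_{K^2}^{n+1}$'s gives $\|e^{it\phi_R(D)}f\|_{L^p(C_R^{n+1})}^p \lesssim K^{O(1)}\|e^{it\phi_R(D)}f\|_{\mathrm{BL}_{k,A}^p(C_R^{n+1})}^p + (\text{exceptional terms})$, and the hypothesis of the proposition bounds the first term by $K^{O(1)}R^{np(1/2-1/p)+\varepsilon p}\|f\|_{L^p}^p$, which is acceptable since $K\lesssim R^{\varepsilon^2}$. In the narrow case, I would apply the narrow decoupling theorem, Theorem \ref{narrow}, on each $B_{K^2}^{n+1}$ to get $\|e^{it\phi_R(D)}f\|_{L^p(B_{K^2}^{n+1})} \lesssim_\varepsilon K^\varepsilon \big(\sum_\nu \|e^{it\phi_R(D)}f^\nu\|_{L^p(w_{B_{K^2}^{n+1}})}^2\big)^{1/2}$ for $p\le \frac{2(k-1)}{k-3}$; raising to the $p$-th power, summing over the $B_{K^2}^{n+1}$ tiling $C_R^{n+1}$, and using $\ell^2\hookrightarrow\ell^p$ together with the finite overlap of the weights, one obtains $\|e^{it\phi_R(D)}f\|_{L^p(C_R^{n+1})}^p \lesssim_\varepsilon K^{\varepsilon p} K^{(n-1)(p/2-1)}\sum_\nu \|e^{it\phi_R(D)}f^\nu\|_{L^p(\mathbb R^n\times[-R,R])}^p$ (the power of $K$ coming from converting the $\ell^2$ sum to $\ell^p$). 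Now apply the parabolic rescaling lemma, Lemma \ref{pro2}, to each $\|e^{it\phi_R(D)}f^\nu\|_{L^p}$: this bounds it by $Q_p(R/K^2)\,K^{-2n(1/2-1/p)+2/p-\varepsilon}R^{n(1/2-1/p)+\varepsilon}\|f^\nu\|_{L^p} + \mathrm{RapDec}(R)\|f\|_{L^p}$. Feeding this in, using $\sum_\nu\|f^\nu\|_{L^p}^p\lesssim\|f\|_{L^p}^p$ (finite overlap) and the induction hypothesis $Q_p(R/K^2)\lesssim_\varepsilon (R/K^2)^\varepsilon\le R^\varepsilon$, the total power of $K$ works out to a negative power precisely in the claimed range $p\ge\bar p(k,n)$, so the $K$-dependent constant is $\le 1$ for $K$ large; this is what closes the induction on $R$.

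The bookkeeping one must be careful about: the exceptional $\lesssim A$ caps in the broad alternative. The standard fix (as in Guth and in \cite{OW}) is to iterate the broad--narrow decomposition itself: if at a given ball we are not broad, we pass to the narrow decoupling; the $A$ bad caps are handled by noting $A\lesssim K^\varepsilon$ and that one only needs to iterate the dichotomy $O_\varepsilon(1)$ times before the caps become too narrow to subdivide further, at which point a trivial bound suffices. One also needs $\bar p(k,n)\le \frac{2(k-1)}{k-3}$ and $\bar p(k,n)\le 2\frac{k-1}{k-2}$ so that both the narrow decoupling and the broad hypothesis are available on the same range — this is a direct arithmetic check on the definition of $\bar p(k,n)$. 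Finally, one optimizes over $k\in\{2,\dots,n+1\}$: the broad hypothesis needs $p\ge 2\frac{n+k+1}{n+k-1}$ (from Theorem \ref{Theorem-5}/Corollary \ref{cor2}), while the narrow side needs $p\le\frac{2(k-1)}{k-3}$, and balancing these two thresholds is what produces the exponent in Theorem \ref{theoa2} and forces the odd/even distinction.

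I expect the main obstacle to be the precise exponent arithmetic in the narrow case — tracking the power of $K$ generated by $\ell^2\to\ell^p$ against the gain $K^{-2n(1/2-1/p)+2/p}$ from parabolic rescaling, and verifying that their product is a strictly negative power of $K$ exactly on $[\bar p(k,n), 2\frac{k-1}{k-2}]$. The conceptual steps (broad--narrow split, decoupling, rescaling, induction on $R$) are by now routine after \cite{Guth, OW}; the novelty here, already flagged in the introduction, is that the induction must run over the family of classes $\mathbf\Phi(R)$ rather than a fixed phase, so one additional thing to check at the rescaling step is that the rescaled phase $\widetilde\phi_{\widetilde R}$ indeed lands in $\mathbf\Phi(\widetilde R)$ with $\widetilde R=R/K^2$ — but this is exactly the content of Lemma \ref{pro2}, so it may be quoted.
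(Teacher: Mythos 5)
Your overall architecture coincides with the paper's: on each ball $B_{K^2}^{n+1}$ split into the broad part, which is bounded by the hypothesis (the $k$-broad ``norm''), plus the narrow parts $\sum_{\tau\in V_\ell}e^{it\phi_R(D)}f^\tau$; estimate the narrow parts by Theorem \ref{narrow}, rescale each cap by Lemma \ref{pro2}, and close an induction on $R$ using $K=K_0R^{\tilde\delta}$ and the fact that the rescaled phase lies in $\mathbf\Phi(R/K^2)$. However, there is a genuine gap at the decisive step of the narrow-case arithmetic. When you pass from the $\ell^2$ sum produced by the decoupling inequality to an $\ell^p$ sum, you charge the factor $K^{(n-1)(p/2-1)}$, i.e.\ you use the total number $\sim K^{n-1}$ of caps. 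The correct count is the number of caps $\tau$ with ${\rm Ang}(G(\tau),V_\ell)\leq K^{-2}$ for a fixed $(k-1)$-dimensional $V_\ell$, which is $\lesssim\max\{1,K^{k-3}\}$ (the normals of the cone form an $(n-1)$-dimensional surface, and its intersection with a $K^{-2}$-neighborhood of a $(k-1)$-plane is a $K^{-2}$-neighborhood of a $(k-3)$-dimensional set); this yields the factor $\max\bigl\{1,K^{(k-3)(\frac12-\frac1p)p}\bigr\}$ used in the paper, and it is the only place where $k$ enters the narrow side. With your figure, the net power of $K$ against the gain $K^{-2n(\frac12-\frac1p)p+2}$ from Lemma \ref{pro2} is $-\frac{(n+1)(p-2)}{2}+2$, which is nonpositive only for $p\geq 2\frac{n+3}{n+1}$, independently of $k$; since $2\frac{n+3}{n+1}>\bar p(k,n)=2\frac{2n-k+5}{2n-k+3}$ for every $k\leq n+1$, the induction would not close on the claimed range $[\bar p(k,n),2\frac{k-1}{k-2}]$, and the $k$-dependence that produces Theorem \ref{theoa2} would be lost. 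With the correct cardinality the requirement becomes $(2n-k+3)(\frac12-\frac1p)p\geq 2$ for $k\geq3$ (and $n(p-2)\geq2$ for $k=2$), i.e.\ exactly $p\geq\bar p(k,n)$.

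Two smaller points. In the $k$-broad framework there are no ``exceptional caps'' requiring a Bourgain--Guth-type iteration: for the minimizing $V_1,\dots,V_A$ one simply bounds $\int_{B_{K^2}^{n+1}}|e^{it\phi_R(D)}f|^p$ by $K^{O(1)}\max_{\tau\notin V_\ell}\int|e^{it\phi_R(D)}f^\tau|^p+\sum_{\ell=1}^{A}\int|\sum_{\tau\in V_\ell}e^{it\phi_R(D)}f^\tau|^p$, the first term summing to the $k$-broad norm and the factor $A\lesssim K^{\varepsilon}$ being harmless. Also, the optimization producing Theorem \ref{theoa2} balances the broad threshold $2\frac{n+k+1}{n+k-1}$ from Theorem \ref{Theorem-5} against $\bar p(k,n)$ (the narrow rescaling arithmetic above), not against the decoupling range $p\leq\frac{2(k-1)}{k-3}$, which only needs to be checked to contain the relevant exponents. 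Finally, closing the induction uses not merely that the power of $K$ is nonpositive but that $K$ is a fixed power of $R$, so that the factor $K^{-2\varepsilon}$ coming from $Q_p(R/K^2)\lesssim(R/K^2)^{\varepsilon}$ absorbs the residual losses $R^{O(\delta)+\varepsilon_1}$; this is the parameter choice $\delta=\varepsilon_1\sim\varepsilon^2$, $\tilde\delta=\varepsilon$ in the paper.
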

	\begin{proof}[Proof of Theorem \ref{theoa2}] Recall that it suffices to prove the following estimate 
		\begin{equation}\label{eq:7.3}
			\|e^{it\phi_R(D)}f\|_{L^p(C_R^{n+1})}\lesssim_{\varepsilon}R^{n(\f{1}{2}-\f{1}{p})+\varepsilon}\|f\|_{L^p},\quad {\rm supp}\; \widehat{f}\subset {\rm A}(1)\cap {\rm N}_{\varepsilon_0}(e_n).
		\end{equation}
		By Corollary \ref{cor2} and Proposition \ref{proposition-6}, we obtain \eqref{eq:7.3} if
		\beqq
		p> \min_{2\leq k\leq n+1}\max\Big\{2\frac{n+k+1}{n+k-1},\bar{p}(k,n) \Big\}.
		\eeqq
		In particular, if we choose
		\[
		k=\begin{cases}
			\dfrac{n+5}{2}& \text{if}\;\;n ~\text{\rm is odd},\\
			\dfrac{n+4}{2}& \text{if}\;\;n~ \text{\rm is even},
		\end{cases}
		\]
		then the range of $p$ matches the requirement of Theorem \ref{theoa2}.
	\end{proof}
	\begin{proof}[Proof of Proposition \ref{proposition-6}]We invoke the broad-narrow argument to induct on the scale $R$. The base case is trivial to check. Assume that for any $\varepsilon>0$, $0<R'<R/2$, we have
		$$Q_p(R')\le \bar C_\varepsilon R'^\varepsilon,$$
		we need to prove that
		$$Q_p(R)\le \bar C_\varepsilon R^\varepsilon.$$
		
		For a given ball $B^{n+1}_{K^2}\subset C_{R}^{n+1}$, let $V_1\cdots V_A$ be $(k-1)$-dimensional linear subspaces which achieves the minimum in the definition of the $k$-broad ``norm",  we obtain
		\begin{multline*}
			\int_{B^{n+1}_{K^2}} |e^{it\phi_R(D)}f(x)|^pd xd t\lesssim K^{O(1)}\max_{\tau\notin V_{\ell}}\int_{B^{n+1}_{K^2}} \big|e^{it\phi_R(D)}f^\tau(x)\big|^pd xd t\\+\sum_{\ell=1}^A\int_{B^{n+1}_{K^2}}\Big|\sum_{\tau\in V_{\ell}} e^{it\phi_R(D)}f^\tau(x)\Big|^pd xd t.
		\end{multline*}
		Summing over balls $\{B^{n+1}_{K^2}\}$ yields
		\beq\label{eq-26}
		\begin{aligned}
			\int_{C_{R}^{n+1}} \big|e^{it\phi_R(D)}f(x)\big|^p d x d t\lesssim&  K^{O(1)}\sum_{B^{n+1}_{K^2}\subset C_{R}^{n+1}}\min_{V_1,\cdots V_A}\max_{\tau\notin V_{\ell}}\int_{B^{n+1}_{K^2}} |e^{it\phi_R(D)}f^\tau(x)|^pd x d t\\
			&+\sum_{B^{n+1}_{K^2}\subset C_{R}^{n+1}}\sum_{\ell=1}^A \int_{B^{n+1}_{K^2}}\Big|
			\sum_{\tau\in V_{\ell}} e^{it\phi_R(D)}f^\tau(x)\Big|^pd xd t.
		\end{aligned}
		\eeq
		Invoking Corollary \ref{cor2}, we have
		\begin{equation}\label{add-04}
			\sum_{B^{n+1}_{K^2}\subset C_{R}^{n+1}}\min_{V_1,\cdots V_A}\max_{\tau\notin V_{\ell}}\int_{B_{K^2}^{n+1}} |e^{it\phi_R(D)}f^\tau(x)|^pd xd t\lesssim C(\varepsilon,A,K) R^{np(\frac{1}{2}-\frac{1}{p})+ \varepsilon p/2}\|f\|_{L^p}^p.
		\end{equation}
		
		Next, we use  Theorem \ref{narrow} and  Lemma \ref{pro2}
		to estimate the contribution of the second term in the right-hand side of \eqref{eq-26}.  
		It follows from Theorem \ref{narrow} that for any $\delta>0$
		\beq
		\begin{aligned}
			\sum_{\ell=1}^A\int_{B^{n+1}_{K^2}}\Big|\sum_{\tau\in V_{\ell}}& e^{it\phi_R(D)}f^\tau(x)\Big|^pd x d t
			\\&\lesssim C(\delta,A) K^\delta \max\{1,K^{(k-3)(\frac 12-\frac1p)p}\}\sum_\tau\int_{\R^{n+1}} w_{B^{n+1}_{K^2}}\big|e^{it\phi_R(D)}f^\tau(x)\big|^pd xd t,
		\end{aligned}
		\eeq
		where we have used the fact that 
		$$\#\{\tau: \tau \in V_\ell\}\lesssim \max\{1,K^{k-3}\}. $$
		Summing over $B^{n+1}_{K^2}$ in both sides of the above inequality,  we obtain
		\begin{align}
			\sum_{B^{n+1}_{K^2}\subset C_{R}^{n+1}}\sum_{\ell=1}^A&\int_{B^{n+1}_{K^2}}\Big|\sum_{\tau\in V_{\ell}} e^{it\phi_R(D)}f^\tau(x)\Big|^pd x d t\nonumber\\ \lesssim& C(\delta,A)  K^\delta \max\{1,K^{(k-3)(\frac 12-\frac1p)p}\}\sum_{\tau}\int_{\mathbb{R}^{n+1}} w_{C_{R}^{n+1}} \Big|
			e^{it\phi_R(D)}f^\tau(x)\Big|^pd xd t.\label{add-05}
		\end{align}
		Using the rapidly decaying property of the weight function, we have
		\beqq
		\int_{\mathbb{R}^{n+1}} w_{C_{R}^{n+1}} \Big|
		e^{it\phi_R(D)}f^\tau(x)\Big|^pd xd t\leq \int_{C_{R^{1+\delta}}^{n+1}}\Big|
		e^{it\phi_R(D)}f^\tau(x)\Big|^pd xd t+{\rm RapDec}(R)\|f\|_{p}^p.
		\eeqq
		For $\varepsilon_1>0$, we see that by  Lemma \ref{pro2} 
		
		\begin{align*}
			&\int_{C_{R^{1+\delta}}^{n+1}}\Big|
			e^{it\phi_R(D)}f^\tau(x)\Big|^pd xd t\leq \sum_{C_R^{n+1}\subset C_{R^{1+\delta}}^{n+1}}\int_{C_{R}^{n+1}}\Big|
			e^{it\phi_R(D)}f^\tau(x)\Big|^pd xd t\\
			\lesssim_{\varepsilon_1}& K^{-2 n(\frac{1}{2}-\frac{1}{p})p+2-\varepsilon_1}Q^p_p\Big(\f{R}{K^2}\Big)R^{np(\f{1}{2}-\f{1}{p})+\varepsilon_1+(n+1)\delta}\big\|f^\tau\big\|_p^p+{\rm RapDec}(R)\|f\|_{p}^p.
		\end{align*}
		Summing over $\tau$ and noting that
		\beqq
		\sum_{\tau}\|f^\tau\|_p^p\leq C \|f\|_{p}^p, \quad\text{ for} \;\;2\leq p\leq \infty,
		\eeqq
		we have
		\begin{align}
			&\sum_\tau \int_{\mathbb{R}^{n}} w_{C_{R}^{n+1}} \Big|
			e^{it\phi_R(D)}f^\tau(x)\Big|^pd xd t\nonumber\\
			\lesssim_{\varepsilon_1}& K^{-2 n(\frac{1}{2}-\frac{1}{p})p+2-\varepsilon_1}Q^p_p\Big(\f{R}{K^2}\Big)R^{np(\f{1}{2}-\f{1}{p})+\varepsilon_1+(n+1)\delta}\big\|f\big\|_p^p+{\rm RapDec}(R)\|f\|_{p}^p.\label{add-06}
		\end{align}
		Collecting the estimates \eqref{add-04}-\eqref{add-06} and inserting them into \eqref{eq-26},  we obtain 
		\begin{align*}
			\int_{C_{R}^{n+1}}|e^{it\phi_R(D)}f(x)|^p&d xd t\leq C(\varepsilon,A,K)R^{np(\frac{1}{2}-\frac{1}{p})+\varepsilon p/2}\|f\|_{L^p}^p\\&+C(\delta,\varepsilon_1,A)K^\delta R^{np(\f{1}{2}-\f{1}{p})+\varepsilon_1+(n+1)\delta}K^{-e(p,k,n)-\varepsilon_1}Q^p_p\Big(\f{R}{K^2}\Big)\|f\|_{L^p}^p,
		\end{align*}
		where
		$$e(p,k,n):=\max\Big\{2n(\f{1}{2}-\f{1}{p})p-2, 2n(\f{1}{2}-\f{1}{p})p-2-(k-3)(\f{1}{2}-\f{1}{p})p\Big\}.$$
		Note that $e(p,k,n)\ge0,$
		if 
		\[p\ge
		\begin{cases}
			\dfrac{2(n+1)}{n}\quad &\;\; k=2,\\
			2\dfrac{2n-k+5}{2n-k+3}\quad &\;\; k\geq 3,
		\end{cases},\]
		therefore by the definition of $Q_p(R)$, we have
		\beqq
		Q^p_p (R)\leq C(\varepsilon,A,K)R^{\frac{\varepsilon}2p}+C(\delta,\varepsilon_1,A)K^\delta R^{(n+1)\delta+\varepsilon_1}K^{-\varepsilon_1}Q^p_p\Big(\f{R}{K^2}\Big).
		\eeqq
		Invoking the induction hypothesis, we have
		\beqq
		Q^p_p (R)\leq C(\varepsilon,A,K)R^{\frac{\varepsilon}2p}+\bar C_\varepsilon R^\varepsilon C(\delta,\varepsilon_1,A) R^{(n+1)\delta+\varepsilon_1}K^{\delta-\varepsilon_1-2\varepsilon}.
		\eeqq
		The first term is harmless. If we can choose suitable $\delta,\tilde\delta,  \varepsilon_1$ such that 
		\beq \label{want} C(\delta,\varepsilon_1,A) R^{(n+1)\delta+\varepsilon_1}K^{\delta-\varepsilon_1-2\varepsilon}<\f 12,\eeq
		then the induction closes. Fixing $p>\bar{p}(k,n)$, recall that $K=K_0R^{\tilde{\delta}}$, the right hand side of\eqref{want} is bounded above by
		\beq\label{LHS}
		C(\delta,\varepsilon_1,A) R^{(n+1)\delta+\varepsilon_1+\tilde\delta(\delta-\varepsilon_1-2\varepsilon)}
		\eeq
		
		By choosing 
		$$\delta=\varepsilon_1=\frac{\varepsilon^2}{10n},\;\;  \;\;\tilde\delta=\varepsilon,$$ we see that \eqref{LHS} is bounded above by
		\beqq
		C(\varepsilon,A) R^{-\varepsilon^2},
		\eeqq
		which is less than $\f 12$ if $R\gg 1$, the induction closes.
	\end{proof}
	
	Lastly, recall that our extension operator $E$ is defined with respect to a cone of the form $(\xi,\phi(\xi))$ with $\phi\in\mathbf\Phi$. After reducing to the smaller class $\mathbf \Phi(R)$, one may run the argument of Ou--Wang \cite{OW} to see that our $k$-broad and narrow decoupling bounds imply the following generalization of Theorem 1 in \cite{OW} to such a general cone.
	\begin{theorem}\label{restriction}
		For any $n\geq 2$ and
		\[
		p>\begin{cases}
			4& \text{if}\;\; n=2,\\
			2\dfrac{3n+4}{3n}& \text{if}\;\;n>2 ~\text{\rm is even},\\
			2\dfrac{3n+3}{3n-1}& \text{if}\;\;n>2~ \text{\rm is odd},
		\end{cases}
		\]
		then
		\beqq
		\|Ef\|_{L^p(\R^{n+1})}\leq C_p\|f\|_{L^p({\rm A}(1))}.
		\eeqq
	\end{theorem}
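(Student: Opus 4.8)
The goal is to prove the restriction estimate in Theorem~\ref{restriction} for the general cone $(\xi,\phi(\xi))$ with $\phi\in\mathbf\Phi$, using the machinery already developed. Since $\mathbf\Phi(R)\subset\mathbf\Phi$ and the reductions of Section~\ref{section-2} convert any $\phi\in\mathbf\Phi$ satisfying ${\bf H_1}$, ${\bf H_2}$ into a phase function of the class $\mathbf\Phi(R)$ after a (bounded) change of variables, it suffices to establish the bound $\|Ef\|_{L^p(B_R^{n+1})}\lesssim_\varepsilon R^\varepsilon\|f\|_{L^p({\rm A}(1))}$ for all $R\gg 1$, for phase functions in the small class $\mathbf\Phi(R)$, and then remove the $R^\varepsilon$ by the usual $\varepsilon$-removal argument (Tao's lemma, or the argument in \cite{OW}). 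The heart of the matter is therefore a local-to-global $L^p$ estimate with an $R^\varepsilon$ loss, which we obtain by the Bourgain--Guth broad--narrow method exactly as in \cite{OW}, with \emph{our} $k$-broad estimate (Theorem~\ref{Theorem-5}) and \emph{our} narrow decoupling theorem (Theorem~\ref{narrow}) replacing their circular-cone analogues.

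\emph{Step 1: broad--narrow decomposition and induction on scale.} Fix $p$ in the claimed range, and set $R\gg 1$. Partition ${\rm A}(1)$ into $K^{-1}$-caps $\tau$. At each ball $B_{K^2}^{n+1}\subset B_R^{n+1}$, either $e^{it\phi_R(D)}f$ is controlled by the $k$-broad part (which, by Theorem~\ref{Theorem-5} combined with Corollary~\ref{cor2} or rather the $L^p$ version obtained by interpolation with the trivial $L^2$ bound, is bounded by $R^{\varepsilon}\|f\|_{L^p}$ with the sharp power, for $p\ge 2\frac{n+k+1}{n+k-1}$), or there is a $(k-1)$-plane $V$ such that the contribution comes from the $O(\max\{1,K^{k-3}\})$ caps whose normals lie near $V$. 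For the narrow part we apply Theorem~\ref{narrow} on $B_{K^2}^{n+1}$ (valid in the range $2\le p\le\frac{2(k-1)}{k-3}$), followed by the parabolic rescaling Lemma~\ref{pro2} to pass to scale $R/K^2$, and then invoke the induction hypothesis. Choosing $k$ as in the proof of Theorem~\ref{theoa2} — roughly $k\sim n/2$ — one checks that the two constraints $p\ge 2\frac{n+k+1}{n+k-1}$ and $p\le\frac{2(k-1)}{k-3}$, together with the gain needed to close the induction on $R$ (an exponent condition $e(p,k,n)\ge 0$), are simultaneously satisfiable precisely when $p$ exceeds the threshold stated in Theorem~\ref{restriction}. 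This is the same bookkeeping as in Section~\ref{section-7} and in \cite[\S7]{OW}, with the only changes being (i) the $L^p$ right-hand side here is replaced by an $L^p$ right-hand side there (but the argument of \cite{OW} is already $L^p$-based for the restriction problem, so this is in fact cleaner), and (ii) one no longer has the trivial $L^2$-energy endpoint, which is why the numerology shifts slightly from local smoothing to restriction.

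\emph{Step 2: $\varepsilon$-removal.} The output of Step~1 is $\|Ef\|_{L^p(B_R^{n+1})}\le C_\varepsilon R^\varepsilon\|f\|_{L^p({\rm A}(1))}$ for every $\varepsilon>0$ and every $R$. Since $p$ lies strictly above the threshold, one has room to spare; a standard $\varepsilon$-removal lemma (e.g.\ the one used in \cite{OW}, ultimately going back to Tao) upgrades this to the global bound $\|Ef\|_{L^p(\R^{n+1})}\le C_p\|f\|_{L^p({\rm A}(1))}$ with no loss. This step is routine and essentially identical to the corresponding step in \cite{OW}; the only input it requires is that the $L^p$ estimate holds for all open $p$ above the threshold with polynomial-in-$R$ constants, which Step~1 provides.

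\emph{Main obstacle.} The genuinely new content is already packaged in Sections~\ref{section-4}--\ref{section-5} and \ref{section-7}: the $k$-broad estimate for general positively-curved cones (Theorem~\ref{Theorem-5}), the transverse-equidistribution geometry of Section~\ref{section-4} that survives the loss of circular symmetry, the parabolic rescaling Lemma~\ref{pro2} for the $R$-dependent class $\mathbf\Phi(R)$, and the narrow decoupling Theorem~\ref{narrow}. Given all of these, the proof of Theorem~\ref{restriction} is a matter of assembling the broad--narrow induction and carrying out the $\varepsilon$-removal; I expect no serious difficulty beyond verifying that the numerical constraints close for the stated exponents, which mirrors the computation already done for Theorem~\ref{theoa2}. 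If there is a subtle point, it is making sure that each parabolic rescaling step keeps the phase function inside the correct class $\mathbf\Phi(\widetilde R)$ associated to the new scale $\widetilde R=R/K^2$ — but this is exactly what Lemma~\ref{pro2} has been designed to guarantee, so invoking it suffices.
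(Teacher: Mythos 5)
Your proposal is correct and follows essentially the same route as the paper, whose entire proof of Theorem \ref{restriction} is the closing remark that, after reducing to the class $\mathbf\Phi(R)$, one runs the Ou--Wang broad--narrow/induction-on-scales argument (with the standard $\varepsilon$-removal to pass from local estimates with $R^\varepsilon$ loss to the global bound), substituting the paper's $k$-broad estimate (Theorem \ref{Theorem-5}), narrow decoupling (Theorem \ref{narrow}) and parabolic rescaling (Lemma \ref{pro2}) for the circular-cone inputs. The only small imprecision is your suggestion to take $k$ ``as in the proof of Theorem \ref{theoa2}'': the restriction bookkeeping balances $2\frac{n+k+1}{n+k-1}$ against a slightly different narrow/rescaling condition than the one in Proposition \ref{proposition-6}, so the optimal choice is roughly $k=\lceil (n+2)/2\rceil$ rather than $(n+4)/2$ or $(n+5)/2$; since you explicitly defer this numerology to the Ou--Wang computation, this does not affect the correctness of the argument.
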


	\bibliographystyle{amsplain}

\begin{thebibliography}{99}
		
		\bibitem{BHS} D. Beltran, J. Hickman, and C. Sogge.
		\newblock Variable coefficient Wolff-type inequalities and sharp local smoothing estimates for wave equations on manifolds. \newblock Anal. PDE 13 (2020), no. 2, 403–433.
		
		
		
		\bibitem{BelHicSog18P-surv} D. Beltran, J. Hickman, and C. Sogge.
		\newblock  Sharp local smoothing estimates for Fourier integral operators. 
		\newblock arXiv preprint arXiv:1812.11616.
		
		\bibitem{BS} D. Beltran and O. Saari.
		\newblock Improved $L^p-L^q$ local smoothing estimates for the wave equation via k-broad Fourier restriction.
		\newblock  arXiv:2203.07923.
		
		\bibitem{BCT06}
		J. Bennett, A.Carbery, and T. Tao.
		\newblock On the multilinear restriction and {K}akeya conjectures.
		\newblock {\em Acta Math.}, 196(2):261--302, 2006.
		\bibitem{BoDe2015}
		J. Bourgain and C. Demeter.
		\newblock The proof of the {$l^2$} decoupling conjecture.
		\newblock {\em Ann. of Math. (2)}, 182(1):351--389, 2015.
		\bibitem{GLMX}
		C. Gao, B. Liu, C. Miao, Y. Xi. \newblock Square function estimates and Local smoothing for Fourier Integral Operators, \newblock arXiv:2010.14390. 	\bibitem{GMZ}
		C. Gao, C. Miao, J. Zheng. \newblock Improved local smoothing estimate for the fractional Schr\"odinger operator, \newblock {\em Bulletin of the London Mathematical Society}, 54(1), 54-70.
		
		\bibitem{GaSchSe}
		G. Garrig\'{o}s, W. Schlag and A. Seeger.
		\newblock Improvements in wolff's inequality for decompositions of cone
		multipliers.
		\newblock Preprint.
		
		\bibitem{GaSe09}
		G. Garrig\'{o}s and A. Seeger.
		\newblock On plate decompositions of cone multipliers.
		\newblock {\em Proc. Edinb. Math. Soc. (2)}, 52(3):631--651, 2009.
		
		\bibitem{Guth}
		L. Guth. \newblock Restriction estimates using polynomial partitioning II. Acta Mathematica, 2018, 221(1): 81-142.
		\bibitem{GHI}
		L. Guth, J. Hickman and Iliopoulou. \newblock Sharp estimates for oscillatory integral operators via polynomial partitioning.  Acta Mathematica,223(2019),251-376.
		
		\bibitem{GWZ}
		L. Guth, H. Wang and R. Zhang. \newblock A sharp square function estimate for the cone in $\R^3$. Annals of Mathematics 192.2 (2020): 551-581.
		\bibitem{Ha}
		T. Harris. \newblock Improved decay of conical averages of the Fourier transform. Proceedings of the American Mathematical Society, 147(11), 4781-4796.
		\bibitem{HNS}
		Y. Heo, F. Nazarov and A.Seeger. \newblock Radial Fourier multipliers in high dimensions. Acta mathematica, 2011, 206(1): 55-92.
		
		\bibitem{LaWo02}
		I. Laba and T. Wolff.
		\newblock A local smoothing estimate in higher dimensions.
		\newblock {\em J. Anal. Math.}, 88:149--171, 2002.
		\newblock Dedicated to the memory of Tom Wolff.
		\bibitem{LeeJ}
		J. Lee. \newblock A trilinear approach to square function and local smoothing estimates for the wave operator. arXiv preprint arXiv:1607.08426, 2016.
		\bibitem{LV}
		S. Lee and A. Vargas. On the cone multiplier in $\R^3$. J. Funct. Anal, 2012, 263(4): 925-940.
		\bibitem{Miya2}
		A. Miyachi. \newblock  On some singular Fourier multipliers for $H^p$ ($\R^n$). Journal of the Faculty of Science the University of Tokyo.sect A Mathematics (1981).
		
		\bibitem{MiSo}
		W. Minicozzi, and C. Sogge. \newblock Negative results for Nikodym maximal functions and related oscillatory integrals in curved space. Mathematical Research Letters 4.2-3(1997):221-237.
		\bibitem{Miya}
		A. Miyachi. \newblock On some estimates for the wave equation in $L^p$ and $H^p$.
		\newblock \emph{Journal of the Faculty of Science the University of Tokyo.sect A Mathematics}, 1980, 27(2):331-354.
		\bibitem{MSS}
		G. Mockenhaupt, A. Seeger, C.Sogge. \newblock Wave front sets, local smoothing and Bourgain's circular maximal theorem. Annals of mathematics, 1992, 136(1): 207-218.
		\bibitem{OW}Y. Ou and H. Wang. \newblock A cone restriction estimate using polynomial partitioning.  ArXiv:1704.05485 (2017).J. Eur. Math. Soc., to appear \bibitem{PS}
		M. Pramanik, A. Seeger. \newblock  $L^p$ regularity of averages over curves and bounds for associated maximal operators. American journal of mathematics, 129(1), 61-103.
		
		\bibitem{Peral}
		J. Peral. \newblock $L^p$ estimates for the wave equation.
		\newblock \emph{ Journal of Functional analysis}, 1980, 36(1): 114-145.
		\bibitem{Sogge91}
		C. Sogge. \newblock Propagation of singularities and maximal functions in the plane.
		\newblock {\em Invent. Math.}, 104(2):349--376, 1991.
		\bibitem{Tao}
		T. Tao. \newblock The Bochner-Riesz conjecture implies the restriction conjecture. Duke Mathematical Journal, 1999, 96(2):363-375.
		
		\bibitem{Tao-Vargas-II}
		T. Tao and A. Vargas.
		\newblock A bilinear approach to cone multipliers. {II}. Applications.
		\newblock {\em Geom. Funct. Anal.}, 10(1):216--258, 2000.
		
		\bibitem{wisewell}
		L. Wisewell. \newblock Kakeya sets of curves. Geom. Funct. Anal., 15 (2005), 1319-1362.
		\bibitem{Wolff00}T.~Wolff.
		\newblock Local smoothing type estimates on $L^p$ for large $p$.
		\newblock {\em Geom. Funct. Anal.}, 10(5):1237--1288, 2000.
		
	\end{thebibliography}

\end{document}